\documentclass[12pt]{amsart}

\usepackage{fullpage}
\usepackage{mathtools}
\usepackage{amssymb,amsmath,amsthm,amscd,mathrsfs,graphicx}
\usepackage[dvipsnames]{xcolor}
\usepackage{bm}
\usepackage{dsfont}
\usepackage{enumerate}
\usepackage{epsfig}
\usepackage{float}
\usepackage{latexsym, amsxtra}
\usepackage{pdfpages}
\usepackage{multicol}
\usepackage[normalem]{ulem}
\usepackage{psfrag}
\usepackage{stmaryrd}
\usepackage{tikz}
\usepackage[T1]{fontenc}
\usepackage{url}
\usepackage{verbatim}
\usepackage{xy}
\usepackage{indentfirst}
\usepackage{tikz-cd}
\tikzset{%
    symbol/.style={%
        ,draw=none
        ,every to/.append style={%
            edge node={node [sloped, allow upside down, auto=false]{$#1$}}}
    }
}
\usepackage{longtable}

\usepackage[
  colorlinks=true,
  linkcolor=red,
  urlcolor=blue,
  citecolor=blue
]{hyperref}

\usepackage{multirow}
\usepackage{algorithm}
\usepackage{algpseudocode}

\makeatletter
\def\thm@space@setup{%
  \thm@preskip=2ex \thm@postskip=2ex
}
\makeatother

\makeatletter

\newcommand{\Rmnum}[1]{\expandafter\@slowromancap\romannumeral #1@}
\makeatother

\numberwithin{equation}{section}
\theoremstyle{plain}

\newtheorem{thm}{Theorem~}[section]
\newtheorem{lem}[thm]{Lemma~}

\newtheorem{prop}[thm]{Proposition~}

\newtheorem{cor}[thm]{Corollary~}

\newtheorem{prop-def}[thm]{Proposition-Definition~}

\theoremstyle{remark}
\newtheorem{rmk}[thm]{Remark~}

\theoremstyle{definition}
\newtheorem{defn}[thm]{Definition~}

\newcommand{\calM}{\mathcal{M}}

\newcommand{\calH}{\mathcal{H}}
\newcommand{\calP}{\mathcal{P}}
\newcommand{\calC}{\mathcal{C}}
\newcommand{\calF}{\mathcal{F}}
\newcommand{\calL}{\mathcal{L}}

\newcommand{\CC}{\mathbb{C}}
\newcommand{\ZZ}{\mathbb{Z}}
\newcommand{\RR}{\mathbb{R}}

\newcommand{\PP}{\mathbb{P}}
\newcommand{\FF}{\mathbb{F}}
\newcommand{\QQ}{\mathbb{Q}}

\newcommand{\DD}{\mathbb{D}}

\newcommand\PGL{\mathrm{PGL}}

\newcommand\PSL{\mathrm{PSL}}

\newcommand\id{\mathrm{id}}

\newcommand\disc{\mathrm{disc}}
\newcommand\rank{\mathrm{rank}}

\newcommand\SL{\mathrm{SL}}

\newcommand\GL{\mathrm{GL}}

\newcommand\ord{\mathrm{ord}}

\newcommand{\Aut}{\mathrm{Aut}}

\newcommand{\bs}{\backslash}
\newcommand{\dbs}{\bs\hspace{-0.5mm}\bs}

\title{Automorphism Groups of Smooth Cubic Fourfolds through Lattice Theory}

 \author[J. Fu]{Jie Fu}
\address{Tsinghua University, China}
\email{fu-j21@mails.tsinghua.edu.cn}

 \author[Z. Zheng]{Zhiwei Zheng}
\address{Tsinghua University, China}
\email{zhengzhiwei@mail.tsinghua.edu.cn}

\date{}

\begin{document}
\bibliographystyle{amsalpha}

\begin{abstract}
Laza and the second author have classified all possible symplectic automorphism groups of smooth cubic fourfolds. There are $34$ such groups, and by Koike, there are $48$ families of cubic fourfolds with speficied symplectic automorphism group. For $42$ among those families, we complete the classification of all possible automorphism groups. The approach of this paper is mainly lattice theoretic, starting from known results about the period map of cubic fourfolds by Voisin, Hassett, Looijenga and Laza. We use a lattice-enumeration algorithm implemented in OSCAR.
\end{abstract}

\maketitle

 \setcounter{tocdepth}{1}
	\tableofcontents

\emph{Notation}:
\begin{enumerate}
\item For groups $G_1$ and $G_2$, we write $G_1<G_2$ if $G_1$ is a subgroup of $G_2$.
\item If $G_{1}<G_{2}$, we denote the centralizer and normalizer of $G_{1}$ in $G_{2}$ by $C_{G_{2}}(G_{1})$ and $N_{G_{2}}(G_{1})$, respectively.
\item We write $G_1:G_2$ for a semidirect product with normal subgroup $G_1$ and quotient $G_2$. We write $G_{1}.G_{2}$ for an extension that fits into an exact sequence
\begin{align*}
    1\longrightarrow G_{1}\longrightarrow G_{1}.G_{2}\longrightarrow G_{2}\longrightarrow 1.
\end{align*}
\item We use $L_2(q)$ to denote $\PSL_2(\FF_q)$.
\item For a positive integer $n$, we denote by $\mu_{n}$ the group of $n$'th roots of unity in $\CC^{\times}$.
\item We use $3^{1+4}$ to denote $3^{1+4}_+$, the extraspecial $3$-group of order $243$ and exponent $3$.
\item For $g\in \GL(n,\CC)$, we write $\overline{g}$ for its image under the natural projection $\GL(n,\CC)\rightarrow \PGL(n,\CC)$.
\item For a lattice $L$ and a field $\FF\in\{\QQ,\RR,\CC\}$, we write $L_\FF=L\otimes_\ZZ\FF$.
\item We use $A_n, D_n,E_n$ to denote negative root lattice of corresponding Dynkin diagram.
\end{enumerate}

\section{Introduction}

For a smooth cubic fourfold $X\subset \PP^5=\PP^5_\CC$, a basic problem is to determine its automorphism group $\Aut(X)$. Its symplectic subgroup $\Aut^s(X)$ consists of the automorphisms acting trivially on $H^{3,1}(X)$. Laza and Zheng classified all possible symplectic automorphism groups using the global Torelli theorem, the image of period map, and lattice theory \cite{laza2022automorphisms}. Koike later described the corresponding projective representations and invariant cubic forms \cite{KOIKE202512,KOIKE2026}. Yang--Yu--Zhu classified the finite groups that admit faithful actions on smooth cubic fourfolds \cite{yang2024automorphism}.

This paper continues \cite{FWZ26}. The previous paper gives general restrictions on non-symplectic automorphisms, determines all rank-$19$ cases, and treats some additional families. Here we give a uniform lattice enumeration for the connected symplectic families with $15\leq\rank(S)\leq19$. The rank-$20$ families are zero-dimensional and were already determined in \cite[Theorem~1.8]{laza2022automorphisms}; see also \cite{KOIKE202512,KOIKE2026}. We do not obtain new results about the classification problem for $\rank(S)<15$, or equivalently $\Aut^s(X)\cong C_1, C_2, C_3, C_2^2, C_4$, or $S_3$.

For a smooth cubic fourfold $X$, its \emph{non-symplectic index} is
\[
    m(X)=[\Aut(X):\Aut^s(X)].
\]
The action on the one-dimensional space $H^{3,1}(X)$ gives an exact sequence
\[
1\longrightarrow\Aut^s(X)\longrightarrow\Aut(X)
\longrightarrow\mu_{m(X)}\longrightarrow1.
\]
Thus the index determines the cyclic quotient, but it does not by itself determine the abstract group $\Aut(X)$. 

Let $\Lambda_0$ be the primitive middle integral cohomology of $X$. Let $G=\Aut^s(X)$. Write
\[
    S=S_G(X),\qquad T=\Lambda_0^G
\]
for the coinvariant and invariant lattices in $\Lambda_0$. Based on \cite{hohn2016290} \cite{laza2022automorphisms} \cite{FWZ26}, one can enumerate all possible pairs $(S,T)$ arising from smooth cubic fourfolds. Note that for a given abstract pair $(S,T)$, different embeddings of $S\oplus T$ into $\Lambda_0$ may correspond to different families of cubic fourfolds.

The main result of this paper is the following:

\begin{thm}\label{thm:main}
Let $X$ be a smooth cubic fourfold, let $G=\Aut^s(X)$, and let $S=S_G(X)$ be its coinvariant lattice. If $\rank(S)\geq15$, then all possible groups $\Aut(X)$ and indices $m(X)$ are classified in Table~\ref{table: main}. 
\end{thm}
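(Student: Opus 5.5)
The approach is to reduce the classification of $\Aut(X)$ to a lattice-theoretic enumeration on the invariant lattice $T=\Lambda_0^G$, via the global Torelli theorem and the description of the image of the period map (Voisin, Hassett, Looijenga, Laza). The key observation is that $\Aut(X)$ is identified with the group of Hodge isometries of $\Lambda_0$ (preserving the square of the hyperplane class, up to the usual sign) that fix the period. Since $G=\Aut^s(X)$ is normal in $\Aut(X)$, every element of $\Aut(X)$ preserves both $S$ and $T$. The quotient $\mu_{m(X)}$ then acts faithfully on $T$ through a cyclic group $\langle\sigma\rangle$ of isometries of $T$. The isometry $\sigma$ acts on the period line in $T_\CC$ by a primitive $m$-th root of unity, and on $T^\perp$ it induces an isometry of $S$ normalizing $G$, compatible with the gluing along the discriminant groups.

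\textbf{Step 1: the finite list of families.} Using \cite{hohn2016290}, \cite{laza2022automorphisms} and \cite{FWZ26}, list all pairs $(S,T)$ with $\rank(S)\geq 15$ that arise from smooth cubic fourfolds, each with its embedding $S\oplus T\subset\Lambda_0$. Discard the rank-$20$ and rank-$19$ cases, which are already done in \cite{laza2022automorphisms} and \cite{FWZ26}. For each remaining family, compute the image $\overline{O(S)}\subset O(A_S)$ and the subgroup $H\subset O(A_S)$ induced by $N_{O(S)}(G)$. Gluing with $T$ transports $H$ to a subgroup of $O(A_T)$.

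\textbf{Step 2: candidate non-symplectic isometries.} For each $m$ with $\varphi(m)\le\rank(T)$ (recall $\rank T\le 7$ here), enumerate the conjugacy classes of isometries $\sigma\in O(T)$ of order $m$ whose eigenspaces are compatible with the signature. Concretely, $\sigma$ must have a primitive $m$-th root eigenvalue with a positive-definite $(1,0)$-type eigenvector. We also need:
\begin{itemize}
\item the induced action $\bar\sigma\in O(A_T)$ to lie in the image of $H$, so that $\sigma$ extends to $\Lambda_0$ together with some element of $N(G)$;
\item the generic period in the $\zeta_m$-eigenspace to avoid the $\mathcal{C}_2$ and $\mathcal{C}_6$ divisors.
\end{itemize}
The last condition is checked by verifying that no vector of norm $2$ or $6$ of the relevant divisibility in $\Lambda_0$ becomes orthogonal to the eigenspace. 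This is exactly where the OSCAR lattice-enumeration algorithm is used, via the characteristic-polynomial and genus enumeration of isometries of even lattices of small rank. The maximal $m(X)$ for generic members of each eigen-subfamily determines the possible indices.

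\textbf{Step 3: group structure.} For each admissible $\sigma$, lift it to $\tilde\sigma\in\Aut(X)$. Determine the extension $G.\mu_m$ by computing the conjugation action of $\tilde\sigma$ on $G$ through its image in $O(S)$, and the order of $\tilde\sigma$ restricted to $S\oplus T$. This decides whether the extension splits, and gives the isomorphism type recorded in Table~\ref{table: main}, cross-checked against Koike's explicit cubic forms. Existence of each entry follows from the surjectivity of the period map onto the complement of $\mathcal{C}_2\cup\mathcal{C}_6$.

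\textbf{Main obstacle.} I expect the hardest part to be Step 2's compatibility condition. Deciding whether $\bar\sigma$ glues with some element of $N_{O(S)}(G)$, rather than merely with some element of $O(S)$, requires explicit computation of normalizers in large groups $O(S)$ for rank-$15$ to $18$ lattices. The ambiguity from different embeddings of $S\oplus T$ must also be handled family by family. The 6 families left out of the 48 are presumably those where this computation could not be completed.
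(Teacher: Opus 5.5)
Your route matches the paper's in outline: enumerate finite-order $\sigma=f_T\in O(T)$ satisfying the signature condition, glue equivariantly into $\Lambda_0$, exclude roots, realize via Torelli and the image of the period map, and identify $\langle G,\tilde\sigma\rangle$. But there is a genuine gap on the soundness side: you never check that a very general member of the $\zeta_m$-eigenlocus of $\sigma$ still has symplectic group $G$. Put $P=\ker\Phi_m(\sigma)\subseteq T$ and $K=P^\perp_{\Lambda_0}\supseteq S$. Every element of $\widetilde O(K)$, extended by the identity on $P$, lies in $\widetilde O(\Lambda_0)$ and fixes every period on the locus, so it is a symplectic automorphism of every member. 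Conversely, at a very general period the smallest rational subspace containing the period line is $P_\QQ$, so the generic symplectic group of the locus is exactly $\widetilde O(K)$. If $\widetilde O(K)\supsetneq\widetilde O(S)$, which the root test does not rule out, every cubic on the locus has a strictly larger symplectic group and lies in a different symplectic family. Then $\langle G,\tilde\sigma\rangle$ is not $\Aut(X)$ for any $X$ with $\Aut^s(X)=G$. Surjectivity of the period map only gives $\Aut(X)\supseteq\langle G,\tilde\sigma\rangle$, so your Step~3 would record spurious rows. The paper adds the test $|\widetilde O(K)|=|\widetilde O(S)|$ (condition (4) of property $\clubsuit$). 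This test costs nothing for completeness. If $\Aut^s(X)=G$ and $f$ generates $\Aut(X)/G$, then $X$ lies on the locus of $f|_T$, which forces $|\widetilde O(K)|=|G|$, and $\Aut(X)=\langle G,f\rangle$ by comparing orders.

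The non-symplectic saturation is also only asserted: the phrase ``the maximal $m(X)$ for generic members of each eigen-subfamily'' needs a criterion. The paper observes that a proper saturation of an order-$m$ locus has the same dimension and index a proper multiple of $m$ in the same symplectic family, so it appears among the outputs. Where such a pair actually occurs ($F_{21}$, with outputs $(m,d)=(3,0)$ and $(6,0)$), the paper proves the two loci coincide, which is why index $3$ is absent for $F_{21}$. Next, ``up to the usual sign'' must be made precise. The Hodge isometries of $\Lambda_0$ form $\Aut(X)\times\{\pm\id\}$, and the automorphisms are exactly those lying in $\widetilde O(\Lambda_0)$. When $q_T\cong3^{-1}\oplus(-q_S)$, the residual $\ZZ/3\ZZ\cong A_{\Lambda_0}$ sits inside $A_T$, so $\sigma$ itself must act trivially on it; compatibility with $O(S)$ does not detect this. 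In the other gluing type the sign can be repaired on the $S$ side. Testing extendability of a single representative of each $O(T)$-conjugacy class against a fixed gluing is also representative-dependent in general, so you must enumerate the equivariant gluings for each class, as the paper does. Finally, the obstacle you anticipate is not one: $G=\widetilde O(S)$ is the kernel of $O(S)\to O(q_S)$, so $N_{O(S)}(G)=O(S)$. The six omitted families are simply those with $\rank(S)<15$, where enumerating isometries of the larger $T$ becomes too expensive.
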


This theorem is proved through a lattice-theoretic argument and the main calculations are carried out in OSCAR \cite{OSCAR}, an algebra-computation software. We are also inspired by the work \cite{Brandhorst_Hofmann_2023}, which gives the classification of finite groups acting on complex K3 surfaces. For each lattice pair $(S,T)$, we enumerate finite-order isometries $f_T\in O(T)$ and their equivariant extensions to $\Lambda_0$. We then apply the signature, root, and discriminant form tests. If $f_T$ survives after those tests, then it arises from geometry of smooth cubic fourfolds. If $f_{\Lambda_0}\in\widetilde O(\Lambda_0)$ is the resulting isometry and $f_S=f_{\Lambda_0}|_S$, the algorithm computes
\[
\langle\widetilde O(S),f_S\rangle<O(S),
\]
which turns out to be an automorphism group of a smooth cubic fourfold.
%By \cite[Lemma~6.4 and Proposition~6.5]{laza2022automorphisms}, the action of $\Aut(X)$ on $S$ is faithful when $\rank(S)\ge 13$. Thus there is a family of cubic fourfolds with generic automorphism group $A_S(f)$. 

The paper is organized as follows. Section~2 reviews the Hodge-theoretic and lattice-theoretic background. Section~3 discusses moduli spaces with specified group action. Section~4 gives lattice criteria underlying the algorithms. Section~5 describes the enumeration algorithms. Section~6 presents the results and proves Theorem~\ref{thm:main}.

\textbf{Acknowledgements:}
We thank Xun Yu for his interests and helpful suggestions. We thank Shihao Wang for related discussions. We thank Stevell Muller for help with issues about OSCAR system.
The mathematical contents and algorithms are carried out by human. We used AI to assist with coding and manuscript polishing. We have reviewed all AI-assisted codes and take full responsibility for the content of the paper and the accompanying computational files.

\section{Preliminaries}

\subsection{Hodge Structures and the Global Torelli Theorem}

Let $X\subset\PP^5$ be a smooth cubic fourfold. We denote its automorphism group by $\Aut(X)$. The subgroup of automorphisms acting trivially on $H^{3,1}(X,\CC)$ is denoted by $\Aut^s(X)$. By Matsumura--Monsky \cite{Matsumura1963OnTA}, every automorphism of a smooth cubic fourfold is linear, and $\Aut(X)$ is finite.

The lattice $\Lambda:=H^4(X,\ZZ)\cong I_{21,2}$ is odd and unimodular. Let $h\in H^2(X,\ZZ)$ be the hyperplane class, and let $\Lambda_0$ be the orthogonal complement of $h^2$ in $\Lambda$. Then
\[
\langle h^2\rangle\cong\langle3\rangle,
\qquad
\Lambda_0\cong E_8^{\oplus2}\oplus U^{\oplus2}\oplus A_2.
\]
The Hodge numbers are $h^{4,0}=h^{0,4}=0$, $h^{3,1}=h^{1,3}=1$, and $h^{2,2}=21$.

The discriminant group is $A_{\Lambda_0}\cong \ZZ/3\ZZ$. The type IV domain associated with $\Lambda_0$ is
\[
\DD=\PP\{x\in\Lambda_{0,\CC}\mid \langle x,x\rangle=0,\ \langle x,\overline{x}\rangle<0\}^\circ,
\]
which means one connected component of the space of isotropic lines satisfying the indicated positivity condition. The other component is $\overline{\DD}$. Complex conjugation exchanges the two components.

We use the following subgroups of $O(\Lambda_0)$:
\begin{itemize}
    \item $O(\Lambda_0)$ is the full isometry group of $\Lambda_0$;
    \item $\widetilde{O}(\Lambda_0)$ is the subgroup acting trivially on $A_{\Lambda_0}$;
    \item $O^+(\Lambda_0)$ is the subgroup preserving the component $\DD$;
    \item $\Gamma=O^*(\Lambda_0)=\widetilde{O}(\Lambda_0)\cap O^+(\Lambda_0)$.
\end{itemize}
We have inclusions
\[
O^*(\Lambda_0)<\widetilde{O}(\Lambda_0)<O(\Lambda_0),
\qquad
O^*(\Lambda_0)<O^+(\Lambda_0)<O(\Lambda_0),
\]
and each consecutive inclusion has index $2$.

A vector $r\in\Lambda_0$ is a \emph{short root} if $\langle r,r\rangle=2$. It is a \emph{long root} if $\langle r,r\rangle=6$ and $\langle r,\Lambda_0\rangle=3\ZZ$. These roots define the $\Gamma$-invariant hyperplane arrangements
\begin{align*}
\calH_6 &=\{[x]\in \DD\sqcup\overline{\DD}\mid x\perp r\text{ for some short root }r\},\\
\calH_2 &=\{[x]\in \DD\sqcup\overline{\DD}\mid x\perp r\text{ for some long root }r\}.
\end{align*}

Let $\calM$ be the moduli space of smooth cubic fourfolds. The period line $H^{3,1}(X)\subset\Lambda_{0,\CC}$ defines the period map $\calM\to\DD/\Gamma$. The global Torelli theorem was proved in \cite{voisin1986torelli,voisin2008erratum} and \cite{looi2009period}. We use the following form (see also \cite[Proposition~2.1]{zheng2021orbifold} and \cite[Proposition~2.4]{laza2022automorphisms} for related discussion):

\begin{thm}
\label{theorem: global torelli}
Let $X$ and $X^{\prime}$ be smooth cubic fourfolds. Any Hodge isometry $H^4(X^{\prime},\ZZ)\xrightarrow{\sim} H^4(X,\ZZ)$ preserving the square of the hyperplane class must be induced by a unique isomorphism $X\xrightarrow{\sim} X^{\prime}$.
\end{thm}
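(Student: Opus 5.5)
The statement combines Voisin's Torelli theorem (injectivity of the period map, with the Looijenga/Laza description of its image) with the strong form that every polarized Hodge isometry is geometric. I would derive it from these, together with the Matsumura--Monsky finiteness and linearity of automorphisms.

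\emph{Existence.} Let $\phi\colon H^4(X',\ZZ)\to H^4(X,\ZZ)$ be a Hodge isometry with $\phi(h'^2)=h^2$. Then $\phi$ restricts to an isometry of the primitive lattices. After fixing markings, the period points of $X$ and $X'$ lie in the same $O(\Lambda_0)$-orbit. Since $\phi$ extends to the unimodular lattice $\Lambda$ and fixes $h^2$, its restriction acts on $A_{\Lambda_0}\cong A_{\langle h^2\rangle}$ through the gluing, hence trivially. So $\phi|_{\Lambda_0}\in\widetilde O(\Lambda_0)$.

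The composition with $-\id$ is not available here, because $-\id$ does not fix $h^2$. So I must handle $O^+$ versus its complement. Replacing $\phi$ by $\phi$ composed with complex conjugation on the period is not allowed either. Instead, I would observe that a Hodge isometry sends $H^{3,1}(X')$ to $H^{3,1}(X)$. It therefore sends the positive-definite real $2$-plane spanned by the real and imaginary parts of the period to the corresponding plane, respecting the orientation induced by the complex structure. This forces $\phi|_{\Lambda_0}\in O^+$, hence into $\Gamma$, so the periods agree in $\DD/\Gamma$.

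Voisin's theorem (injectivity of $\calM\to\DD/\Gamma$) then gives an isomorphism $f\colon X\to X'$. The induced $f^*$ differs from $\phi$ by an element $\psi=\phi\circ (f^*)^{-1}$ of the Hodge-isometry group of $X$ fixing $h^2$.

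\emph{Key step (main obstacle).} The difficulty is showing that every such $\psi$ is induced by an automorphism of $X$. This is the strong Torelli statement and is where Voisin's erratum is needed. I would argue via the local structure of the period map. $\calM$ is locally the quotient of the Kuranishi space by $\Aut(X)$, and the local period map from Kuranishi space to $\DD$ is an open immersion by infinitesimal Torelli. The period map $\calM\to\DD/\Gamma$ is an open immersion of orbifolds, i.e. an isomorphism onto $(\DD\setminus\calH_6)/\Gamma$ by Looijenga and Laza. Comparing stabilizers at the point $[X]$ then gives $\Aut(X)\to\mathrm{Stab}_\Gamma(\omega_X)$ as an isomorphism modulo the subgroup acting trivially on $\DD$. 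That subgroup is $\{\pm\id\}\cap\Gamma$, and $-\id\notin\widetilde O(\Lambda_0)$ because $A_{\Lambda_0}=\ZZ/3$. Hence $\psi=g^*$ for some $g\in\Aut(X)$, and $\phi=(g\circ f)^*$ up to the ordering convention. Justifying the orbifold-level statement honestly means invoking \cite[Proposition~2.1]{zheng2021orbifold}, and I would cite it there.

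\emph{Uniqueness.} If two isomorphisms induce the same map on $H^4$, their difference is an automorphism $g$ acting trivially on $H^4(X,\ZZ)$. By the Matsumura--Monsky result, $g$ is linear of finite order. A nontrivial linear automorphism acting trivially on $H^4$ would act trivially on $H^{3,1}\cong$ the appropriate Jacobian-ring graded piece, namely cubic forms modulo the Jacobian ideal twisted. I would check, by diagonalizing $g$ and using the Griffiths residue description of $H^{2,2}_{\mathrm{prim}}$ and $H^{3,1}$, that triviality on all of $H^4$ forces $g$ to be scalar, i.e. $g=\id$ in $\PGL(6,\CC)$. This is the standard faithfulness argument.
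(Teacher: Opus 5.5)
Your outline is correct and agrees with the paper. The paper gives no independent proof: it takes this strong form from Voisin's theorem (with erratum) and Looijenga--Laza, and refers to \cite[Proposition~2.1]{zheng2021orbifold} and \cite[Proposition~2.4]{laza2022automorphisms} for the orbifold-level statement and the comparison $\Aut(X)\cong\mathrm{Stab}_\Gamma(\omega_X)$ that you also cite in your key step.

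There are three minor slips, none of which affects the argument. With the paper's convention $\langle x,\overline{x}\rangle<0$ on $\DD$, the period plane is negative definite, not positive definite. The image of the period map is $(\DD\setminus(\calH_2\cup\calH_6))/\Gamma$ rather than $(\DD\setminus\calH_6)/\Gamma$. Finally, the inducing isomorphism is $f\circ g$, since $(f\circ g)^*=g^*\circ f^*=\phi$.
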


\begin{cor}
    For every smooth cubic fourfold $X$, there is a canonical isomorphism
    \[
    \Aut(X)\cong O_{HS}(\Lambda, h^2),
    \]
    where the group on the right consists of the automorphisms of $\Lambda$ that preserve the Hodge decomposition and $h^2$.
\end{cor}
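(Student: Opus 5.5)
The plan is to define the natural homomorphism $\rho\colon \Aut(X)\to O_{HS}(\Lambda,h^2)$, $g\mapsto (g^{*})^{-1}$ (or $g\mapsto g^*$ composed with inversion to get a covariant map), and show it is bijective. Any automorphism $g$ of $X$ is linear by Matsumura--Monsky, so $g^*$ fixes the hyperplane class $h$ and hence $h^2$. Since $g$ is holomorphic, $g^*$ preserves the Hodge decomposition of $H^4(X,\CC)$. It is an isometry of the cup-product lattice $\Lambda=H^4(X,\ZZ)$. Hence $\rho$ lands in $O_{HS}(\Lambda,h^2)$, and it is a group homomorphism by functoriality of pullback.

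Surjectivity comes directly from Theorem~\ref{theorem: global torelli} applied with $X'=X$. Any $\phi\in O_{HS}(\Lambda,h^2)$ is a Hodge isometry $H^4(X,\ZZ)\to H^4(X,\ZZ)$ preserving $h^2$. It is therefore induced by an isomorphism $X\to X$, that is, by some $g\in\Aut(X)$ with $g^*=\phi$ (up to the inversion convention). Injectivity comes from the \emph{uniqueness} clause of the same theorem. If $g^*=\id$, then both $g$ and $\id_X$ induce the identity Hodge isometry. Uniqueness then forces $g=\id_X$.

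Canonicity is built in: the map $\rho$ is defined by pullback and involves no choices. We only need to fix a convention, using $g\mapsto (g^{-1})^*$ so that $\rho$ is a homomorphism rather than an anti-homomorphism.

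The only point requiring care is this injectivity and the variance convention. All the real content, including the faithfulness of $\Aut(X)$ on cohomology, is packaged in the uniqueness statement of Theorem~\ref{theorem: global torelli}. So there is no genuine obstacle beyond invoking that theorem correctly.
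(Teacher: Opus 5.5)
Your proposal is correct and matches the paper. The paper gives no separate proof and treats the corollary as immediate from Theorem~\ref{theorem: global torelli} with $X'=X$: the existence part gives surjectivity of $g\mapsto (g^{-1})^*$, and the uniqueness part gives injectivity, exactly as you argue.
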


The image of the period map is described by the following theorem.

\begin{thm}[{\cite{laza2010period,looi2009period}}]\label{thm:global_Torelli_image}
    The period map for cubic fourfolds gives an isomorphism of quasi-projective varieties
    \[
    \calP:\calM\to (\DD\backslash(\calH_2\cup\calH_6))/\Gamma.
    \]
\end{thm}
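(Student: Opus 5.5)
The statement to prove is Theorem~\ref{thm:global_Torelli_image}: the period map identifies $\calM$ with $(\DD\backslash(\calH_2\cup\calH_6))/\Gamma$. I would argue in four stages: the map is well defined and injective, it avoids the two arrangements, its image is open with complement a union of hyperplanes, and that complement is exactly $\calH_2\cup\calH_6$.

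\textbf{Well-definedness and injectivity.} Since $H^{3,1}(X)$ is a line in $\Lambda_{0,\CC}$, the Hodge--Riemann bilinear relations place its class in $\DD\sqcup\overline{\DD}$. After fixing the component via a marking compatible with $O^+$, the period lands in $\DD$, and the ambiguity of the marking is exactly $\Gamma=O^*(\Lambda_0)$. The reason is that markings must come from isometries of $\Lambda$ fixing $h^2$, and these restrict to $\widetilde O(\Lambda_0)$ on $\Lambda_0$. Injectivity then follows from Theorem~\ref{theorem: global torelli}. Local Torelli (Griffiths residue calculus, using $h^{3,1}=1$ and $\dim\calM=20$) shows that $\calP$ is an open immersion of $20$-dimensional spaces, hence an isomorphism onto an open subset by Zariski's main theorem.

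\textbf{Avoiding $\calH_6$ and $\calH_2$.} Suppose $r$ is a short root orthogonal to the period. Then $r$ is an algebraic class with $r^2=2$, and $\langle h^2,r\rangle$ spans a lattice of discriminant $6$, which is Hassett's special divisor $\mathcal C_6$. These classes correspond to cubics with an ordinary double point, so they cannot occur for smooth $X$. Suppose instead the period is orthogonal to a long root $r$. Then $h^2$ and $r$ span a saturation of discriminant $2$, which corresponds to the determinantal or secant-to-Veronese case $\mathcal C_2$ ($X$ would contain a plane of the special kind, degenerating to the secant variety of the Veronese surface). Hassett's analysis excludes this for smooth cubics. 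For each of the two cases I would give a direct lattice argument, for example by producing a class $T$ with $T^2$ and $T\cdot h^2$ violating the positivity or integrality constraints forced by the Hodge index theorem and by the known cohomology of cycles on smooth cubics.

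\textbf{Surjectivity onto the complement.} This is the main obstacle. The approach is GIT and compactification, following Laza. Compare the GIT quotient $\overline{\calM}$ of cubic fourfolds with the Baily--Borel compactification, or better with Looijenga's arrangement compactification associated to $\calH_2\cup\calH_6$. The period map extends to a birational morphism from a partial Kirwan-type resolution of $\overline{\calM}$. Its complement consists of:
\begin{itemize}
\item the discriminant divisor, which maps onto $\calH_6/\Gamma$ (the period of a nodal cubic is given by the limiting mixed Hodge structure, which is pure with a vanishing cycle $r$, $r^2=2$);
\item the determinantal cubic, whose exceptional divisor maps onto $\calH_2/\Gamma$;
\item further boundary components of codimension at least $2$.
\end{itemize}

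\textbf{Conclusion.} The image of $\calP$ is open, its complement is a union of Heegner divisors by Looijenga's theorem on arrangement complements, and it contains no point of $\calH_2\cup\calH_6$ by the second stage. Hence the image is exactly $(\DD\backslash(\calH_2\cup\calH_6))/\Gamma$. The hardest input in this argument is the classification of GIT-polystable singular cubics and the identification of their limiting periods.
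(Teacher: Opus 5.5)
The paper does not prove this theorem; it quotes it from \cite{laza2010period,looi2009period}. Your skeleton matches the one behind those references: Voisin's injectivity and local Torelli, then exclusion of the two arrangements, then Laza's GIT comparison for the reverse inclusion. However, two of your steps would not work as written.

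\textbf{The exclusion step.} You cannot exclude $\calH_6$ and $\calH_2$ by a ``direct lattice argument'' based on the Hodge index theorem and integrality, because the Hodge structures on these hyperplanes violate no such constraint. A short root $r$ of type $(2,2)$ is primitive with $r^2=2>0$, which is exactly what Hodge--Riemann requires on $H^{2,2}_{\mathrm{prim}}$. For a long root, the saturation of $\langle h^2,r\rangle$ contains $T=(h^2\pm r)/3$ with $T^2=1$ and $T\cdot h^2=1$. Together with $h^2$ it spans a positive definite lattice of discriminant $2$, again with no contradiction. The obstruction is geometric; this is Hassett's result, building on Voisin. For instance, for $\calH_6$ the Beauville--Donagi isometry turns $r$ into an algebraic class of Beauville--Bogomolov square $-2$ on the Fano variety of lines, orthogonal to the ample Pl\"ucker class. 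This is impossible because such classes are wall divisors. The case $\calH_2$ needs its own geometric argument. Your remark that these classes ``correspond to nodal cubics'' is not an argument either. Knowing that nodal cubics have periods on $\calH_6$ does not stop smooth cubics from having periods there, unless you already know the period map is injective on a partial compactification containing the nodal locus, and that is part of what is being proved.

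\textbf{The reverse inclusion.} Your final deduction does not give it; your second stage only yields $\calP(\calM)\subseteq(\DD\setminus(\calH_2\cup\calH_6))/\Gamma$. There is no general theorem of Looijenga saying that the complement of the image is a union of Heegner divisors; for cubic fourfolds that is essentially the statement you are proving. Even granting it, you would still have to exclude the Heegner divisors $\mathcal C_d$ with $d>6$. What actually closes the argument is properness. Suppose $\calP$ extends to a morphism from the blow-up $\widehat{\calM}$ of $\overline{\calM}$ at the determinantal cubic to a projective compactification containing $\DD/\Gamma$. This morphism is then surjective. The complement $\widehat{\calM}\setminus\calM$ consists of the strict transform of the irreducible discriminant divisor and the exceptional divisor, which map into the closures of $\calH_6/\Gamma$ and $\calH_2/\Gamma$. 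Hence every point off $\calH_2\cup\calH_6$ comes from $\calM$. The existence of that extension is the real content of Laza's work: the GIT classification of singular cubics, the analysis of their limiting mixed Hodge structures, and the comparison with Looijenga's compactification. Your outline treats it as given. Your dismissal of ``further boundary components of codimension at least $2$'' is only valid after the extension has been established. Before that, codimension is no reason to ignore them: the determinantal cubic is a single point of $\overline{\calM}$ whose blow-up dominates $\calH_2/\Gamma$.
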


\begin{rmk}
    Since $[O^+(\Lambda_0):O^*(\Lambda_0)]=[O(\Lambda_0):\widetilde{O}(\Lambda_0)]=2$ and $-\id_{\Lambda_0}\in O^+(\Lambda_0)\setminus O^*(\Lambda_0)$, we have
    \[
    O^+(\Lambda_0)=\langle O^*(\Lambda_0),-\id_{\Lambda_0}\rangle,
    \qquad
    O(\Lambda_0)=\langle\widetilde{O}(\Lambda_0),-\id_{\Lambda_0}\rangle.
    \]
    Therefore
    \[
        (\DD\backslash(\calH_2\cup\calH_6))/\Gamma=(\DD\backslash(\calH_2\cup\calH_6))/O^+(\Lambda_0),
    \]
    which also equals
    \[
    ((\DD\sqcup \overline{\DD})\backslash(\calH_2\cup\calH_6))/\widetilde{O}(\Lambda_0)=((\DD\sqcup \overline{\DD})\backslash(\calH_2\cup\calH_6))/O(\Lambda_0).
    \]
\end{rmk}

\subsection{Lattice Theory}

By a lattice we mean a free $\ZZ$-module of finite rank equipped with an integer-valued nondegenerate symmetric bilinear form.

\subsubsection{The Discriminant Form}

For a lattice $L$, its \emph{discriminant} $\disc(L)$ is the determinant of a Gram matrix of $L$. The lattice is \emph{unimodular} if $\disc(L)=\pm1$. Its \emph{dual lattice} is
\[
L^\vee=\{x\in L_\QQ\mid \langle x,y\rangle\in\ZZ\text{ for every }y\in L\},
\]
and its \emph{discriminant group} is the finite abelian group $A_L:=L^\vee/L$. The \emph{length} $\ell(A_L)$ is the minimal number of generators of $A_L$. Every lattice has a discriminant bilinear form. If $L$ is even, it also has a discriminant quadratic form:
\begin{itemize}
    \item $b_L:A_L\times A_L\to\QQ/\ZZ$, defined by $(x_1+L,x_2+L)\mapsto \langle x_1,x_2\rangle+\ZZ$;
    \item $q_L:A_L\to \QQ/2\ZZ$, defined by $x+L\mapsto \langle x,x\rangle+2\ZZ$.
\end{itemize}
We use Conway--Sloane symbol \cite[Chapter 15]{conway1999sphere} \cite[Appendix A]{laza2022automorphisms} for discriminant quadratic forms.

\begin{rmk}
    In \cite{Brandhorst_Hofmann_2023} and related references, the term \emph{integral lattice} is used to distinguish these lattices from non-integral or hermitian lattices, and the notation $D_L$ is used in place of $A_L$.
\end{rmk}

For an even lattice $L$, let $O(q_L)$ be the group of automorphisms of $A_L$ preserving $q_L$. There is a natural group homomorphism $O(L)\to O(q_L)$. We denote by $\widetilde{O}(L)$ the kernel of this homomorphism.

If the bilinear form on $L_1$ is the restriction of the form on $L_2$, we call $L_1$ a \emph{sublattice} of $L_2$. We call $L_2$ an \emph{overlattice} of $L_1$ if $L_2/L_1$ is finite. The sublattice $L_1$ is \emph{primitive} in $L_2$ if $L_2/L_1$ is free.

A $G$-action on a lattice $L$ means a linear action preserving the bilinear form. It induces an action on $A_L$, and this action preserves $q_L$ when $L$ is even. We define the invariant and coinvariant lattices by
\[
L^G:=\{x\in L\mid gx=x\text{ for all }g\in G\},
\qquad
S_G(L):=(L^G)^\perp_L.
\]
Both are primitive sublattices of $L$. When $L=\Lambda_0$ is the primitive cohomology lattice of a cubic fourfold $X$, we also write $S_G(X)$ for $S_G(\Lambda_0)$.

\subsubsection{Relation between Extensions and Discriminant Forms}

Assume that $L_1$ and $L_2$ are even lattices and $L_2$ is an overlattice of $L_1$. From the chain of inclusions $L_1\subset L_2\subset L_2^\vee\subset L_1^\vee$, the subgroup $H_{L_2}=L_2/L_1\subset A_{L_1}$ is isotropic, and $L_2^\vee/L_1=(H_{L_2})^\perp\subset A_{L_1}$. The resulting correspondence is stated below.
\begin{thm}[{\cite[Proposition~1.4.1]{Nikulin_1980}}]
    Fix an even lattice $L_1$. Then $L_2\mapsto H_{L_2}$ gives a bijection between even overlattices of $L_1$ and isotropic subgroups of $A_{L_1}$. The discriminant form on $L_2$ is given by
    \[
    q_{L_2}=\left.\left(q_{L_1}|_{(H_{L_2})^\perp}\right)\right/H_{L_2}.
    \]
\end{thm}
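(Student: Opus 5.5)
The statement is Nikulin's correspondence, so I plan to prove it directly from the chain $L_1\subset L_2\subset L_2^\vee\subset L_1^\vee$.

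\emph{Well-definedness of $L_2\mapsto H_{L_2}$.} Let $L_2$ be an even overlattice of $L_1$. Since $L_2/L_1$ is finite, $L_{1,\QQ}=L_{2,\QQ}$. Every $x\in L_2$ pairs integrally with $L_2\supset L_1$, so $L_2\subset L_1^\vee$. Dualizing reverses inclusions, giving $L_2^\vee\subset L_1^\vee$, and $L_2\subset L_2^\vee$ because $L_2$ is integral. Hence $H_{L_2}=L_2/L_1$ is a subgroup of $A_{L_1}$. For $x\in L_2$ we have $\langle x,x\rangle\in2\ZZ$ because $L_2$ is even, so $q_{L_1}(x+L_1)=0$ and $H_{L_2}$ is isotropic.

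\emph{Inverse map.} Given an isotropic subgroup $H\subset A_{L_1}$, let $L_H$ be its preimage in $L_1^\vee$. This is a free $\ZZ$-module of full rank containing $L_1$ with finite index. I must check it is an even integral lattice.
\begin{itemize}
\item Evenness: for $x\in L_H$, $q_{L_1}(\bar x)=0$ in $\QQ/2\ZZ$, so $\langle x,x\rangle\in 2\ZZ$.
\item Integrality: for $x,y\in L_H$, polarize:
\[
2\langle x,y\rangle=\langle x+y,x+y\rangle-\langle x,x\rangle-\langle y,y\rangle\in 2\ZZ,
\]
so $\langle x,y\rangle\in\ZZ$.
\end{itemize}
The two constructions are mutually inverse because $L_2$ equals the preimage of $L_2/L_1$, and $H$ equals $L_H/L_1$. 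This gives the bijection.

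\emph{Discriminant form.} Take $y\in L_1^\vee$. Then $y\in L_2^\vee$ exactly when $\langle y,x\rangle\in\ZZ$ for all $x\in L_2$, that is, when $b_{L_1}(\bar y,h)=0$ for all $h\in H_{L_2}$. So $L_2^\vee/L_1=H_{L_2}^\perp$, and therefore
\[
A_{L_2}=L_2^\vee/L_2\cong H_{L_2}^\perp/H_{L_2}.
\]
Under this identification, $q_{L_2}(y+L_2)=\langle y,y\rangle\bmod 2\ZZ=q_{L_1}(y+L_1)$. This value depends only on the class modulo $H_{L_2}$, because for $h\in L_2$ we have $\langle y+h,y+h\rangle=\langle y,y\rangle+2\langle y,h\rangle+\langle h,h\rangle$ with $\langle y,h\rangle\in\ZZ$ and $\langle h,h\rangle\in2\ZZ$. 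Hence $q_{L_2}=\left(q_{L_1}|_{H_{L_2}^\perp}\right)/H_{L_2}$.

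The only step that needs any care is integrality of the inverse construction. That is where $\QQ/2\ZZ$-valued isotropy is essential: polarization turns the condition $q=0$ on $H$ into integrality of the bilinear form. Isotropy for $b_{L_1}$ alone would give integrality but not evenness.
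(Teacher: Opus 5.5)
Your proof is correct and takes the same approach as the paper. The paper cites Nikulin and only sketches the argument through the chain $L_1\subset L_2\subset L_2^\vee\subset L_1^\vee$; you fill in the details, including the inverse construction, where isotropy for $q$ (not just for $b$) gives evenness and, by polarization, integrality.
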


\begin{rmk}
    In the theorem, $L_2$ is identified as a sublattice of $L_1^\vee$, while $H_{L_2}$ is identified as a subgroup of $A_{L_1}$.
\end{rmk}

Suppose $L_1=S_1\oplus S_2\subset L_2$ and both $S_1$ and $S_2$ are primitive in $L_2$. Then
\[
H_{L_2}\hookrightarrow A_{L_1}=A_{S_1}\oplus A_{S_2},
\]
and the two projections $p_{S_i}:H_{L_2}\to A_{S_i}$ are injective. Let their images be $H_{L_2,S_i}$. The map
\[
p_{S_1}\circ p_{S_2}^{-1}:H_{L_2,S_2}\longrightarrow H_{L_2,S_1}
\]
is an anti-isometry, that is, an isomorphism of finite abelian groups that changes the sign of the quadratic form. This gives the following correspondence.
\begin{thm}[{\cite[Proposition~1.5.1 and Corollary~1.5.2]{Nikulin_1980}}]\label{thm:double_equivariant_discriminant}
    \begin{enumerate}
        \item For even lattices $S_1$ and $S_2$, an even overlattice $L\supset S_1\oplus S_2$ such that $S_1$ and $S_2$ are primitive in $L$ is determined by subgroups $H_1\subset A_{S_1}$ and $H_2\subset A_{S_2}$, together with an anti-isometry $\gamma:H_1\xrightarrow{\sim} H_2$.
        \item Suppose the data $(H_1,H_2,\gamma)$ and $(H_1',H_2',\gamma')$ define primitive embeddings $S_1\hookrightarrow L$ and $S_1\hookrightarrow L'$. The embeddings are isomorphic by an isometry inducing the identity on $S_1$ if and only if $H_1=H_1'$ and $\gamma=\overline{\psi}\circ\gamma'$ for some $\psi\in O(S_2)$. There is an isometry $L\xrightarrow{\sim}L'$ preserving $S_1$ if and only if
        \[
        \gamma\circ\overline{\varphi}=\overline{\psi}\circ\gamma'
        \]
        for some $\varphi\in O(S_1)$ and $\psi\in O(S_2)$.
    \end{enumerate}
\end{thm}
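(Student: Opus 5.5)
The plan is to deduce both parts from the preceding overlattice correspondence (Nikulin's Proposition~1.4.1). Apply it with $L_1=S_1\oplus S_2$, so that $A_{L_1}=A_{S_1}\oplus A_{S_2}$ and $q_{L_1}=q_{S_1}\oplus q_{S_2}$. Even overlattices $L$ of $S_1\oplus S_2$ then correspond to isotropic subgroups $H\subset A_{S_1}\oplus A_{S_2}$. It remains to translate two things: the primitivity of $S_1$ and $S_2$ into a condition on $H$, and isometries of the overlattices into maps of the corresponding subgroups.

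\emph{Step 1 (primitivity $\Leftrightarrow$ graph).} I will show that $S_1$ is primitive in $L$ if and only if $H\cap(A_{S_1}\oplus 0)=0$, i.e.\ the projection $p_{S_2}|_H$ is injective.

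Suppose an element of $H$ has the form $(a,0)$. Then it is represented by some $x\in L$ with $x\in S_1^\vee+S_2$. Subtracting the $S_2$-part, we get $x'\in L\cap S_1^\vee\subset L\cap S_{1,\QQ}$. Primitivity of $S_1$ gives $x'\in S_1$, hence $a=0$. Conversely, if $x\in L\cap S_{1,\QQ}$, then $x\in S_1^\vee$, since $L\subset L_1^\vee$. So $x$ defines $(x+S_1,0)\in H$, and injectivity forces $x\in S_1$.

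By symmetry, the same holds with $S_1$ and $S_2$ exchanged. Thus both $S_i$ are primitive exactly when $H$ is the graph of an isomorphism $\gamma:H_1\to H_2$ between its projections $H_i\subset A_{S_i}$. The isotropy of $H$ reads $q_{S_1}(a)+q_{S_2}(\gamma a)=0$ for all $a\in H_1$, i.e.\ $\gamma$ is an anti-isometry. Conversely, the graph of any anti-isometry $\gamma:H_1\to H_2$ is isotropic. Since its projections are injective, it yields an even overlattice in which both $S_i$ are primitive. This proves (1).

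\emph{Step 2 (isometries).} First, an isometry $\Phi:L\to L'$ of overlattices of $S_1\oplus S_2$ that restricts to $\varphi\in O(S_1)$ on $S_1$ must send $S_2=S_1^\perp_L$ to $S_1^\perp_{L'}=S_2$. Hence it restricts to some $\psi\in O(S_2)$, and $\Phi$ is the $\QQ$-linear extension of $\varphi\oplus\psi$.

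Second, $\varphi\oplus\psi$ extends to an isometry $L\to L'$ if and only if $\overline\varphi\oplus\overline\psi$ maps $H$ onto $H'$, because $L$ and $L'$ are the preimages of $H$ and $H'$ in $L_1^\vee$.

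Third, write $H=\{(a,\gamma a)\}$ and $H'=\{(a',\gamma' a')\}$. The condition $(\overline\varphi\oplus\overline\psi)(H)=H'$ becomes $\overline\varphi(H_1)=H_1'$ and $\gamma'\circ\overline\varphi=\overline\psi\circ\gamma$ on $H_1$. Replacing $(\varphi,\psi)$ by their inverses puts this in the stated form $\gamma\circ\overline{\varphi}=\overline{\psi}\circ\gamma'$.

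Specializing to $\varphi=\id$ gives the first criterion: $H_1=H_1'$ and $\gamma$ and $\gamma'$ differ by $\overline\psi$ for some $\psi\in O(S_2)$.

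I expect no serious obstacle. The only delicate point is bookkeeping: keeping the directions and conventions of $\gamma$ (from $H_1$ to $H_2$ versus its inverse) consistent with the statement, and using that $S_2$ is recovered intrinsically as $S_1^\perp$, so that an isometry fixing $S_1$ automatically preserves $S_2$.
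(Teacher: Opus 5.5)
Your proposal is correct and follows the same route the paper takes: the paper cites Nikulin and justifies the statement by applying the overlattice correspondence (Proposition~1.4.1) to $S_1\oplus S_2$, with primitivity of both factors corresponding to injectivity of the two projections of the glue group, which is therefore the graph of an anti-isometry. For part (2), you use $S_2=(S_1)^\perp_L$ and the fact that $\varphi\oplus\psi$ extends to an isometry $L\to L'$ exactly when $\overline{\varphi}\oplus\overline{\psi}$ carries one graph onto the other; this supplies details the paper leaves to the citation.
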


\subsubsection{Enumeration of Equivariant Extensions}

For a lattice $L$ equipped with an isometry $f_L$, set
\[
    G(L,f_L):=\operatorname{im}\bigl(C_{O(L)}(f_L)\to O(q_L)\bigr)
    \subset C_{O(q_L)}(\overline{f_L}),
\]
where $\overline{f_L}$ is the induced action on $A_L$. When no confusion is likely, we write $G_L$ for $G(L,f_L)$.

Let $(M,f_M)$ and $(N,f_N)$ be even lattices equipped with isometries. Consider primitive extensions
\[
M\oplus N\subset L
\]
such that both $M$ and $N$ are primitive in $L$, and such that
$f_M\oplus f_N$ extends to an isometry of $L$. By the correspondence
between even overlattices and isotropic subgroups of the discriminant
form, such an extension is determined by an isotropic subgroup
\[
H\subset A_M\oplus A_N.
\]
The lattices $M$ and $N$ remain primitive in $L$ exactly when the two projections of $H$ to $A_M$ and $A_N$ are injective. In this case, $H$ is the graph of an anti-isometry $\gamma:H_M\xrightarrow{\sim}H_N$. The extension is equivariant with respect to $f_M\oplus f_N$ if and only if
\[
\gamma\circ \overline f_M|_{H_M}
=
\overline f_N|_{H_N}\circ \gamma .
\]

Let $\mathcal G(M,N)$ be the finite set of these equivariant anti-isometries. The group $G_N$ acts on the left, and $G_M$ acts on the right; their actions also transform the image and domain subgroups. The isomorphism classes of equivariant primitive extensions that preserve the two factors $(M,f_M)$ and $(N,f_N)$ are therefore represented by the finite double quotient
\[
G_N\backslash \mathcal G(M,N)/G_M .
\]

Thus the extension problem reduces to finite data on the discriminant groups. The remaining step is to compute the image $\operatorname{im}(C_{O(L)}(f_L)\to O(q_L))$.

\subsubsection{Computing $\operatorname{im}(C_{O(L)}(f_L)\to O(q_L))$}

We briefly recall from \cite{Brandhorst_Hofmann_2023} how to compute
\[
    G_L=\operatorname{im}\bigl(C_{O(L)}(f_L)\to O(q_L)\bigr).
\]
This algorithm uses the implementation in OSCAR \cite{OSCAR}.

If $L$ is definite, then $O(L)$ is finite and can be computed directly.
Suppose that $L$ is indefinite of rank at least $3$ and
$f_L=\pm\id$. Miranda--Morrison theory describes the cokernel of
\[
    O(L)\longrightarrow O(q_L)
\]
in terms of local determinant and spinor-norm data. Only the primes
dividing $\disc(L)$ contribute, and strong approximation underlies the
resulting local-to-global criterion; see
\cite{Kneser_1966_StrongApproximation,Brandhorst_Hofmann_2023}.
Every indefinite lattice occurring in this step has signature $(r,2)$
with $r\geq1$, so the rank hypothesis is satisfied. We use the resulting
finite image in $O(q_L)$ and do not need the explicit obstruction
sequence.

If $f_L\neq\pm\id$, its cyclotomic components carry natural Hermitian
lattice structures, and the centralizer of $f_L$ is described by the
corresponding unitary groups. For an irreducible cyclotomic component,
the image on the discriminant form is determined by local unitary-group
computations; see \cite{Brandhorst_Hofmann_2023}.

\subsection{Finite-Order Isometries}

Let $L$ be an even lattice, and let $f\in O(L)$ have order $m$. For each divisor $d$ of $m$, set $L_d=\ker(\Phi_d(f))$, where $\Phi_d$ is the $d$'th cyclotomic polynomial. Then there is an inclusion of lattices
\[
    \bigoplus_{d\mid m}L_d\subset L,
\]
whose two sides have the same rank.

On $\ker\Phi_m(f)$, one further has the real decomposition
\[
    \ker\Phi_m(f)\otimes\RR
    =
    \bigoplus_{k\in(\ZZ/m\ZZ)^\times/\{\pm1\}}
    \ker\bigl(f+f^{-1}-\zeta_m^k-\zeta_m^{-k}\bigr),
\]
where each summand carries the restriction of the bilinear form.

For any fixed finite order, there are only finitely many conjugacy classes of isometries of $L$, whether $L$ is definite or indefinite. See \cite[\S4.1]{Brandhorst_Hofmann_2023} and \cite{Fritz}.

\begin{thm}
    Given an even lattice $L$ and a positive integer $m$, the set of conjugacy classes of elements
    $f\in O(L)$ with $f^m=\id$ is finite.
\end{thm}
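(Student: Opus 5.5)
The plan is to reduce to the case of indefinite lattices and use the Jordan/Hermitian decomposition of an isometry, exactly as in \cite[\S4.1]{Brandhorst_Hofmann_2023}. Let $f\in O(L)$ with $f^m=\id$. For each $d\mid m$ we have the primitive sublattices $L_d=\ker\Phi_d(f)$, and the induced isometry $f_d=f|_{L_d}$ has minimal polynomial $\Phi_d$. The lattice $\bigoplus_{d\mid m}L_d$ has finite index in $L$. We bound this index uniformly: for $d\neq d'$ the resultant of $\Phi_d$ and $\Phi_{d'}$ annihilates the glue. More precisely, the glue groups $L/(L_d\oplus L_d^\perp)$ are killed by integers depending only on $m$. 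So $\disc(L_d)$ divides a number bounded in terms of $\disc(L)$ and $m$.

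In the first step we enumerate the possible pieces. The ranks $\operatorname{rank}L_d$ sum to $\operatorname{rank}L$, and each $\operatorname{rank}L_d$ is a multiple of $\varphi(d)$. Each $L_d$ has bounded discriminant and given rank. By finiteness of genera and of classes in a genus, there are only finitely many isometry classes of lattices available for the $L_d$.

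In the second step we handle a single piece, counting the conjugacy classes of $f_d$ on a fixed $L_d$. Such an $f_d$ makes $L_d$ a torsion-free module over $\ZZ[\zeta_d]$, hence a fractional ideal sum. The form is recovered from a Hermitian form over $\ZZ[\zeta_d]$ via the trace. Isomorphism classes of Hermitian lattices of given rank and bounded discriminant form finitely many classes, since genus and class numbers are finite. Thus there are finitely many $O(L_d)$-conjugacy classes of such $f_d$. For $d=1,2$ we simply have $f_d=\pm\id$.

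In the third step we glue. Given the finitely many choices of $(L_d,f_d)$, the isometry $(L,f)$ is obtained by iterated equivariant primitive extensions. At each stage the isomorphism classes are parametrized by the finite double quotient $G_N\backslash\mathcal G(M,N)/G_M$ from the preceding subsection, and $\mathcal G(M,N)$ is itself finite. There are also finitely many isometry classes of overlattices with the same rank and signature as $L$. Any two pairs $(L,f)$, $(L,f')$ yielding isomorphic data are conjugate in $O(L)$. Hence the total number of classes is finite.

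The main obstacle is the second step in the indefinite case. There one must carefully invoke finiteness of class numbers for Hermitian lattices over $\ZZ[\zeta_d+\zeta_d^{-1}]$, which rests on reduction theory (Borel--Harish-Chandra) for arithmetic subgroups of unitary groups. I would cite this step rather than reprove it. The definite case is immediate, since $O(L)$ is then finite.
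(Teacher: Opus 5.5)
Your proposal is correct and follows essentially the same route as the paper. The paper gives no proof of its own beyond citing \cite[\S4.1]{Brandhorst_Hofmann_2023} and sketching the argument: split off the cyclotomic pieces $L_d=\ker\Phi_d(f)$, note that only finitely many genera (each with finitely many classes) of the hermitian $\ZZ[\zeta_d]$-lattices $L_d$ can occur, and glue successively through the finite equivariant-extension data; your resultant bound on $|\disc(L_d)|$ and your iterated gluing supply the details that this sketch leaves implicit.
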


An algorithm for enumerating finite-order isometries was developed in \cite{Brandhorst_Hofmann_2023} and implemented in OSCAR \cite{OSCAR}. We briefly review it.

Each $L_d$ is naturally a hermitian lattice over $\ZZ[\zeta_d]$, where $\zeta_d$ acts as $f|_{L_d}$. Once $L$ is fixed, only finitely many genera of the $L_d$ can occur. These genera are determined by their ranks, signatures, determinants, and local invariants. Each genus contains only finitely many isometry classes, so representatives for all possible combinations can be enumerated. After passing to the underlying integral lattices, these representatives carry the required actions.

Finally, one uses the inclusion $\bigoplus_{d\mid m}L_d\subset L$ and the equivariant-extension algorithm above to enumerate the possible gluing data successively. The resulting actions on $L$ give the desired representatives.

\section{Moduli of Cubic Fourfolds with Group Actions}\label{sec:prescribed-automorphisms}

\subsection{GIT Construction}\label{subset:GIT}

Let $A$ be a finite group acting faithfully on $\PP^5$ through
\[
    \rho:A\hookrightarrow\PGL(6,\CC).
\]
Assume that this action preserves a smooth cubic fourfold. Let $\widehat{\rho(A)}$ be the inverse image of $\rho(A)$ in $\SL(6,\CC)$, and let $\xi$ be the character by which $\widehat{\rho(A)}$ acts on the defining cubic form. The induced action on $H^{3,1}$ gives a character
\[
    \chi:A\longrightarrow\CC^\times.
\]
Its kernel $A^s$ is the symplectic subgroup. If $A=\Aut(X)$, then
\[
    1\longrightarrow\Aut^s(X)\longrightarrow\Aut(X)
    \xrightarrow{\chi}\mu_{m(X)}\longrightarrow1.
\]
For such an action, $[A:A^s]$ is its non-symplectic index. 

Following \cite[\S2]{yu2020moduli}, let $\mathcal V_{A,\rho,\xi}$ be the subspace of cubic forms on which $\widehat{\rho(A)}$ acts through $\xi$, and set
\[
    \mathcal F^{\mathrm{GIT}}_{A,\rho,\xi}
    =N_\rho^\xi(A)\dbs\PP\mathcal V_{A,\rho,\xi}^{\mathrm{sm}},
\]
where
\[
N_\rho^\xi(A)=\{g\in\SL(6,\CC)\mid
 g\widehat{\rho(A)}g^{-1}=\widehat{\rho(A)},\
 \xi(ghg^{-1})=\xi(h)\text{ for all }h\in\widehat{\rho(A)}\}.
\]
The forgetful map to the moduli space $\calM$ of smooth cubic fourfolds is finite \cite[Proposition~2.7]{yu2020moduli}. If $A=\Aut(X)$ for a general member, this map is the normalization of its image. In this paper, a \emph{family} associated with $(A,\rho,\xi)$ is a connected component of the image of this map in $\calM$. We denote a chosen component by $\calF_{A,\rho,\xi}$. Each component is irreducible: the smooth locus in the projectivized semi-invariant space is open in an irreducible variety, and irreducibility is preserved under the quotient and forgetful maps.

The corresponding period domain is defined using the $\chi$-eigenspace
\[
    (\Lambda_{0,\CC})^\chi
    =\{x\in\Lambda_{0,\CC}\mid a(x)=\chi(a)x\text{ for all }a\in A\}.
\]
Here $A$ acts through its induced representation on $\Lambda_0$.
If $\chi=\overline\chi$, the period domain is of type IV and has two connected components. Complex conjugation exchanges these components. If $\chi\neq\overline\chi$, the period domain is a connected complex ball. In this case, complex conjugation sends the $\chi$-eigenspace to the $\overline\chi$-eigenspace. Yu--Zheng identify this moduli space with an arrangement complement in the corresponding arithmetic quotient \cite[Theorem~1.1]{yu2020moduli}. This gives the following dimension formula.

\begin{prop}[{\cite[Theorem~1.1(i)]{yu2020moduli}}]\label{prop:moduli_dim}
The dimension of a connected family $\calF_{A,\rho,\xi}$ is
\[
\begin{cases}
\dim(\Lambda_{0,\CC}^\chi)-2,&\chi=\overline\chi,\\
\dim(\Lambda_{0,\CC}^\chi)-1,&\chi\neq\overline\chi.
\end{cases}
\]
\end{prop}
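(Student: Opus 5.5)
We need to prove Proposition~\ref{prop:moduli_dim}, the dimension formula for $\calF_{A,\rho,\xi}$. The strategy is to compute the dimension on the period side: by the global Torelli theorem and Theorem~\ref{thm:global_Torelli_image}, a connected family maps, up to a finite map, onto a connected component of an arrangement complement in an arithmetic quotient of the $\chi$-eigenperiod domain. Concretely, the plan proceeds in the following steps.

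\textbf{Step 1: periods lie in the eigenspace.} Take a general $X$ in $\calF_{A,\rho,\xi}$ and fix a marking $\Lambda\cong H^4(X,\ZZ)$. The line $H^{3,1}(X)$ lies in $(\Lambda_{0,\CC})^\chi$ by the definition of $\chi$. Conversely, for a period point $[x]\in\DD$ with $x\in(\Lambda_{0,\CC})^\chi$ away from $\calH_2\cup\calH_6$, the isometries in the image of $A$ preserve the Hodge decomposition and $h^2$. By Theorem~\ref{theorem: global torelli} they are therefore induced by automorphisms of the corresponding cubic $X_x$. One must still check that these automorphisms realize the same projective representation $\rho$ and character $\xi$; this holds by rigidity of finite group representations under deformation within a connected component. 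Hence $\calF_{A,\rho,\xi}$ is, up to finite maps, a component of $\bigl(\DD^\chi\setminus(\calH_2\cup\calH_6)\bigr)/\Gamma_A$, where $\DD^\chi=\DD\cap\PP((\Lambda_{0,\CC})^\chi)$ and $\Gamma_A$ is a finite-index subgroup of the centralizer.

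\textbf{Step 2: dimension of $\DD^\chi$.} Let $V=(\Lambda_{0,\CC})^\chi$ and $n=\dim V$. There are two cases.
\begin{itemize}
\item If $\chi=\overline\chi$, then $\chi$ is real-valued, so $V$ is the complexification of a real subspace $V_\RR$. The restricted form has signature $(n-2,2)$, since the positive-definite part $(H^{3,1}\oplus H^{1,3})_\RR$ sits inside it. The domain $\{x\in V:\langle x,x\rangle=0,\ \langle x,\bar x\rangle<0\}$ is then a type IV domain of dimension $n-2$: one subtracts one for projectivization and one for the quadric condition.
\item If $\chi\neq\overline\chi$, then $\langle x,x\rangle=0$ holds automatically on $V$, because $\langle ax,ax\rangle=\chi(a)^2\langle x,x\rangle$ and $\chi^2\neq1$ for some $a$. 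Only the open condition $\langle x,\bar x\rangle<0$ remains. This makes $V$ a hermitian space of signature $(1,n-1)$ up to sign convention, so the domain is a ball of dimension $n-1$.
\end{itemize}

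\textbf{Step 3: the arrangement does not lower dimension.} The hyperplane arrangement $\calH_2\cup\calH_6$ restricted to $\DD^\chi$ is locally finite. It is also a proper subset, since it misses the period of $X$, so its complement is open and dense in each component. Quotients by arithmetic groups acting properly discontinuously, together with finite maps, preserve dimension. The forgetful map from the GIT model is finite by \cite[Proposition~2.7]{yu2020moduli}, so the dimension of $\calF_{A,\rho,\xi}$ equals $\dim\DD^\chi$, which gives the formula.

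\textbf{Main obstacle.} The hardest part is Step~1's surjectivity onto a full open subset of the eigenperiod domain, namely showing that the image of the family is open in $\DD^\chi$ rather than a lower-dimensional locus. I would handle this by comparing tangent spaces. Deformations of $X$ preserving the $A$-action correspond to the $A$-invariant part of $H^1(T_X)\cong\mathrm{Hom}(H^{3,1},H^{2,2})$, which matches the $\chi$-part of $H^{2,2}$. The local Torelli theorem then shows that the period map restricted to equivariant deformations is a local isomorphism onto $\DD^\chi$.
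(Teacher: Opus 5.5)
Your proposal is correct and follows the paper's route. The paper invokes Yu--Zheng's identification of the family with an arrangement complement in an arithmetic quotient of the $\chi$-eigenperiod domain and reads off the dimension: a type IV domain of dimension $n-2$ for real $\chi$, and a ball of dimension $n-1$ otherwise. Your Steps~1--3 sketch exactly that identification.

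Two small corrections. First, $(H^{3,1}\oplus H^{1,3})_\RR$ is the negative-definite plane, not the positive-definite one; this is consistent with the signature $(n-2,2)$ you wrote. Second, the existence of a cubic with the lattice $A$-action at every point of the eigen-arrangement complement already follows from Theorem~\ref{thm:global_Torelli_image} and global Torelli. The delicate point is therefore the constancy of $(\rho,\xi)$ in Step~1, not openness of the image. Your equivariant local Torelli argument is what settles that constancy, and on its own it also gives the dimension count.
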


\subsection{Saturation of Families}

We use the saturation statement for symmetry types proved by Yu--Zheng \cite{yu2020moduli}.

\begin{lem}\label{lem:general-saturation}
Let $(A,\rho,\xi)$ be as above, and assume that its smooth locus is nonempty. After replacing it by an equivalent action, there is an action $(A',\rho',\xi')$ such that
\[
\rho(A)\subseteq\rho'(A'),
\qquad
\xi'|_{\widehat{\rho(A)}}=\xi,
\]
and the following properties hold.
\begin{enumerate}
\item The full automorphism action of a general member is $(A',\rho',\xi')$.
\item There is a finite morphism
\[
\mathcal F^{\mathrm{GIT}}_{A,\rho,\xi}
\longrightarrow
\mathcal F^{\mathrm{GIT}}_{A',\rho',\xi'},
\]
and the two forgetful morphisms to $\calM$ have the same image.
\end{enumerate}
The action $(A',\rho',\xi')$ is unique up to equivalence.
\end{lem}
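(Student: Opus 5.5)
The plan is to deduce Lemma~\ref{lem:general-saturation} from the finiteness of the forgetful map together with a generic-stabilizer argument on the irreducible space $\PP\mathcal V_{A,\rho,\xi}^{\mathrm{sm}}$. For each smooth cubic form $F\in\mathcal V_{A,\rho,\xi}$, the automorphism group $\Aut(X_F)$ is finite (Matsumura--Monsky) and contains $\rho(A)$. Since there are only finitely many conjugacy classes of finite subgroups of $\PGL(6,\CC)$ of bounded order acting on smooth cubics (Yang--Yu--Zhu), and orders are bounded, the set of possible pairs $(\Aut(X_F),\text{character on }F)$ is finite up to conjugation. I would stratify $\PP\mathcal V^{\mathrm{sm}}$ by the conjugacy class of the pair $(\Aut(X_F)\supseteq\rho(A))$, where conjugation is taken by the group $N^\xi_\rho(A)$ together with the induced character. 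Each stratum is constructible, because it is the image of the corresponding semi-invariant subspace under a group action. Irreducibility then forces exactly one stratum to be dense; call its group $A'$, with embedding $\rho'$ and lifted character $\xi'$. By construction $\rho(A)\subseteq\rho'(A')$ and $\xi'$ restricts to $\xi$. This gives (1).

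For (2), I would use the fact that a dense open subset $U\subset\PP\mathcal V_{A,\rho,\xi}^{\mathrm{sm}}$ consists of forms lying in $\mathcal V_{A',\rho',\xi'}$ after conjugation. The closedness of the condition ``$F$ is $\xi'$-semi-invariant for $\widehat{\rho'(A')}$'' is used here. The delicate point is that the conjugating element may vary with $F$. To handle it, I fix one conjugacy representative, and then the set of $g\in\SL(6,\CC)$ with $g\cdot F\in\mathcal V_{A',\rho',\xi'}$ gives a correspondence. A cleaner route is to note that $\PP\mathcal V_{A',\rho',\xi'}\subseteq\PP\mathcal V_{A,\rho,\xi}$ directly, once $\rho(A)\subseteq\rho'(A')$ is fixed, and that $N^\xi_\rho(A)\cdot\PP\mathcal V_{A',\rho',\xi'}$ contains $U$. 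The inclusion then induces a map on GIT quotients. Finiteness follows from finiteness of both forgetful maps to $\calM$ (Yu--Zheng, Proposition~2.7). The equality of images holds because the images are closed in $\calM$-components and agree on dense opens.

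Uniqueness up to equivalence follows because $A'$ is read off as $\Aut(X)$ of a general member, and two choices differ by an element of $N$ conjugating one generic stabilizer to the other.

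I expect the main obstacle to be the constructibility and the direction of the finite morphism in (2). The natural map goes from the $A'$-space into the $A$-space, not the reverse, so the stated morphism $\mathcal F_{A}\to\mathcal F_{A'}$ must come from identifying both with their common image in $\calM$ via normalization. Since the statement is cited from Yu--Zheng, I would ultimately appeal to their construction for this identification.
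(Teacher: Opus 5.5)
Your proposal is correct in outline, but it takes a different route from the paper. The paper's proof is purely a citation. Yu--Zheng's Proposition~2.8, applied to the linear symmetry type $(\widehat{\rho(A)},\xi)$, produces the larger type satisfying their Condition~2.3, together with the finite morphism and the equality of images. Proposition~2.5 identifies the larger group with the full automorphism group of a general member. Uniqueness follows because the automorphism group at a general point of the common image is intrinsic.

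You instead prove (1) directly by a generic-stabilizer argument on the irreducible space $\PP\mathcal V^{\mathrm{sm}}_{A,\rho,\xi}$: there are finitely many $N^\xi_\rho(A)$-types of pairs $\Aut(X_F)\supseteq\rho(A)$, and exactly one is dense. You get the equality of images from closedness of images of finite maps, and you import from Yu--Zheng only the finiteness and normalization statement of Proposition~2.7. This shows where $(A',\rho',\xi')$ comes from, at the cost of re-proving what Proposition~2.8 packages. For (2), both arguments ultimately rest on Yu--Zheng.

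Two points need repair.

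First, the image of $N^\xi_\rho(A)\times\PP\mathcal V^{\mathrm{sm}}_{B',\xi'}$ is the locus where $\Aut(X_F)$ contains a conjugate of $B'$, not your stratum. Choose $B'$ of maximal order among the types whose locus contains a dense open set. Then remove the closures of the non-dense loci of larger types.

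Second, and more importantly, the inclusion $\PP\mathcal V_{A',\rho',\xi'}\subseteq\PP\mathcal V_{A,\rho,\xi}$ does not in general induce a morphism $\mathcal F^{\mathrm{GIT}}_{A',\rho',\xi'}\to\mathcal F^{\mathrm{GIT}}_{A,\rho,\xi}$. The reason is that $N^{\xi'}_{\rho'}(A')$ need not normalize $\widehat{\rho(A)}$. At the level of quotients, the natural map actually goes in the stated direction. A general $F$ is moved into $\PP\mathcal V_{A',\rho',\xi'}$ by some $n\in N^\xi_\rho(A)$. The resulting point of $\mathcal F^{\mathrm{GIT}}_{A',\rho',\xi'}$ does not depend on $n$: two such translates differ by an element that normalizes the exact stabilizer $\rho'(A')$ and preserves $\xi'$, i.e., by an element of $N^{\xi'}_{\rho'}(A')$.

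To extend this to a finite morphism, do not identify both spaces with the normalization of the common image $Z$. Only the saturated space $\mathcal F^{\mathrm{GIT}}_{A',\rho',\xi'}$ is known to be that normalization, by Proposition~2.7. Instead, use that $\mathcal F^{\mathrm{GIT}}_{A,\rho,\xi}$ is normal and irreducible (a quotient of an open subset of projective space by a reductive group) and maps finitely onto $Z$. The universal property of normalization then factors this map through $\mathcal F^{\mathrm{GIT}}_{A',\rho',\xi'}$. The factorization is finite because the composite is finite and the normalization map is separated.
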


\begin{proof}
Apply \cite[Propositions~2.5 and 2.8]{yu2020moduli} to the linear symmetry type defined by $(\widehat{\rho(A)},\xi)$. Proposition~2.8 constructs a larger symmetry type satisfying Condition~2.3, gives the finite morphism, and shows that the two images in the moduli space coincide. Proposition~2.5 shows that the larger group is the full automorphism group of a general member. Passing to the projective quotient gives the asserted action. At a general point of the common image, the full projective automorphism action is intrinsic. This proves uniqueness up to equivalence.
\end{proof}

\begin{rmk}\label{rmk:GIT-saturation}
The cited results are formulated for linear symmetry types. The lemma records only the projective, componentwise consequence used below.
\end{rmk}

We apply the following notions componentwise to a fixed connected family.

\begin{defn}\label{def:sat_id}
\begin{enumerate}
\item A family associated with $(A,\rho,\xi)$ is \emph{saturated} if, for a general member $X$, the subgroup $\rho(A)$ equals $\Aut(X)$ after a linear change of coordinates compatible with $\xi$. It is \emph{symplectically saturated} if the same statement holds for $\rho(A^s)$ and $\Aut^s(X)$.
\item Two actions $(A_i,\rho_i,\xi_i)$ are \emph{equivalent} if there exist an isomorphism $\theta:A_1\xrightarrow{\sim}A_2$ and $h\in\GL(6,\CC)$ such that
\[
\rho_2(\theta(g))=\overline h\rho_1(g)\overline h^{-1}
\]
for every $g\in A_1$. Conjugation by $h$ identifies the inverse images of the two projective groups in $\SL(6,\CC)$. We require
\[
\xi_2(h\widehat g h^{-1})=\xi_1(\widehat g)
\]
for every $\widehat g\in\widehat{\rho_1(A_1)}$.
\item Let $\tau_i:A_i\hookrightarrow\widetilde O(\Lambda_0)$ be the induced lattice actions. The two actions are \emph{lattice-theoretically equivalent} if there exist $\theta:A_1\xrightarrow{\sim}A_2$ and $h\in O(\Lambda_0)$ such that $\chi_1=\chi_2\circ\theta$ and
\[
\tau_2(\theta(g))=h\tau_1(g)h^{-1}
\]
for all $g\in A_1$.
\end{enumerate}
\end{defn}

Equivalent actions are lattice-theoretically equivalent. The converse need not hold. Indeed, a lattice-theoretic equivalence is not required to come from a linear change of coordinates. 

Complex conjugation sends an action and its defining cubics to their entrywise conjugates. It preserves the dimension, the non-symplectic index, and both saturation properties. In the real-character case, it exchanges the families arising from the two type IV components. The two families may still be equivalent in the above sense.

Saturatedness implies symplectic saturatedness. The next statement applies Lemma~\ref{lem:general-saturation} to a connected family arising from a group action. It is the componentwise saturation statement used in \cite[\S2]{yu2020moduli}.

\begin{prop-def}\label{prop-def:saturation}
Every connected family as above has a unique saturated action, up to equivalence, with the same image in $\calM$. It is called the \emph{saturation} of the family.
\end{prop-def}

\begin{proof}
Apply Lemma~\ref{lem:general-saturation} to the corresponding symmetry type and restrict the common image to the chosen connected family. The resulting action is the full automorphism action of a general member. It is unique up to equivalence because that full action is intrinsic. The corresponding GIT moduli space is the normalization of its image by \cite[Proposition~2.7]{yu2020moduli}.
\end{proof}

\begin{prop}\label{prop:lattice-equivalence}
Lattice-theoretically equivalent connected families have the same dimension and non-symplectic index. Their generic automorphism groups are isomorphic, and they have the same saturation properties. If the index is at least $3$, each lattice-theoretic equivalence class contains one equivalence class of connected families. If the index is $1$ or $2$, it contains at most two. In the latter case, complex conjugation exchanges the two possible classes. They coincide when either family is equivalent to its complex conjugate.
\end{prop}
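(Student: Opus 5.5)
We will prove the four assertions by transporting everything through the period map, using Theorem~\ref{theorem: global torelli} and Theorem~\ref{thm:global_Torelli_image}. Fix lattice-theoretically equivalent connected families $\calF_1,\calF_2$ with actions $(A_i,\rho_i,\xi_i)$, and let $\theta$, $h\in O(\Lambda_0)$ be as in Definition~\ref{def:sat_id}(3). First, since $\chi_1=\chi_2\circ\theta$ and $h\tau_1(g)h^{-1}=\tau_2(\theta(g))$, the map $h_\CC$ sends the eigenspace $(\Lambda_{0,\CC})^{\chi_1}$ for $\tau_1$ isomorphically onto $(\Lambda_{0,\CC})^{\chi_2}$ for $\tau_2$. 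It also preserves whether $\chi$ is real, since $\chi_1$ and $\chi_2$ have the same image. Proposition~\ref{prop:moduli_dim} then gives equal dimensions. The index $[A_i:A_i^s]=|\chi_i(A_i)|$ is the same for both families.

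Next we identify the images. By the Remark following Theorem~\ref{thm:global_Torelli_image}, the period image of $\calF_i$ is the image in $((\DD\sqcup\overline\DD)\setminus(\calH_2\cup\calH_6))/O(\Lambda_0)$ of a component of the arrangement complement in $\PP(\Lambda_{0,\CC})^{\chi_i}$. Since $h\in O(\Lambda_0)$ preserves short and long roots, it maps $\calH_2\cup\calH_6$ to itself. Hence $h$ carries the $\tau_1$-period locus onto the $\tau_2$-period locus.

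For a general point $x$, the group of $g\in\widetilde O(\Lambda_0)$ (or $O(\Lambda_0)$, modulo $\pm\id$) fixing the line $x$ is the generic automorphism group, by the Corollary to Theorem~\ref{theorem: global torelli}. Conjugation by $h$ identifies these stabilizers, together with their characters. This gives isomorphic generic automorphism groups, and shows that $\tau_1(A_1)$ equals the full stabilizer if and only if $\tau_2(A_2)$ does, so saturation is preserved. The same argument applied to the kernels of $\chi_i$ preserves symplectic saturation.

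For the count, the key issue is which component of the period domain $h$ lands in. If $\chi$ is non-real (index $\ge3$), the ball period domain is connected, and complex conjugation moves it to the $\overline\chi$-eigenspace, which is ruled out by $\chi_1=\chi_2\circ\theta$. After composing $h$ with $-\id$ if necessary, we may take $h\in O^+(\Lambda_0)$, and then modulo $\pm\id$ in $\Gamma$. Then $h$ maps the ball in $\DD$ to the ball in $\DD$, the images in $\calM$ coincide, and Proposition-Definition~\ref{prop-def:saturation} (uniqueness of saturation) gives one equivalence class. If $\chi$ is real (index $1$ or $2$), the type IV domain has two components exchanged by conjugation. The isometry $h$ sends the chosen component either to a $\Gamma$-translate of the other family's component, giving equivalent families, or to its conjugate, giving the complex-conjugate family. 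Hence there are at most two classes, exchanged by complex conjugation, and they coincide when a family is equivalent to its conjugate.

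The main obstacle is the last step: passing from an equality of images in $\calM$ to an actual equivalence of actions via some $h\in\GL(6,\CC)$. We would handle it by applying Theorem~\ref{theorem: global torelli} at a general common member $X$. The Hodge isometry produces a linear isomorphism conjugating $\rho_1(A_1)$ to $\rho_2(A_2)$ inside $\Aut(X)$, after first replacing each family by its saturation. Compatibility of $\xi$ with $\chi$ then follows from the character on $H^{3,1}$ being determined by $\xi$ and the determinant.
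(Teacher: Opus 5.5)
Your proposal follows essentially the same route as the paper's proof: you transport the character eigenspaces and root arrangements by $h$, identify the generic Hodge stabilizers through the global Torelli theorem, and count classes by the connectedness of the ball versus the two type IV components. One slip to fix: since $-\id_{\Lambda_0}\in O^+(\Lambda_0)$, composing with $-\id_{\Lambda_0}$ cannot move $h$ into $O^+(\Lambda_0)$; the paper uses it instead to put $h$ in $\widetilde O(\Lambda_0)$, so that $h$ extends to $\Lambda$ fixing the square of the hyperplane class, and applying the global Torelli theorem to $h$ itself at a general period then gives the linear equivalence of the actions directly, without passing through saturations.
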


\begin{proof}
Let $(\theta,h)$ define the lattice-theoretic equivalence. Since $O(\Lambda_0)=\langle\widetilde O(\Lambda_0),-\id_{\Lambda_0}\rangle$, after replacing $h$ by $-h$ if necessary, we may assume that $h\in\widetilde O(\Lambda_0)$. This replacement changes neither its conjugation action nor its action on the projectivized period domain. The isometry $h$ then extends to an isometry of $\Lambda$ fixing the square of the hyperplane class. It identifies the relevant character eigenspaces and their root hyperplane arrangements. Hence it identifies the corresponding arrangement complements in $\DD\sqcup\overline{\DD}$, possibly exchanging the two type IV components. By Theorem~\ref{thm:global_Torelli_image} and \cite[Theorem~1.1]{yu2020moduli}, these arrangement complements describe the corresponding families in $\calM$.

At a general period, conjugation by $h$ identifies the integral Hodge stabilizers. Theorem~\ref{theorem: global torelli} identifies these stabilizers with the projective automorphism groups of the cubic fourfolds. It follows that the generic automorphism groups are isomorphic and that the saturation properties agree.

If the index is at least $3$, the character is non-real and the period domain is a connected complex ball. Its arrangement complement therefore gives one equivalence class of connected families. If the index is $1$ or $2$, the character is real and the type IV domain has two components. Thus there are at most two classes, and complex conjugation exchanges them. If either class is equivalent to its conjugate, the two classes coincide.
\end{proof}

Thus the enumeration reduces the determination of the possible groups and indices to lattice-theoretic equivalence. For indices $1$ and $2$, deciding whether the two conjugate type IV components define equivalent families is a separate question. It is not needed for the result of this paper and will not be addressed below.

\section{Lattice Arguments}\label{sec:lattices}

\subsection{Symplectic Families}

Let $X$ be a smooth cubic fourfold and let $G<\Aut^s(X)$. Write
\[
    T=\Lambda_0^G,\qquad S=S_G(X)=(\Lambda_0^G)^\perp_{\Lambda_0}.
\]
When $G=\Aut^s(X)$, we call the connected family containing $X$ a \emph{symplectic family}, and we call $(S,T)$ its lattice pair. Its dimension is $20-\rank(S)$. The Laza--Zheng classification gives the possible lattices $S$ and identifies $G$ with
\[
    G\cong\widetilde O(S).
\]
The action of this group extends to $\Lambda_0$ by acting trivially on $T$ \cite{laza2022automorphisms}.

An \emph{embedded lattice datum} additionally records the primitive embedding $\iota:S\oplus T\hookrightarrow\Lambda_0$. We use the following consequence of the proof of the Laza--Zheng criterion.

\begin{lem}\label{lem:LZ-root-free}
For an embedded lattice datum occurring in the Laza--Zheng classification, the embedded coinvariant lattice $S\subset\Lambda_0$ contains neither short nor long roots. The absence of short roots follows from the Leech-pair condition; the absence of long roots follows from the argument in the proof of \cite[Theorem~4.5]{laza2022automorphisms}.
\end{lem}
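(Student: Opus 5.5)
The plan is to argue by contradiction, separately for the two root types, using the geometric input that $S$ is the coinvariant lattice of $G=\Aut^s(X)$ for a smooth cubic fourfold $X$. Because the period of $X$ lies in $T_\CC=S^\perp\otimes\CC$, every vector of $S$ is orthogonal to the period line $H^{3,1}(X)$. A short root $r\in S$ would therefore place the period of $X$ on $\calH_6$, and a long root would place it on $\calH_2$. Theorem~\ref{thm:global_Torelli_image} forbids both, since the period of a smooth cubic fourfold lies in the complement $\DD\backslash(\calH_2\cup\calH_6)$. This observation already proves the lemma for every datum arising from an actual smooth cubic fourfold.

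What remains is to check that this matches the two lattice-theoretic mechanisms used in \cite{laza2022automorphisms}, since the lemma attributes each half to a specific one.

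For short roots, I would recall the Leech-pair condition. It says that $S(2)$, or the appropriate rescaling, embeds primitively into the Leech lattice, or equivalently that $S$ is negative definite with no vectors of norm $2$ in the relevant sign convention. Hence $S$ contains no element $r$ with $\langle r,r\rangle=2$. This is a pure norm condition on $S$ and does not depend on the embedding $\iota$.

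For long roots, the condition $\langle r,\Lambda_0\rangle=3\ZZ$ depends on the embedding. The argument in \cite[Theorem~4.5]{laza2022automorphisms} proceeds as follows. A norm-$6$ vector $r\in S$ with $\langle r,\Lambda_0\rangle=3\ZZ$ gives $r/3\in\Lambda_0^\vee$, representing a generator of $A_{\Lambda_0}\cong\ZZ/3\ZZ$. Since $G$ acts trivially on $A_{\Lambda_0}$ (because $G<\widetilde O(\Lambda_0)$), the classes $g(r)/3$ and $r/3$ agree modulo $\Lambda_0$, so $g(r)-r\in 3\Lambda_0\cap S=3S$ by primitivity. One then uses that $S$ has no $G$-invariants, so $\sum_{g} g(r)=0$, together with the norm bound from negative definiteness, to derive a contradiction. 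Concretely, $\langle g(r)-r,g(r)-r\rangle$ would have to be divisible by $9$ and at most $24$ in absolute value, which forces $g(r)=r$ or a $3$-multiple of a norm-$\le 2$ vector; the short-root case already excludes the latter. Hence $r$ would be $G$-fixed, contradicting $S^G=0$.

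The main obstacle I expect is this long-root step, specifically the bookkeeping over which values of $g(r)-r$ are possible. Handling it requires the sign conventions and the precise form of the Leech-type condition used in \cite{laza2022automorphisms}. If the case analysis becomes messy, I would fall back on the geometric Torelli argument from the first paragraph, which covers every datum that actually occurs.
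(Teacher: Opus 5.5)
Your lattice-theoretic half follows the paper's route. The paper gives no proof beyond attributing the short-root case to the Leech-pair condition and the long-root case to the argument in the proof of \cite[Theorem~4.5]{laza2022automorphisms}, and your long-root argument correctly supplies those details. Indeed $g(r)-r\in 3S$. Since $S$ is definite and even, $\langle g(r)-r,g(r)-r\rangle=12-2\langle g(r),r\rangle$ lies in $[0,24]\cap 18\ZZ=\{0,18\}$. The value $18$ would make $(g(r)-r)/3$ a short root in $S$, so $g(r)=r$ for every $g\in G$ and hence $r\in S^G=0$.

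The identity $\sum_g g(r)=0$ is not needed. You can also avoid $A_{\Lambda_0}$ altogether: $\langle r/3,S\rangle\subset\ZZ$ gives $r/3\in S^\vee$, and $G\cong\widetilde O(S)$ acts trivially on $A_S$, so $g(r)-r\in 3S$ directly.

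Two corrections to your description of the Leech-pair condition. First, no rescaling such as $S(2)$ is involved; if $S(2)$ embedded in the Leech lattice, norm-$2$ vectors would not be excluded. Second, with the paper's normalization (roots of norm $2$ and $6$ in $\Lambda_0$ of signature $(20,2)$), $S$ is positive definite. The condition is that $S$ is even, positive definite, has no vectors of norm $2$, $G$ acts trivially on $A_S$, and $S^G=0$. These are exactly the properties your argument uses.

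Your Torelli argument is a genuinely different and much shorter route. It is correct for data realized by a smooth cubic fourfold $X$ with $\Aut^s(X)=G$: the period spanning $H^{3,1}(X)$ lies in $T_\CC$, hence is orthogonal to $S$. A root in $S$ would put it on $\calH_6\cup\calH_2$, contradicting Theorem~\ref{thm:global_Torelli_image}.

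Its limitation is scope. The lemma is meant for every primitive embedding $S\oplus T\hookrightarrow\Lambda_0$ of a Laza--Zheng pair. In Proposition~\ref{prop:ST-families}, root-freeness is exactly what is used to conclude that each such embedded datum defines a nonempty family. For that use the geometric argument is circular, so your proposed fallback would not suffice. The lattice argument, which works for an arbitrary primitive embedding, is the one that carries the weight.
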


We recall the following input from the Laza--Zheng and Koike classifications. There are $46$ abstract lattice pairs $(S,T)$ and $48$ connected symplectic families. Two of the lattice pairs give two connected families; every other pair gives one.

\begin{prop}\label{prop:ST-families}
For each of the $46$ lattice pairs $(S,T)$, the number of equivalence classes of connected symplectic families equals the number of lattice-theoretic equivalence classes. This number is one, except for $G\cong M_{10}$ and $G\cong S_{3,3}$, where it is two. Consequently, the Laza--Zheng data give $48$ connected symplectic families in total.
\end{prop}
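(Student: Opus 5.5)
The plan is to compare the two counts pair by pair, using Proposition~\ref{prop:lattice-equivalence} to translate between geometric and lattice-theoretic equivalence. Fix one of the $46$ pairs $(S,T)$. A connected symplectic family with this pair is determined, up to lattice-theoretic equivalence, by the $\widetilde O(\Lambda_0)$-orbit of the embedded datum $\iota:S\oplus T\hookrightarrow\Lambda_0$, taken together with the action $G\cong\widetilde O(S)$ extended trivially on $T$. Here the character is trivial, and the index is $1$.

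\textbf{Step 1: lattice-theoretic classes are glue classes.} By Theorem~\ref{thm:double_equivariant_discriminant}, these orbits correspond to anti-isometries $\gamma:H_S\to H_T$ whose graph yields an overlattice isometric to $\Lambda_0$. They are counted modulo the double quotient $\overline{O(T)}\backslash\{\gamma\}/\overline{N}$. Here $N$ is the image of the normalizer of $\widetilde O(S)$ in $O(S)$, which equals $O(S)$ since $\widetilde O(S)$ is normal. Two elements of $\widetilde O(\Lambda_0)$ versus $O(\Lambda_0)$ differ by $-\id$, which is harmless. For each pair I would compute this double quotient directly, since $S$ is negative definite of rank $\ge 14$ and $T$ has small rank. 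Alternatively, I would cite the corresponding enumeration in Laza--Zheng~\cite{laza2022automorphisms}, which already lists the embeddings. This yields one class except for $M_{10}$ and $S_{3,3}$, where there are two. For each datum I would also check the root-free condition of Lemma~\ref{lem:LZ-root-free}, together with the condition that $T$ avoids $\calH_2\cup\calH_6$ generically, so that each class is realized by some smooth cubic fourfold. This realization follows from Theorem~\ref{thm:global_Torelli_image}.

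\textbf{Step 2: from lattice classes to geometric classes.} By Proposition~\ref{prop:lattice-equivalence} with index $1$, each lattice class contains at most two equivalence classes of connected families, and these are exchanged by complex conjugation. So the geometric count is at least the lattice count, and it could be at most double. To obtain equality I must show that each family is equivalent to its complex conjugate.

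This is the main obstacle, and I would try Koike's explicit descriptions~\cite{KOIKE202512,KOIKE2026} of the projective representation $\rho$ and the invariant cubic forms for each of the $48$ families. The first check is whether the representation $\rho$ of $G$ on $\CC^6$, up to projective equivalence and a twist by the automorphism $\theta$, is conjugate to its complex conjugate. The second check is whether the space of invariant cubics is defined over $\QQ$, or over a field in which conjugation is realized by an outer automorphism of $G$. When both hold, conjugation maps the family to an equivalent one. For the groups where $\rho$ has non-real character, such as $3$-groups and $M_{10}$, I would find $\theta\in\Aut(G)$ with $\overline{\rho}\cong\rho\circ\theta$ projectively, which Koike's tables should provide.

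\textbf{Step 3: conclusion.} Combining Steps 1 and 2, the geometric and lattice counts agree for each pair. Summing over the pairs gives $44\cdot1+2\cdot2=48$ connected symplectic families.
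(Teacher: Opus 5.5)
Your reduction is sound, but it runs in the opposite direction from the paper's proof. You compute the lattice side first, via Nikulin double cosets for all $46$ pairs. You then try to prove, representation by representation, that each family is equivalent to its complex conjugate.

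The paper instead reads the geometric count directly from Koike's classification \cite{KOIKE202512,KOIKE2026}, which is complete up to projective equivalence, and then compares it with the lattice side. By \cite[Corollary~4.9]{FWZ26}, the two representations for $M_{10}$ and for $S_{3,3}$ give inequivalent embedded lattice data, and every other pair carries a single entry. So distinct geometric classes have distinct lattice classes. By \cite[Theorem~4.5 and its proof]{laza2022automorphisms}, every embedded datum is root-free and realized, so every lattice class is hit.

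This also removes your ``main obstacle'' without any character analysis. Complex conjugation preserves $G$ and the lattice-theoretic class, so the conjugate of a family is again an entry of Koike's list over the same lattice class. Since that entry is unique, the family is self-conjugate. Your Step~1 computation is then only an independent check.

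Your route would give an independent lattice-side verification of the embedding counts. The cost is $46$ double-coset computations plus $48$ explicit conjugacy checks whose outcome you only anticipate (``Koike's tables should provide $\theta$'').

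Two smaller remarks. First, the fallback of citing Laza--Zheng for the embedding counts is not how the paper obtains them: the doubling for $S_{3,3}$ is detected via Koike and \cite{FWZ26}. Second, your separate check on fields of definition is redundant. An equivalence between $(\rho,\xi)$ and $(\overline\rho,\overline\xi)$ already carries the semi-invariant cubic space onto its conjugate.
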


\begin{proof}
Koike's classification and its corrigendum list the projective representations and invariant cubic spaces up to projective equivalence \cite{KOIKE202512,KOIKE2026}. Their comparison with the Laza--Zheng data is given in \cite[Corollary~4.9]{FWZ26}. The two representations for each of $M_{10}$ and $S_{3,3}$ give inequivalent embedded lattice data, whereas every other lattice pair occurs once. Moreover, \cite[Theorem~4.5 and its proof]{laza2022automorphisms} shows that every such embedded datum is root-free and defines a nonempty family. Hence the geometric and lattice-theoretic counts agree.
\end{proof}

\subsection{Non-symplectic Actions}

We first explain why it is enough to compute the action on the coinvariant lattice.

\begin{lem}\label{lem:faithful-S-action}
Let $X$ be a smooth cubic fourfold, and let
\[
S=S_{\Aut^s(X)}(X).
\]
If $\rank(S)\geq13$, then $\Aut(X)$ preserves $S$, and the restriction homomorphism
\[
r_S:\Aut(X)\longrightarrow O(S)
\]
is injective.
\end{lem}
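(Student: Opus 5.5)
The plan has two parts: show that $\Aut(X)$ preserves $S$, and show that the kernel of $r_S$ is trivial.

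\emph{Invariance of $S$.} Write $G=\Aut^s(X)$. Since $G$ is the kernel of $\chi$, it is normal in $\Aut(X)$. For $a\in\Aut(X)$ and $x\in T=\Lambda_0^G$, every $g\in G$ satisfies
\[
g(ax)=a(a^{-1}ga)x=ax,
\]
so $aT=T$. Because $a$ acts by isometries on $\Lambda_0$ (it fixes $h^2$), it also preserves $S=T^\perp_{\Lambda_0}$. Hence $r_S$ is well defined.

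\emph{Injectivity.} Let $a\in\ker r_S$, so $a$ acts trivially on $S$. I first show that $a$ is symplectic. The period line $\omega=H^{3,1}(X)$ lies in $T_\CC$, because $G$ acts trivially on it. The group $\Aut(X)$ acts on $H^{3,1}$ through $\chi$, and $\langle a\rangle$ is finite. Consider the sublattice $\Lambda_0^{\langle G,a\rangle}\subset T$. Since $a|_S=\id$, the coinvariant lattice of $\langle G,a\rangle$ is
\[
S_{\langle G,a\rangle}=S+S_{\langle a\rangle}(T).
\]
Suppose $a$ is not symplectic, so $\chi(a)\ne1$. Then $\omega$ lies in a nontrivial eigenspace of $a$ on $T_\CC$, hence $\omega\perp T^{a}$. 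The elements of $\Lambda_0^a\supset S$ are therefore algebraic classes. In particular $\Lambda_0^{a}$ contains $S$ and has rank at least $\rank(S)\ge 13$, and $\Lambda_0^a$ lies in the negative-definite part of $H^{2,2}$ (signature $(20,2)$ with the convention that the period directions carry the sign $-2$; the relevant algebraic part is definite).

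I would then rule this out using the order of $a$. The idea is that $a$ fixes a large definite lattice, so its action on $\Lambda_0$ is concentrated on $T$, whose rank is at most $22-13=9$. The constraint is that the fixed lattice of a non-symplectic automorphism of a cubic fourfold is bounded. From the earlier paper (the general restrictions on non-symplectic automorphisms in \cite{FWZ26}), a non-symplectic automorphism of prime order $p$ has a fixed lattice of bounded rank, or else its fixed locus forces the cubic to be singular. I would reduce to prime order by replacing $a$ with a suitable power $a^k$ that still has $\chi(a^k)\ne1$ of prime order while still acting trivially on $S$. Then I would invoke the known classification of prime-order non-symplectic automorphisms of cubic fourfolds: $p\in\{2,3,5,7,11\}$, with explicit fixed-lattice ranks, all of which are less than $13$ for the non-symplectic ones. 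Completing this step is the main obstacle: one must check that every invariant lattice of a non-symplectic prime-order automorphism has rank $\le12$. I expect this to come from the fact that such an automorphism acts on $H^{3,1}$ nontrivially and on the ball or type IV quotient of its eigenspace with dimension at least $1$, which forces $\rank T^{\perp a}$ to be large.

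If instead $a$ is symplectic, then $a\in G$ acts trivially on $T$ by definition and trivially on $S$ by assumption. Thus $a$ is trivial on the finite-index sublattice $S\oplus T$, hence trivial on $\Lambda_0$, and it fixes $h^2$. By Theorem~\ref{theorem: global torelli} (the uniqueness part), $a=\id$. This proves that $r_S$ is injective.
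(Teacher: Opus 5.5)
\emph{Correct parts.} Your invariance argument (normality of $G=\Aut^s(X)$ in $\Aut(X)$) is correct. So is your symplectic case: an element of $G$ trivial on $S$ is trivial on the finite-index sublattice $S\oplus T\oplus\langle h^2\rangle$ of $\Lambda$, hence is the identity by Theorem~\ref{theorem: global torelli}. These steps are more self-contained than the paper, which quotes \cite[Lemma~6.4]{laza2022automorphisms} and the faithfulness of $\widetilde O(S)$ on $S$.

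\emph{The gap: the non-symplectic case.} This is the real content of the lemma. You reduce it to the claim that every non-symplectic automorphism $b$ of prime order has $\rank\Lambda_0^b\le 12$, equivalently coinvariant rank $\ge 10>9\ge\rank(T)$. You leave this unproved, and the mechanism you suggest cannot give it. A ball or type IV domain of dimension $\ge1$ only forces the non-invariant part of $\Lambda_0$ to have rank $\ge 4$ for $p=3$ (two conjugate eigenspaces of dimension $\ge2$) and $\ge3$ for $p=2$. That yields only $\rank\Lambda_0^b\le 18$ or $19$.

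The bound $12$ is sharp and depends on the actual minimal dimensions of the equivariant families. For $b=\operatorname{diag}(\zeta_3,\zeta_3,1,1,1,1)$, the invariant cubics are $C_1(x_0,x_1)+C_2(x_2,\dots,x_5)$ and the family has dimension $4$. So the two non-trivial eigenspaces have dimension $5$ and $\rank\Lambda_0^b=12$. To complete your route you would need to run through all weight patterns and semi-invariant characters of order-$2$ and order-$3$ elements of $\PGL(6,\CC)$; only $p=2,3$ can divide a non-symplectic index, by the YYZ bounds. You would then check smoothness and verify that every non-symplectic family has dimension $\ge 10$ for $p=2$ and $\ge 4$ for $p=3$. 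That case analysis is the missing argument.

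\emph{The paper's route.} The paper avoids all of this by citing \cite[Proposition~6.5]{laza2022automorphisms}: for $\rank(S)\ge 13$ the induced map $\Aut(X)/\Aut^s(X)\to O(q_S)$ is injective. Hence an automorphism trivial on $S$ (so trivial on $q_S$) is symplectic, and your symplectic case finishes the proof.

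\emph{Two smaller points.} First, a power $a^k$ with $\chi(a^k)$ of prime order need not itself have prime order. Do the symplectic case first: $a^{\ord\chi(a)}$ is symplectic and trivial on $S$, hence the identity, so $a^{\ord(a)/p}$ has order exactly $p$ and nontrivial character. Second, with $\Lambda_0$ of signature $(20,2)$, the algebraic part $\Lambda_0\cap H^{2,2}$ is positive definite, not negative definite.
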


\begin{proof}
By \cite[Lemma~6.4]{laza2022automorphisms}, $\Aut(X)$ preserves $S$. When $\rank(S)\geq13$, \cite[Proposition~6.5]{laza2022automorphisms} shows that the homomorphism
\[
\Aut(X)/\Aut^s(X)\longrightarrow O(q_S)
\]
induced by the action on $S$ is injective. Therefore an automorphism acting trivially on $S$ belongs to $\Aut^s(X)$. Since $\Aut^s(X)\cong\widetilde O(S)$ acts faithfully on $S$, this automorphism is the identity.
\end{proof}

Fix a smooth cubic fourfold $X$ in a symplectic family with lattice pair $(S,T)$, and set $m=m(X)$. Choose $f\in\Aut(X)$ whose image generates $\Aut(X)/\Aut^s(X)\cong\mu_m$, and write $f_T=f|_T$. After replacing $f$ by a power relatively prime to $m$, we may assume that $f$ acts on $H^{3,1}(X)$ by $\zeta_m$.

Set
\[
P=\ker\Phi_m(f_T),\qquad K=P^\perp_{\Lambda_0},\qquad f_P=f|_P.
\]
Then $P_\RR$ has signature $(*,2)$. Moreover, $K\subset H^{3,1}(X)^\perp$ contains neither short nor long roots.

Conversely, the image theorem for the cubic period map and strong global Torelli give the following construction.

\begin{prop}\label{prop:construct-family}
Suppose $f\in\widetilde O(\Lambda_0)$ preserves $S$ and $T$, the restriction $f_T$ has order $m$, and
\begin{enumerate}
\item $\ker(f_T+f_T^{-1}-(\zeta_m+\zeta_m^{-1})\id)\subset T_\RR$ has signature $(*,2)$;
\item $K=(\ker\Phi_m(f_T))^\perp_{\Lambda_0}$ contains neither short nor long roots.
\end{enumerate}
Then the $\zeta_m$-eigenspace contains periods of smooth cubic fourfolds. Each connected component of the resulting arrangement complement determines a family carrying the action of
\[
A=\langle f,\widetilde O(S)\rangle.
\]
Its dimension is
\[
\begin{cases}
\dim\ker(f_T-\zeta_m\id)-2,&m=1,2,\\
\dim\ker(f_T-\zeta_m\id)-1,&m\geq3.
\end{cases}
\]
\end{prop}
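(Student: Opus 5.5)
The argument runs through the type IV/ball period domain attached to the $\zeta_m$-eigenspace. We then use Theorem~\ref{thm:global_Torelli_image} to produce smooth cubic fourfolds, and Theorem~\ref{theorem: global torelli} to realize $f$ and $\widetilde O(S)$ as automorphisms.

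\textbf{Step 1: the period domain is nonempty.} Let $V=\ker(f_T-\zeta_m\id)\subset T_\CC$. Since $T\perp S$, this equals the $\zeta_m$-eigenspace of $f$ in $\Lambda_{0,\CC}$ intersected with $T_\CC$. For $m\ge3$, the real space $\ker(f_T+f_T^{-1}-(\zeta_m+\zeta_m^{-1}))\otimes\CC$ splits as $V\oplus\overline V$, and the hermitian form $\langle x,\overline y\rangle$ on $V$ has signature $(*,1)$ because the real summand has signature $(*,2)$. Hence $\{x\in V:\langle x,\overline x\rangle<0\}$ is a nonempty complex ball $B$, and $\langle x,x\rangle=0$ holds automatically since $\zeta_m^2\ne1$. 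For $m=1,2$ the eigenspace is real of signature $(*,2)$, and its isotropic lines with $\langle x,\overline x\rangle<0$ form a type IV domain with two components. In both cases $B\subset\DD\sqcup\overline\DD$.

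\textbf{Step 2: the arrangement is proper.} Let $r$ be a short or long root with $B\subset r^\perp$. Then $r\perp V$, and since $r$ is real, also $r\perp\overline V$. As $\Phi_m(f_T)$ has Galois-conjugate eigenvalues, the span of all Galois conjugates of $V$ is $P_\CC$ with $P=\ker\Phi_m(f_T)$. The orthogonality to $P$ follows from the rationality of $r$: the orthogonal complement of the rational vector $r$ is defined over $\QQ$. If it contains $V$, it contains all $\mathrm{Gal}$-conjugates of $V$, hence $P$. So $r\in K$, contradicting hypothesis (2). Consequently each root hyperplane meets $B$ in a proper hyperplane section. Since the root arrangement is locally finite on $\DD$, the complement $B^\circ=B\setminus(\calH_2\cup\calH_6)$ is a nonempty open dense set, and each connected component is what we call a family.

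\textbf{Step 3: realizing the action geometrically.} For $[x]\in B^\circ$ (taking $x\in\DD$, after conjugation if needed), Theorem~\ref{thm:global_Torelli_image} gives a smooth cubic fourfold $X$ with period $[x]$. Both $f$ and each element of $\widetilde O(S)$, extended by the identity on $T$, lie in $\widetilde O(\Lambda_0)$. Hence they extend to isometries of $\Lambda$ fixing $h^2$. Each preserves the line $\CC x$: $f$ acts on it by $\zeta_m$, and $\widetilde O(S)$ acts trivially because $x\in T_\CC$. They therefore preserve the Hodge decomposition. For $m=1,2$ we also need them to preserve the component $\DD$, or to be adjusted by $-\id$. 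If some element swaps $\DD$ and $\overline{\DD}$, it cannot fix the line $[x]$, so in fact no adjustment is required. By Theorem~\ref{theorem: global torelli} these Hodge isometries are induced by automorphisms of $X$, giving $A=\langle f,\widetilde O(S)\rangle<\Aut(X)$. Varying $[x]$ in a component of $B^\circ$ and identifying it with a family in $\calM$ (via \cite[Theorem~1.1]{yu2020moduli}) gives the family carrying the $A$-action.

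\textbf{Step 4: dimension.} By Proposition~\ref{prop:moduli_dim}, the dimension is $\dim V-1$ for the ball ($m\ge3$) and $\dim V-2$ for the type IV case ($m\le2$), with $V=\ker(f_T-\zeta_m\id)$. For $m=1$ we take $V=T_\CC$, and for $m=2$ we take the $(-1)$-eigenspace.

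The main obstacle is Step 2: showing that condition (2) is exactly what prevents the eigenspace from lying in a root hyperplane, and that a nonempty complement really yields smooth cubics. The Galois-closure argument above should handle the first part. The second part is the image theorem itself, used together with care about which of the two type IV components contains the period.
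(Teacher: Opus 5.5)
Your proposal is correct and follows the same route as the paper's proof. The signature condition gives a ball or type IV domain in the $\zeta_m$-eigenspace. A root orthogonal to that domain is, by rationality and Galois conjugation, orthogonal to all of $P=\ker\Phi_m(f_T)$ and so lies in $K$. The image theorem (Theorem~\ref{thm:global_Torelli_image}) then yields smooth cubics, strong Torelli realizes $f$ and $\widetilde O(S)$ as automorphisms, and the dimension comes from Proposition~\ref{prop:moduli_dim}.

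Your write-up only makes explicit points the paper leaves implicit, such as the hermitian signature $(*,1)$ on the eigenspace and why the choice of type IV component causes no trouble for $m\le 2$.
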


\begin{proof}
The signature condition defines either a type IV domain or a complex ball in the $\zeta_m$-eigenspace. Suppose that this domain is contained in a hyperplane orthogonal to a rational root. This root is orthogonal to the $\zeta_m$-eigenspace and, by Galois conjugation, to every primitive $m$-eigenspace. It therefore lies in $K$, contrary to condition (2). A general period avoids both forbidden arrangements. By Theorem~\ref{thm:global_Torelli_image}, it is the period of a smooth cubic fourfold.

Since $f\in\widetilde O(\Lambda_0)$, it extends to the cubic lattice and fixes $h^2$. The same is true for the extensions of $\widetilde O(S)$. All these isometries preserve the Hodge structure, so strong global Torelli realizes them as unique automorphisms. The dimension formula follows from Proposition~\ref{prop:moduli_dim}.
\end{proof}

Given an embedded pair $S\oplus T\hookrightarrow L$, a fitting extension of $f_T$ is an isometry $f_L\in O(L)$ that preserves $S$ and $T$ and restricts to $f_T$ on $T$. We call it stable if $f_L\in\widetilde O(L)$. For a stable fitting extension $f\in\widetilde O(\Lambda_0)$, write $f_S=f|_S$ and set
\[
A_S(f)=\langle\widetilde O(S),f_S\rangle<O(S).
\]

\begin{lem}\label{lem:group-from-output}
Assume that $\rank(S)\geq13$. Consider a symplectically saturated family of non-symplectic index $m$ constructed from a stable fitting extension $f\in\widetilde O(\Lambda_0)$. Restriction to $S$ identifies its acting group with
\[
A_S(f)=\langle\widetilde O(S),f_S\rangle.
\]
Moreover,
\[
\widetilde O(S)\triangleleft A_S(f),\qquad
A_S(f)/\widetilde O(S)\cong\mu_m,
\qquad
|A_S(f)|=m|\widetilde O(S)|.
\]
Lattice-theoretically equivalent outputs give isomorphic groups. If the family is saturated, then $A_S(f)$ is isomorphic to the automorphism group of a general member.
\end{lem}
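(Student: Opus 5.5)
The argument has two parts. First we identify the acting group of the constructed family with its image in $O(S)$. Then we read off the normality and index statements from the character $\chi$ on $H^{3,1}$.

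Let $X$ be a general member of the family constructed in Proposition~\ref{prop:construct-family} from $f$. Its acting group is $A=\langle f,\widetilde O(S)\rangle$, where both generators are realized as automorphisms of $X$ through strong global Torelli. Symplectic saturation gives $\Aut^s(X)=\widetilde O(S)$ (via Laza--Zheng, $G\cong\widetilde O(S)$), so the coinvariant lattice of $\Aut^s(X)$ is exactly $S$ and $\rank(S)\ge 13$. Lemma~\ref{lem:faithful-S-action} then says that $r_S:\Aut(X)\to O(S)$ is injective. Restricted to $A<\Aut(X)$, it maps $A$ isomorphically onto $\langle \widetilde O(S)|_S, f_S\rangle=A_S(f)$. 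This uses that elements of $\widetilde O(S)$, extended by the identity on $T$, restrict on $S$ to themselves.

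For the structure statement, consider the character $\chi:A\to\CC^\times$ given by the action on $H^{3,1}(X)$. Its kernel is $A\cap\Aut^s(X)$. Since $\widetilde O(S)=\Aut^s(X)\subset A$, this kernel is exactly $\widetilde O(S)$, which is therefore normal in $A$. The period of $X$ lies in the $\zeta_m$-eigenspace of $f_T$, and $f_T=f|_T$ has order $m$. Hence $\chi(f)=\zeta_m$ (after the normalization in the construction), so $\chi(A)=\mu_m$. Transporting along $r_S$ gives
\[
\widetilde O(S)\triangleleft A_S(f),\qquad A_S(f)/\widetilde O(S)\cong\mu_m,
\]
and so $|A_S(f)|=m|\widetilde O(S)|$.

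One point needs care: $\chi(f)$ must have order exactly $m$, not a proper divisor. The eigenvalue of $f$ on the period line is a primitive $m$-th root of unity, because the period lies in $\ker\Phi_m(f_T)\otimes\CC$ and not in any smaller cyclotomic piece. This holds for a general period, and that is where the genericity of $X$ enters.

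For lattice-theoretically equivalent outputs, suppose $h\in O(\Lambda_0)$ conjugates one action to the other, with $h$ adjusted into $\widetilde O(\Lambda_0)$ as in Proposition~\ref{prop:lattice-equivalence}. Then $h$ carries $S$ to $S'$ and conjugates $\widetilde O(S)$ and $f_S$ to the corresponding data. It therefore induces an isomorphism $A_S(f)\cong A_{S'}(f')$; alternatively one can cite Proposition~\ref{prop:lattice-equivalence} directly for isomorphism of the generic groups.

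Finally, if the family is saturated, then $A=\Aut(X)$ for a general member. The injectivity established in the first step then gives $\Aut(X)\cong A_S(f)$.

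The main obstacle is the identification of $A$ with the full preimage of $\mu_m$ and its kernel with $\widetilde O(S)$. This needs symplectic saturation to ensure that no extra symplectic automorphisms enlarge the kernel, and genericity to ensure that $\chi(f)$ is primitive of order $m$. The remaining steps are formal.
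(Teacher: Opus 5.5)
Your proposal is correct and follows the paper's proof. In both, global Torelli realizes the lattice action, Lemma~\ref{lem:faithful-S-action} gives injectivity of restriction to $S$ (applicable because symplectic saturation makes $S$ the coinvariant lattice of $\Aut^s(X)$), the faithful character on $H^{3,1}$ gives the index $m$, and lattice equivalence is handled by conjugation. One minor point: genericity is not needed for $\chi(f)$ to have order exactly $m$, because the period line lies in the $\zeta_m$-eigenspace of $f$ for every member of the family, not only for a general one.
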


\begin{proof}
Proposition~\ref{prop:construct-family} and strong global Torelli realize the lattice action by automorphisms. Lemma~\ref{lem:faithful-S-action} shows that restriction to $S$ is injective. Its image is generated by $\widetilde O(S)$ and $f_S$, and is therefore $A_S(f)$. The quotient acts faithfully on $H^{3,1}$, so it has order $m$.

A lattice equivalence conjugates the actions on $S$. The last assertion follows from the definition of saturation.
\end{proof}

\begin{rmk}\label{rmk:group-order-on-S}
The implementation computes $A_S(f)$ directly from the refined action on $S$. It also checks
\[
|A_S(f)|=m|\widetilde O(S)|.
\]
Thus the computed group is also verified by its order.
\end{rmk}

A fixed lattice action gives one equivalence class of connected families in the non-real-character case. In the real-character case, it gives at most two, one from each type IV component. Complex conjugation exchanges the two possible classes. They are lattice-theoretically equivalent and have the same numerical invariants. We denote representatives by $\calF_{(S,T,f)}^j$ and omit $j$ when there is only one class. The computation records their common lattice-theoretic class and does not decide whether the two geometric classes coincide.

\begin{prop}\label{prop:lattice_family_criteria}
\begin{enumerate}
\item The family $\calF_{(S,T,f)}^j$ is symplectically saturated if and only if
\[
|\widetilde O(K)|=|\widetilde O(S)|.
\]
\item Consider two symplectically saturated families
$\calF_{(S_1,T_1,f_1)}^{j_1}$ and
$\calF_{(S_2,T_2,f_2)}^{j_2}$.
They are lattice-theoretically equivalent if and only if there exists
$h\in O(\Lambda_0)$ such that
\[
h(P_1)=P_2,\qquad h f_{1,P}h^{-1}=f_{2,P}.
\]
\end{enumerate}
\end{prop}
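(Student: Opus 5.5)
For part (1), I will work with a general period $\omega$ of the family $\calF_{(S,T,f)}^j$ and compute its symplectic automorphism group through Theorem~\ref{theorem: global torelli}. The $\zeta_m$-eigenline of $f_T$ lies in $P_\CC$. For a very general point of the eigenspace domain, the transcendental lattice (the smallest primitive sublattice whose complexification contains $\omega$) is exactly $P$. This follows by a Galois-conjugation argument: any rational sublattice containing $\omega$ for all $\omega$ in an open set must contain the full $\zeta_m$-eigenspace, hence all its Galois conjugates, hence $P$. So the algebraic part of $\Lambda_0$ is $K=P^\perp_{\Lambda_0}$.

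A symplectic automorphism acts trivially on $H^{3,1}$. Since it has finite order and preserves the Hodge structure, it acts trivially on the transcendental lattice $P$; here I use that the orbit of $\omega$ under $\Phi_m$-Galois conjugates spans $P_\CC$, the standard irreducibility argument. Conversely, any isometry of $\Lambda_0$ that is trivial on $P$ and lies in $\widetilde O(\Lambda_0)$ preserves the Hodge structure and $h^2$, so it is an automorphism.

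Isometries of $\Lambda_0$ trivial on $P$ correspond to isometries $g_K\in O(K)$ whose image in $O(q_K)$ is compatible with the identity on $P$ through the gluing. That means $g_K$ acts trivially on the glue subgroup $H_K\subset A_K$. The extra condition of lying in $\widetilde O(\Lambda_0)$ forces trivial action on the remaining part of $A_K$, the part that maps to $A_{\Lambda_0}$. Since $\Lambda_0\supset K\oplus P$ with $A_{\Lambda_0}\cong (H_K\oplus H_P)^\perp/H$, an element of $O(K)$ extends by $\id_P$ to $\widetilde O(\Lambda_0)$ exactly when it lies in $\widetilde O(K)$. I would check this identification carefully. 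It gives
\[
\Aut^s(X)\cong \widetilde O(K).
\]
Since $S\subset K$ (because $P\subset T$) and $\widetilde O(S)$ extends trivially on $T\supset P$, we have $\widetilde O(S)\hookrightarrow\widetilde O(K)$. So symplectic saturation, meaning $\rho(A^s)=\Aut^s(X)$ with $A^s\cong\widetilde O(S)$, is equivalent to equality of orders. The main obstacle is exactly this discriminant bookkeeping showing that "trivial on $P$ and in $\widetilde O(\Lambda_0)$" is the same as "in $\widetilde O(K)$". Positive-definiteness of $K$ will be used to ensure finiteness.

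For part (2), the forward direction goes as follows. A lattice-theoretic equivalence is given by $h$ conjugating the acting groups and matching characters. So $h$ carries the $\chi$-eigenspaces of $f_1$ to those of $f_2$. Because $\chi_1=\chi_2\circ\theta$, $h$ conjugates $\theta$-matched generators $f_1$ to $f_2$ modulo the symplectic part, which acts trivially on $P_i\subset T_i$. Hence $h(P_1)=P_2$ and $hf_{1,P}h^{-1}=f_{2,P}$.

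For the converse, $h$ identifies the eigenperiod domains, and therefore the general periods. By the part (1) analysis and global Torelli, the full automorphism group at a general period is the Hodge stabilizer, which is determined by $(P,f_P)$ alone. Symplectic saturation identifies the symplectic parts with the acting groups. Then conjugation by $h$, adjusted by $-\id$ if necessary as in Proposition~\ref{prop:lattice-equivalence}, gives the required $\theta$ with matching characters.
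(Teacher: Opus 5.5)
Your part (1) and the forward half of (2) follow the paper's proof. Part (1) uses the very-general-period argument: $P_\QQ$ is the smallest rational subspace containing the period line, so $\ker(g-\id)\supseteq P$ for every symplectic Hodge isometry $g$. It then extends elements of $\widetilde O(K)$ through the glue by $\id_P$.

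The discriminant bookkeeping you flag does work, and the paper uses it tacitly. Suppose $g\in\widetilde O(\Lambda_0)$ fixes $P$ pointwise, and let $a\in A_K$. Nondegeneracy of $b_P$ gives some $b\in A_P$ with $(a,b)\in H^\perp$. Triviality on $H^\perp/H\cong A_{\Lambda_0}$ then gives $(\overline{g_K}a-a,0)\in H$. Since $H$ projects injectively to $A_P$, this forces $\overline{g_K}a=a$.

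In the forward direction of (2), you should use the $\chi$-eigenspace of the group $A_i$, which lies in $T_{i,\CC}$, rather than that of $f_i$ on all of $\Lambda_{0,\CC}$. The equality $h(T_1)=T_2$ holds because $T_i$ is the invariant lattice of $A_i^s$ and $\theta(A_1^s)=A_2^s$.

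The converse of (2) has a gap. You identify the full Hodge stabilizers at $h$-corresponding general periods. But lattice-theoretic equivalence is a statement about the acting groups $A_i=\langle\widetilde O(S_i),f_i\rangle$. The families are only assumed symplectically saturated, so $A_i$ can be a proper subgroup of the generic $\Aut(X_i)$; the order-$3$ family for $F_{21}$ in the paper is an example. Matching the full groups and their symplectic parts does not by itself give $hA_1h^{-1}=A_2$. The missing step is $hf_1h^{-1}\in A_2$.

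The paper obtains this directly. Set $u=h^{-1}f_2^{-1}hf_1$. It is the identity on $P_1$ and lies in $\widetilde O(\Lambda_0)$, which is normal in $O(\Lambda_0)$. By part (1), $u|_{K_1}\in\widetilde O(K_1)$, and symplectic saturation identifies this with the symplectic part of $A_1$. Moreover, $h(K_1)=K_2$ gives $h\widetilde O(K_1)h^{-1}=\widetilde O(K_2)$. Hence conjugation by $h$ carries $A_1$ onto $A_2$. The characters agree because $h$ maps the $\zeta_m$-eigenspace of $f_{1,P}$ onto that of $f_{2,P}$.

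Within your framework you could instead argue that $A_i\supseteq\Aut^s(X_i)$ forces $A_i=\chi_i^{-1}(\mu_m)$. Either way the step must be stated. Once it is, you no longer need global Torelli or the full stabilizer. The $-\id$ adjustment is also unnecessary, since the definition allows $h\in O(\Lambda_0)$ and $\pm h$ induce the same conjugation.
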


\begin{proof}
\begin{enumerate}
\item Since $S\subset K$, every element of $\widetilde O(S)$ extends by the identity on $T$ and defines an element of $\widetilde O(K)$. Conversely, take $g_K\in\widetilde O(K)$ and let it act as the identity on $P$. Since $g_K$ acts trivially on $A_K$, the resulting action preserves the gluing subgroup of $K\oplus P\subset\Lambda_0$ and extends to an element of $\widetilde O(\Lambda_0)$. This extension fixes the period line and is therefore symplectic.

For each proper rational subspace $W\subsetneq P_\QQ$, the periods contained in $W_\CC$ form a proper analytic subset of the period domain. Indeed, if $W_\CC$ contained the full $\zeta_m$-eigenspace, then its Galois conjugates would contain every primitive $m$-eigenspace, and hence $W_\CC=P_\CC$. There are only countably many rational subspaces. By the Baire category theorem, their union does not contain the period domain. Thus, for a very general period, the smallest rational subspace containing the period line is $P_\QQ$.

An integral symplectic Hodge isometry fixes the period line and all its Galois conjugates, and therefore fixes $P$ pointwise. Its restriction to $K$ lies in $\widetilde O(K)$. Thus the symplectic automorphism group at a very general point, and hence the generic symplectic automorphism group, is $\widetilde O(K)$.

\item The forward implication follows by applying the lattice equivalence to the primitive eigenspace and then to all Galois conjugates. Conversely, suppose that $h(P_1)=P_2$ and $h f_{1,P}h^{-1}=f_{2,P}$. Then $h(K_1)=K_2$. By (1), the generic symplectic group of the $i$-th family is $\widetilde O(K_i)$, and its coinvariant lattice in $K_i$ is
\[
S_i=(K_i^{\widetilde O(K_i)})^\perp_{K_i}.
\]
Thus $h(S_1)=S_2$ and $h(T_1)=T_2$. The ambient isometry
\[
u=h^{-1}f_2^{-1}hf_1
\]
fixes $P_1$ pointwise and belongs to $\widetilde O(\Lambda_0)$; hence $u|_{K_1}\in\widetilde O(K_1)$. It follows that conjugation by $h$ identifies $A_{S_1}(f_1)$ with $A_{S_2}(f_2)$ and respects the character.

\end{enumerate}
\end{proof}

\section{The Enumeration Algorithm}\label{subsec:enumeration}

\subsection{Equivariant Extensions}

Let $L_1$ be an even positive-definite lattice, let $L_2$ be an even lattice, and let $f_{L_2}\in O(L_2)$ have order $m$. For an embedding
\[
\iota:L_1\oplus L_2\hookrightarrow\Lambda_0
\]
with both factors primitive, put
\[
P=\ker\Phi_m(f_{L_2}),\qquad L_3=\iota(P)^\perp_{\Lambda_0}.
\]
We say that $(L_1,L_2,f_{L_2},\iota)$ satisfies property $\clubsuit$ if the following conditions hold:
\begin{enumerate}
\item $f_{L_2}$ extends through $\iota$ to an element of $\widetilde O(\Lambda_0)$;
\item $\ker(f_{L_2}+f_{L_2}^{-1}-(\zeta_m+\zeta_m^{-1})\id)\subset (L_2)_\RR$ has signature $(*,2)$;
\item $L_3$ contains neither short nor long roots;
\item $|\widetilde O(L_3)|=|\widetilde O(L_1)|$.
\end{enumerate}
Its dimension is
\[
d(L_2,f_{L_2})=
\begin{cases}
\dim\ker(f_{L_2}-\zeta_m\id)-2,&m=1,2,\\
\dim\ker(f_{L_2}-\zeta_m\id)-1,&m\geq3.
\end{cases}
\]

Let $\calL(L_1,L_2,f_{L_2})$ be the set of embeddings satisfying property $\clubsuit$, modulo the relation
\[
\begin{aligned}
\iota\sim\iota'
\quad\Longleftrightarrow\quad
&\text{there exists }h\in O(\Lambda_0)\text{ such that}\\
&h(\iota(L_1))=\iota'(L_1),\qquad
(\iota'^{-1}h\iota)|_{L_2}\in C_{O(L_2)}(f_{L_2}).
\end{aligned}
\]
For fixed $m$ and $d$, let
\[
\calL(L_1,L_2,m,d)=\bigsqcup_{[f_{L_2}]}\calL(L_1,L_2,f_{L_2}),
\]
where the union is over conjugacy-class representatives of order-$m$ isometries with dimension $d$.

The OSCAR routine \texttt{equivariant\_primitive\_extensions} first computes an auxiliary set $\calL_0(S,T,f_T)$ of primitive extensions and retains one fitting ambient isometry for each extension class. This suffices to test existence. In one gluing type, however, the first fitting isometry need not act trivially on $A_{\Lambda_0}$. The program therefore applies the following refinement after the extension has passed the root and symplectic-saturatedness tests.

\begin{lem}\label{lem:stable-fitting-refinement}
Let $(S,T,\iota)$ be an embedded lattice datum in the Laza--Zheng classification with $\rank(S)<20$, and assume that its connected symplectic family has generic index $2$. Write
\[
\iota:S\oplus T\hookrightarrow L\cong\Lambda_0
\]
for the primitive extension, and let $f_L\in O(L)$ be a fitting extension of $f_T$. Let $H_S\subset A_S$ and $H_T\subset A_T$ be the glue subgroups. Then
\[
H_T=A_T,\qquad A_S=H_S\perp R,
\qquad R:=H_S^\perp\cong A_L,
\qquad |R|=3.
\]
If $f_L\notin\widetilde O(L)$, the action of $f_S=f_L|_S$ on $R$ is $-\id_R$. Define
\[
\tau=\id_{H_S}\perp(-\id_R)\in O(q_S).
\]
The isometry $\tau$ lifts to an isometry $u_S\in O(S)$. The isometry $u_S\oplus\id_T$ extends to $u_L\in O(L)$, and
\[
f_L^{\mathrm{st}}=u_Lf_L
\]
lies in $\widetilde O(L)$. Its restriction to $T$ is still $f_T$.

The subgroup
\[
\langle\widetilde O(S),f_L^{\mathrm{st}}|_S\rangle<O(S)
\]
does not depend on the choice of the lift $u_S$.
\end{lem}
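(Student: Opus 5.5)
We treat the lemma in four steps: the gluing structure, the action on $R$, the construction of $u_L$, and the independence of the lift.

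\emph{Gluing structure.} Since $L\cong\Lambda_0$, we have $|A_L|=3$. By Theorem~\ref{thm:double_equivariant_discriminant}, $A_L\cong H^\perp/H$ with $H\subset A_S\oplus A_T$ the graph of an anti-isometry $H_S\to H_T$. We first show $H_T=A_T$. The assumptions ($\rank(S)<20$, generic index $2$) restrict us to a finite list of Laza--Zheng data. From the tables of \cite{laza2022automorphisms} I expect to read off $|A_T|\cdot|A_S|=3|H|^2$ and that $A_T$ is glued entirely; if a uniform argument is wanted, one shows $|A_T|\le|A_S|$ and that the $3$-part of $A_L$ comes from $A_S$. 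Granting $H_T=A_T$, the subgroup $H^\perp/H$ is identified with $H_S^\perp\subset A_S$ via projection to $A_S$, so $R\cong A_L$ and $|R|=3$. Because $q_L$ is nondegenerate on $A_L$, the restriction $q_S|_R$ is nondegenerate. Hence $A_S=H_S\perp R$, which also gives $H_S\cap R=0$ and $H_S$ nondegenerate.

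\emph{Action on $R$.} The isometry $f_S$ preserves both $H_S$ and $R=H_S^\perp$. Under $R\cong A_L$, its action on $R$ is the action of $f_L$ on $A_L$. Since $O(q_{A_L})=\{\pm\id\}$, the condition $f_L\notin\widetilde O(L)$ forces this action to be $-\id_R$. The element $\tau$ is an isometry of $q_S$ because the decomposition $A_S=H_S\perp R$ is orthogonal.

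\emph{Lifting and extension.} We need $\tau$ to lie in the image of $O(S)\to O(q_S)$. Since $S$ is definite and not unimodular, this image is not automatic, and this is the main obstacle. First try: the natural candidate is $-f_S^{k}$ or a related element. Concretely, $-\id_S$ acts as $-\id$ on $A_S$, and $f_S$ acts by some $\bar f$ on $H_S$. So $-\id_S$ composed with an element acting as $-\id$ on $H_S$ and trivially on $R$ would suffice. Failing a uniform trick, we verify case by case, since the data are finite: compute $G(S,\id)$ by Miranda--Morrison or directly (as $S$ is definite) and check $\tau\in G(S,\id)$. Given a lift $u_S$, the map $u_S\oplus\id_T$ acts as the identity on $H_S$ and on $A_T=H_T$. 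It therefore preserves the graph $H$ and extends to $u_L\in O(L)$. On $A_L\cong R$ it acts by $-\id$, so $u_Lf_L$ acts trivially on $A_L$, and its restriction to $T$ is $\id\circ f_T=f_T$.

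\emph{Independence.} Two lifts $u_S,u_S'$ of $\tau$ differ by an element of $\widetilde O(S)$. Thus $f^{\mathrm{st}}|_S$ changes by left multiplication by an element of $\widetilde O(S)$, and the subgroup $\langle\widetilde O(S),f_L^{\mathrm{st}}|_S\rangle$ is unchanged.
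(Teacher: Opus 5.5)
\textbf{There is a genuine gap.} Your steps on the action of $f_S$ on $R$, on extending $u_S\oplus\id_T$ to $u_L$, and on independence of the lift agree with the paper. However, the two substantive assertions are not proved. You defer $H_T=A_T$ to reading the tables, and you defer the liftability of $\tau$ to a case-by-case computation of the image of $O(S)\to O(q_S)$ that you do not carry out.

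The missing idea is to use the hypothesis that the generic index is $2$. You never invoke it, and it drives both steps. For generic-index-$1$ data the residual order-$3$ summand lies in $A_T$, so $H_T=A_T$ fails without the hypothesis. What the hypothesis gives is a non-symplectic involution $\sigma$ on every member of the family.
\begin{itemize}
\item $\sigma$ lies in $\widetilde O(L)$, since it fixes $h^2$.
\item The period line of a very general member lies in no proper rational subspace of $T_\CC$, and $\ker(\sigma+\id)$ is such a rational subspace containing it. Hence $\sigma|_T=-\id_T$, and $\sigma$ acts as $-\id$ on $A_T$.
\end{itemize}
The paper packages this through the criterion $q_S\cong 3^{-1}\oplus(-q_T)$ of \cite{FWZ26}.

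With $\sigma$ in hand, both gaps close uniformly.

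\emph{First gap ($H_T=A_T$).} For any primitive gluing, $|A_S|\,|A_T|=|A_L|\,|H|^2=3|H|^2$, and $|H|$ divides both $|A_S|$ and $|A_T|$. So exactly one of $H_S=A_S$ or $H_T=A_T$ holds. In the case $H_S=A_S$, the order-$3$ residual $H_T^\perp\subset A_T$ is identified with $A_L$ exactly as you identified $R$. Then $\sigma$ would act on $A_L$ by $-\id\neq\id$, a contradiction. Hence $H_T=A_T$, equivalently $|A_S|=3|A_T|$.

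\emph{Second gap (lifting $\tau$).} Put $v_S:=\sigma|_S$.
\begin{itemize}
\item $v_S$ acts on $H_S$ by $-\id$, obtained by transporting $-\id_{A_T}$ through the equivariant glue map $H_S\to H_T=A_T$.
\item $v_S$ acts trivially on $R\cong A_L$, because $\sigma\in\widetilde O(L)$.
\end{itemize}
So $v_S$ induces $-\id_{H_S}\perp\id_R$. This is precisely the element ``acting as $-\id$ on $H_S$ and trivially on $R$'' that you noted would suffice. Therefore $u_S=-v_S$ is a lift of $\tau$, and no computation of $G(S,\id)$ or Miranda--Morrison data is needed.
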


\begin{proof}
The generic-index-$2$ criterion in \cite[Proposition-Definitions~4.7, 4.8 and Corollary~4.9]{FWZ26} gives
\[
q_S\cong3^{-1}\oplus(-q_T).
\]
In particular, $|\disc(S)|=3|\disc(T)|$. The discriminant formula for the primitive extension gives $|H_T|=|A_T|$, and hence $H_T=A_T$. The nondegeneracy of the glue form then gives the orthogonal decomposition $A_S=H_S\perp R$, where $|R|=3$. The quotient description of $A_L$ identifies $R$ with $A_L$.

A fitting isometry already has the required action on the glue subgroup. If its action on $A_L$ is nontrivial, it acts as $-\id_R$. Under the decomposition $A_S=H_S\perp R$, the generic non-symplectic involution supplied by the same criterion is induced by an isometry $v_S\in O(S)$. Its action is $-\id_{H_S}\perp\id_R$. Consequently, $u_S=-v_S$ induces $\id_{H_S}\perp(-\id_R)=\tau$.

Since $\tau$ is the identity on $H_S$, the isometry $u_S\oplus\id_T$ preserves the glue and extends to $u_L\in O(L)$. Both $u_L$ and $f_L$ act as $-\id_R$ on the residual summand. Their product therefore acts trivially on $A_L$.

If $u_S'$ is another lift of $\tau$, then $u_S'u_S^{-1}\in\widetilde O(S)$. Therefore the two refined restrictions generate the same subgroup together with $\widetilde O(S)$.
\end{proof}

\begin{algorithm}[htbp]
\caption{Refining a surviving fitting isometry}
\label{alg:stable-fitting-refinement}
\begin{algorithmic}[1]
\Require A fitting extension $(L,S,T,f_L)$ for a Laza--Zheng embedded lattice datum of generic index $2$
\Ensure A stable fitting extension $f_L^{\mathrm{st}}\in\widetilde O(L)$
\If{$f_L\in\widetilde O(L)$}
    \State \Return $f_L$
\EndIf
\State Recover the glue subgroup $H_S\subset A_S$ and compute $R=H_S^\perp$
\State Compute $\tau=\id_{H_S}\perp(-\id_R)\in O(q_S)$
\State Choose $u_S\in O(S)$ inducing $\tau$
\State Extend $u_S\oplus\id_T$ to $u_L\in O(L)$
\State \Return $f_L^{\mathrm{st}}=u_Lf_L$
\end{algorithmic}
\end{algorithm}

\begin{algorithm}[htbp]
\caption{Computing $\calL(S,T,f_T)$}
\label{alg:lattice-enumeration1}
\begin{algorithmic}[1]
\Require Lattices $(S,T)$ and an isometry $f_T\in O(T)$ of order $m$ satisfying the signature condition
\Ensure Representatives of $\calL(S,T,f_T)$, each with a stable fitting isometry
\State $Result\gets\varnothing$
\State Compute $\calL_0(S,T,f_T)$, retaining one fitting isometry $f_{\Lambda_0}$ for each extension class
\ForAll{$(S,T,f_T,\iota,f_{\Lambda_0})\in\calL_0(S,T,f_T)$}
    \If{$f_{\Lambda_0}\notin\widetilde O(\Lambda_0)$ and $|\disc(S)|\neq3|\disc(T)|$}
        \State \textbf{continue}
    \EndIf
    \State $P\gets\ker\Phi_m(f_T)$ and $K\gets\iota(P)^\perp_{\Lambda_0}$
    \If{$K$ contains a short or long root, or $|\widetilde O(K)|\neq|\widetilde O(S)|$}
        \State \textbf{continue}
    \EndIf
    \If{$f_{\Lambda_0}\notin\widetilde O(\Lambda_0)$}
        \State $f_{\Lambda_0}\gets$ Algorithm~\ref{alg:stable-fitting-refinement}$(\Lambda_0,S,T,f_{\Lambda_0})$
    \EndIf
    \State Add $(S,T,f_T,\iota,f_{\Lambda_0})$ to $Result$
\EndFor
\State \Return $Result$
\end{algorithmic}
\end{algorithm}

\begin{lem}\label{lem:algorithm1}
Algorithm~\ref{alg:lattice-enumeration1} computes $\calL(S,T,f_T)$.
\end{lem}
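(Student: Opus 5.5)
We prove correctness in two directions: every tuple returned by Algorithm~\ref{alg:lattice-enumeration1} represents a class of $\calL(S,T,f_T)$ with a stable fitting isometry, and every class of $\calL(S,T,f_T)$ is represented exactly once. Throughout we take $L_1=S$, $L_2=T$, $f_{L_2}=f_T$ and $L_3=K$ in property $\clubsuit$. The underlying input is the correctness of the OSCAR routine computing $\calL_0(S,T,f_T)$. We use it in the form recalled in \S2: via Theorem~\ref{thm:double_equivariant_discriminant}, primitive extensions $S\oplus T\subset\Lambda_0$ to which $\id_S\oplus f_T$, or more generally some $f_S\oplus f_T$, extends correspond to equivariant anti-isometries modulo the double coset $G_T\backslash\mathcal G(S,T)/G_S$. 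Hence $\calL_0$ contains exactly one representative per class under the equivalence $\sim$, together with one fitting isometry.

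\emph{Soundness.} Take a tuple that survives the loop. Condition (2) of $\clubsuit$ holds by the input hypothesis on $f_T$. Conditions (3) and (4) are checked explicitly on $K$. For condition (1), there are two cases.
\begin{enumerate}
\item If $f_{\Lambda_0}\in\widetilde O(\Lambda_0)$, condition (1) holds directly.
\item Otherwise the tuple was not skipped, so $|\disc(S)|=3|\disc(T)|$. We then invoke Lemma~\ref{lem:stable-fitting-refinement}, which produces $f^{\mathrm{st}}=u_Lf_{\Lambda_0}\in\widetilde O(\Lambda_0)$ still restricting to $f_T$.
\end{enumerate}
In case (2), the tests on $K$ remain valid after refinement, because $K$ depends only on $\iota$ and $f_T$, not on the chosen extension. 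One point needs care: Lemma~\ref{lem:stable-fitting-refinement} is stated under the generic-index-$2$ hypothesis, whereas the algorithm only checks the discriminant equality. We will argue that its proof actually uses only $|\disc(S)|=3|\disc(T)|$, which gives $H_T=A_T$ and $A_S=H_S\perp R$ with $|R|=3$, together with the existence of some $u_S\in O(S)$ inducing $\tau$. For the Laza--Zheng data the latter is supplied by the criterion of \cite{FWZ26}, so we restrict to the embedded data in that classification.

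\emph{Completeness.} Take $\iota$ satisfying $\clubsuit$. Condition (1) gives an extension $f$ of $f_T$ in $\widetilde O(\Lambda_0)$ preserving $S$. Hence $\iota$ is $\sim$-equivalent to some representative in $\calL_0$; the element $h$ realizing this lies in $O(\Lambda_0)$ and centralizes $f_T$, so it transports conditions (2)--(4). The fitting isometry $f_{\Lambda_0}$ stored for that representative may differ from a stable one. If it is already stable, the tuple passes and is output. If it is not stable, the two fitting extensions $f$ and $f_{\Lambda_0}$ agree on $T$ but differ in their action on $A_{\Lambda_0}\cong\ZZ/3$, one acting trivially and the other by $-1$. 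We must then show $|\disc(S)|=3|\disc(T)|$, so that the tuple is not discarded at the first test.

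This is the main obstacle. To handle it, consider $g=f^{-1}f_{\Lambda_0}$. It is the identity on $T$, preserves $S$, and acts as $-1$ on $A_{\Lambda_0}$. Since $g$ fixes $H_T$ pointwise, it fixes $H_S$ pointwise via the glue map. We have $A_{\Lambda_0}=H^\perp/H$ inside $A_S\oplus A_T$, and $g$ acts as $-1$ on this quotient. Suppose $H_T\neq A_T$. We will try to show that then $A_{\Lambda_0}$ can be represented by classes in $H_T^\perp\oplus H_S^\perp$ whose $T$-component is nonzero modulo glue, and these are fixed by $g$, a contradiction. This forces $H_T=A_T$, i.e.\ $T^\vee$ is glued entirely. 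Comparing discriminants, $|\disc(S)|\,|\disc(T)|=|H|^2\cdot 3$ with $|H|=|A_T|$, which gives $|\disc(S)|=3|\disc(T)|$. After this, the tuple passes and is refined by Algorithm~\ref{alg:stable-fitting-refinement}.

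\emph{Uniqueness.} No two outputs are equivalent, because $\calL_0$ already lists one tuple per $\sim$-class and the filters only delete tuples. Hence the output is in bijection with $\calL(S,T,f_T)$.
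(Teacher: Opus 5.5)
Your proposal follows the paper's proof in structure, with one step left unexecuted.

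\textbf{Where it matches the paper.} Conditions (2)--(4) of $\clubsuit$ come from the input hypothesis and from the explicit tests on $K$, and $K$ does not depend on the fitting witness. A non-stable witness can be discarded exactly when the residual order-$3$ summand of the glue lies on the $T$-side. Otherwise Lemma~\ref{lem:stable-fitting-refinement}, with the lift of $\tau$ supplied by \cite{FWZ26}, produces a stable witness. The paper obtains the case split by quoting the two discriminant types $q_T\cong3^{-1}\oplus(-q_S)$ and $q_S\cong3^{-1}\oplus(-q_T)$ from \cite[Proposition~3.4]{FWZ26}. It then notes that in the first type $f_T$ alone determines the action on $A_{\Lambda_0}$. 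Your element $g=f^{-1}f_{\Lambda_0}$ is the contrapositive of that observation.

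\textbf{The gap.} The step you leave as ``we will try to show'' needs an input you have not supplied, and as phrased it is not quite right. You know $g$ is the identity on $A_T$, but you only know that $g_S$ fixes $H_S$ pointwise. Classes with a nonzero component in $H_S^\perp$ therefore need not be fixed by $g$. What closes the step is the count
\[
|A_S|\,|A_T|=|H|^2\,|A_{\Lambda_0}|,
\]
which by nondegeneracy of $b_S$ and $b_T$ reads $|H_S^\perp|\cdot|H_T^\perp|=3$. Hence $H_T\neq A_T$ forces $H_S^\perp=0$, that is, $H_S=A_S$.

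Then $0\oplus H_T^\perp\subset H^\perp$ maps isomorphically onto $A_{\Lambda_0}=H^\perp/H$. The map is injective because $H$, being a graph over $H_S$, meets $0\oplus A_T$ trivially, and it is surjective by comparing orders. Since $g$ acts trivially on $0\oplus H_T^\perp$, it acts trivially on $A_{\Lambda_0}$, contradicting $g|_{A_{\Lambda_0}}=-1$. This is the elementary content of the paper's first discriminant type. With it inserted, your conclusion $H_T=A_T$ and hence $|\disc(S)|=3|\disc(T)|$ goes through.

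\textbf{An omitted point.} The paper's proof also shows that the group $\langle\widetilde O(S),f_{\Lambda_0}|_S\rangle$ attached to an output does not depend on the chosen stable witness. Two stable witnesses for the same class differ on $S$ by an element of $\widetilde O(S)$. Your proposal does not address this, although the bare statement does not require it.
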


\begin{proof}
The signature condition is imposed on the input. The root test and the equality $|\widetilde O(K)|=|\widetilde O(S)|$ are conditions (3) and (4) of property $\clubsuit$.

For the Laza--Zheng data, a primitive extension has one of the two discriminant types
\[
q_T\cong3^{-1}\oplus(-q_S),\qquad
q_S\cong3^{-1}\oplus(-q_T);
\]
see \cite[Proposition~3.4]{FWZ26}. In the first type, the glue subgroup on the $S$-side is all of $A_S$, while the residual group of order $3$ lies on the $T$-side. The fixed isometry $f_T$ therefore determines the action on $A_{\Lambda_0}$, independently of the chosen fitting extension. Thus an extension class is retained precisely when this action is trivial.

In the second type, the embedded lattice datum has generic index $2$ by \cite[Proposition-Definitions~4.7, 4.8 and Corollary~4.9]{FWZ26}. Lemma~\ref{lem:stable-fitting-refinement} therefore constructs a stable fitting extension whenever the first witness is not stable. The refinement does not change the embedded lattices, $f_T$, $P$, or $K$.

Suppose that two stable fitting extensions represent the same extension class and restrict to the same $f_T$. Their quotient acts trivially on $T$ and on $A_{\Lambda_0}$, so its restriction to $S$ lies in $\widetilde O(S)$. Hence the subgroup generated by $\widetilde O(S)$ and the refined restriction is independent of the fitting witness. The root and group-order tests may therefore be performed before the refinement. It follows that the algorithm retains exactly the extensions satisfying property $\clubsuit$ and attaches a well-defined group to each output.
\end{proof}

\begin{rmk}
The discriminants in the preceding argument are determinants of Gram matrices. The implementation uses their absolute values because the sign displayed by OSCAR depends on the chosen lattice model.
\end{rmk}

\subsection{Identification of Lattice-Theoretic Classes}

For a symplectically saturated output, Proposition~\ref{prop:lattice_family_criteria} recovers $(S,T)$ from the embedded pair $(K,P)$. More precisely, $S$ is the coinvariant lattice of $\widetilde O(K)$ in $K$, and $T=S^\perp_{\Lambda_0}$. Lemma~\ref{lem:group-from-output} recovers the abstract acting group from the induced action on $S$.

Let $\calC(S,T,m,d)$ be the set of lattice-theoretic equivalence classes represented by the resulting tuples $(K,P,f_P,\iota)$.

There is a natural surjection
\[
\calL(S,T,m,d)\longrightarrow\calC(S,T,m,d),
\]
which sends $(S,T,f_T,\iota)$ to the induced $(K,P,f_P,\iota)$. It need not be injective.

\begin{lem}\label{lem:L-to-C}
If $|\calL(S,T,f_T)|\leq1$ for every conjugacy-class representative $f_T$ of order $m$ and dimension $d$, then
\[
\calL(S,T,m,d)\longrightarrow\calC(S,T,m,d)
\]
is bijective.
\end{lem}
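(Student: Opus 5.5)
Surjectivity is already built into the construction, so the whole task is injectivity. Suppose two elements of $\calL(S,T,m,d)$ have the same image in $\calC(S,T,m,d)$. They are represented by $(S,T,f_{1,T},\iota_1)$ and $(S,T,f_{2,T},\iota_2)$, where $f_{i,T}$ are conjugacy-class representatives of order $m$ and dimension $d$. We must show that the two tuples are equal in $\calL(S,T,m,d)$. That means two things: $f_{1,T}=f_{2,T}$, and $\iota_1\sim\iota_2$ in $\calL(S,T,f_{1,T})$. Once the first equality is known, the hypothesis $|\calL(S,T,f_{1,T})|\le 1$ gives the second one for free. So the real content is to show that equal lattice-theoretic classes force the same conjugacy class of $f_T$.

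\textbf{Step 1 (the comparison isometry).} By Proposition~\ref{prop:lattice_family_criteria}(2), equality of the classes gives $h\in O(\Lambda_0)$ with $h(P_1)=P_2$ and $hf_{1,P}h^{-1}=f_{2,P}$. Here $P_i=\iota_i(\ker\Phi_m(f_{i,T}))$. The proof of that proposition shows more: $h(K_1)=K_2$, $h(S_1)=S_2$ and $h(T_1)=T_2$, where $S_i=\iota_i(S)$ and $T_i=\iota_i(T)$. Moreover the ambient stable fitting isometries satisfy
\[
u:=h^{-1}f_2^{-1}hf_1,\qquad u|_{P_1}=\id,\qquad u\in\widetilde O(\Lambda_0),
\]
so that $u|_{K_1}\in\widetilde O(K_1)$. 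Since $K_1$ is symplectically saturated, $\widetilde O(K_1)$ is exactly the extension of $\widetilde O(S_1)$ by the identity on $T_1$. In particular $u$ acts trivially on $T_1$.

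\textbf{Step 2 (same conjugacy class).} On $T_1$ the previous step gives $f_{1}|_{T_1}=h^{-1}f_{2}h|_{T_1}$. Transport everything back to $T$ via $\iota_1$ and $\iota_2$, and put $g=\iota_2^{-1}h\iota_1|_T\in O(T)$. Then $f_{1,T}=g^{-1}f_{2,T}g$, so $f_{1,T}$ and $f_{2,T}$ are conjugate in $O(T)$. Because both were chosen as conjugacy-class representatives, they are equal. Now $g$ commutes with $f_{1,T}$, i.e.\ $g\in C_{O(T)}(f_{1,T})$. Together with $h(\iota_1(S))=\iota_2(S)$, this is precisely the relation $\iota_1\sim\iota_2$ defining $\calL(S,T,f_{1,T})$. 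That already proves injectivity, even without invoking the cardinality hypothesis.

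\textbf{Where the difficulty lies.} The delicate point is Step 1: we need $u$ to be trivial on all of $T_1$, not merely on $P_1$. This requires $\widetilde O(K_1)$ to fix $T_1\cap K_1$. That holds because, by Proposition~\ref{prop:lattice_family_criteria}(1) and the saturation condition (4) of $\clubsuit$, $|\widetilde O(K_1)|=|\widetilde O(S)|$. So $\widetilde O(K_1)$ is exactly the image of $\widetilde O(S)$ extended trivially, and $T_1=K_1^{\widetilde O(K_1)}\oplus$-glue$\,P_1$ is fixed.

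If the conventions for conjugacy classes are taken in $O(T)$ rather than up to the full centralizer, some bookkeeping may remain. In that case I would fall back on the hypothesis $|\calL(S,T,f_T)|\le1$: it guarantees that each fibre over a fixed $f_T$ is a single point, so any residual ambiguity is confined to the choice of representative $f_T$, and Step 2 removes that ambiguity.
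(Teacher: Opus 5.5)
Your argument is correct, and up to the last step it is the paper's proof. Proposition~\ref{prop:lattice_family_criteria}(2) supplies $h$. The discrepancy $u=h^{-1}f_2^{-1}hf_1$ lies in $\widetilde O(K_1)$, which by condition (4) of $\clubsuit$ is just $\widetilde O(S_1)$ extended trivially. Hence $h$ conjugates $f_{1,T}$ to $f_{2,T}$, and the two representatives coincide. Two small fixes are needed. First, the extension is by the identity on $T_1\cap K_1$, not on $T_1$, since $T_1$ is not contained in $K_1$. Second, $u$ is then trivial on $T_1$ because it is trivial on the finite-index sublattice $P_1\oplus(T_1\cap K_1)$ of the torsion-free lattice $T_1$.

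The routes diverge at the end. The paper concludes from the hypothesis: both outputs lie in the same set $\calL(S,T,f_T)$, which has at most one element. You instead observe that $g=\iota_2^{-1}h\iota_1|_T$ centralizes $f_T$ and that $h(\iota_1(S))=\iota_2(S)$. That is exactly the relation $\iota_1\sim\iota_2$, so you obtain injectivity with no cardinality assumption.

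This is valid for $\sim$ as defined, but it clashes with two things in the paper. One is the remark that the map ``need not be injective''. The other is the $M_9$ computation, which reports $|\calL(S,T,f_T)|=2$ yet a single lattice-theoretic class. Those counts therefore cannot be counts of $\sim$-classes as literally defined. The paper's route is insensitive to this: it only needs the enumerated set for $f_T$ to be a singleton, whatever relation the implementation used to reduce it. Your route shows that, for the relation as stated, the hypothesis is superfluous.

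Your closing fallback paragraph is exactly the paper's argument and can be dropped, since conjugacy classes are taken in $O(T)$ by definition.
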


\begin{proof}
Suppose two outputs determine the same element of $\calC(S,T,m,d)$. By Proposition~\ref{prop:lattice_family_criteria}(2), an ambient isometry identifies their lattice actions. Since the symplectic group acts trivially on $T$, the restrictions $f_T$ and $f_T'$ are conjugate in $O(T)$.

After replacing both restrictions by the chosen conjugacy-class representative, the two outputs belong to the same set $\calL(S,T,f_T)$. This set has at most one element, so the outputs coincide.
\end{proof}

\begin{lem}\label{lem:auxiliary-KP-test}
Suppose several outputs have isometric lattices $K$ and the same isometry class of $(P,f_P)$. Their lattice-theoretic equivalence classes are in bijection with the root-free primitive extensions of $(K,P,f_P)$ to $\Lambda_0$ that admit a stable fitting isometry and recover the original embedded lattice datum $(S,T,\iota)$.
\end{lem}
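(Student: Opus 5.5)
We will build the bijection directly from Proposition~\ref{prop:lattice_family_criteria}(2). Fix one model $K_0$ of the common isometry class of $K$, and one model $(P_0,f_0)$ of the common isometry class of $(P,f_P)$. Each output $(K,P,f_P,\iota)$ gives a primitive extension $K_0\oplus P_0\hookrightarrow\Lambda_0$, after choosing isometries $K\cong K_0$ and $(P,f_P)\cong(P_0,f_0)$.

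\emph{Properties of this extension.} It has the following properties:
\begin{itemize}
\item It is root-free on the $K$-side, by condition (3) of $\clubsuit$.
\item It admits a stable fitting isometry, obtained by transporting the refined $f_{\Lambda_0}$ from Algorithm~\ref{alg:lattice-enumeration1}.
\item It recovers the original $(S,T,\iota)$. Indeed, $S$ is the coinvariant lattice of $\widetilde O(K)$ in $K$ and $T=S^\perp$, as in the paragraph preceding Lemma~\ref{lem:L-to-C}.
\end{itemize}
Extensions are taken up to isometries of $\Lambda_0$ that preserve $K_0$ and $P_0$ and commute with $f_0$ on $P_0$. In the language of Theorem~\ref{thm:double_equivariant_discriminant}, these are double cosets of the form $G_{P_0}\backslash\mathcal G(K_0,P_0)/O(q_{K_0})$-image, with the centralizer of $f_0$ acting on the $P$-side.

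\emph{Well-definedness and injectivity.} We check that the class of the extension does not depend on the chosen identifications, and that two outputs are lattice-theoretically equivalent exactly when their extensions are isomorphic in this sense. By Proposition~\ref{prop:lattice_family_criteria}(2), equivalence means there is $h\in O(\Lambda_0)$ with $h(P_1)=P_2$ and $hf_{1,P}h^{-1}=f_{2,P}$. Such an $h$ automatically maps $K_1=P_1^\perp$ onto $K_2$. Transported to the models, $h$ becomes an isometry of extensions that preserves both factors and intertwines $f_0$ on $P_0$. This is precisely the equivalence of equivariant primitive extensions, in the sense of the double quotient in the subsection on equivariant extensions.

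The converse direction reverses the same construction. Changing the identifications only alters the extension by elements of $O(K_0)$ and $C_{O(P_0)}(f_0)$, so the class is unchanged. Note that $h$ may lie in $O(\Lambda_0)$ rather than $\widetilde O(\Lambda_0)$. This is harmless, because the equivalence relation in Definition~\ref{def:sat_id}(3) allows any $h\in O(\Lambda_0)$.

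\emph{Surjectivity.} This is the step we expect to be the main obstacle. Start from a root-free primitive extension of $(K_0,P_0,f_0)$ with a stable fitting isometry $f$ that recovers $(S,T,\iota)$. We must show that it comes from an output. The plan is as follows.
\begin{enumerate}
\item Since $f\in\widetilde O(\Lambda_0)$ and the extension is root-free, Proposition~\ref{prop:construct-family} produces a family.
\item The signature condition holds because it depends only on $(P_0,f_0)$, which is the same as for the given outputs.
\item Symplectic saturatedness $|\widetilde O(K)|=|\widetilde O(S)|$ is a condition on $K$ and on the recovered $S$, so it is inherited.
\item Restricting $f$ to $T$ gives an $f_T$ of order $m$, so the datum lies in $\calL(S,T,f_T)$ after conjugating $f_T$ to its chosen representative.
\end{enumerate}
The delicate points are twofold. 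First, $f|_T$ must have order exactly $m$ and $\ker\Phi_m(f_T)$ must equal the embedded $P_0$. We will argue this from $P_0\subset T$, which holds because $S\subset K_0$, together with $f$ having minimal polynomial $\Phi_m$ on $P_0$. Second, the stable fitting isometry must be chosen compatibly. By the argument at the end of Lemma~\ref{lem:algorithm1}, different stable choices differ by $\widetilde O(S)$, so the attached class is independent of that choice.
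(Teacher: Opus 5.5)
Your route is the paper's route. The paper proves the lemma in two lines: it applies Proposition~\ref{prop:lattice_family_criteria}(2) to $(K,P,f_P)$, cites the extension criterion of Lemma~\ref{lem:algorithm1}, and notes that Algorithm~\ref{alg:stable-fitting-refinement} changes neither the lattices nor the class. Your well-definedness and injectivity paragraphs correctly expand the first half of this, and that direction is all the paper actually uses, for $M_9$ and $(A_{3,3},2)$.

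The gap is in your surjectivity step, exactly at the point you flag. Put $\phi=f|_{T\cap K_0}$, where $T\cap K_0=K_0^{\widetilde O(K_0)}$. Then $T_\QQ=P_{0,\QQ}\oplus(T\cap K_0)_\QQ$, and rationally $f|_T=f_0\oplus\phi$. The facts you cite, $P_0\subset T$ and minimal polynomial $\Phi_m$ on $P_0$, only give $m\mid\ord(f|_T)$ and $P_0\subseteq\ker\Phi_m(f|_T)$. They say nothing about $\phi$.

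The order claim can be repaired by a different argument. Let $H\subset A_{K_0}\oplus A_{P_0}$ be the glue graph, and let $g\in\widetilde O(\Lambda_0)$ be trivial on $P_0$. For every $x\in A_{K_0}$ there is $y\in A_{P_0}$ with $(x,y)\in H^\perp$. Triviality of $g$ on $H^\perp/H=A_{\Lambda_0}$ forces $(\bar g x-x,0)\in H$, hence $\bar gx=x$, so $g|_{K_0}\in\widetilde O(K_0)$. Applied to $g=f^m$, this gives $\phi^m=\id$ and hence $\ord(f|_T)=m$.

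However, $\ker\Phi_m(f|_T)=P_0$ is equivalent to $\ker\Phi_m(\phi)=0$. Nothing in your hypotheses forces this: root-freeness, stability, $|\widetilde O(K)|=|\widetilde O(S)|$ and recovery of $(S,T,\iota)$ are all silent on $\phi$. You also cannot arrange it by choosing $f$ differently. By the same computation, two stable fitting isometries differ by an element of $\widetilde O(K_0)\oplus\id_{P_0}$, which fixes $T\cap K_0$ pointwise. So $\phi$ is an invariant of the extension class.

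Suppose $\phi$ has a primitive $m$-th root of unity as an eigenvalue. Then every candidate $f_T$ has $\ker\Phi_m(f_T)\supsetneq P_0$. Your step (1) invokes Proposition~\ref{prop:construct-family}, whose $K$ is $(\ker\Phi_m(f_T))^\perp$, so it produces a different datum. Transporting an output's stable isometry along an isomorphism of extensions would contradict the invariance of $\phi$. Hence the extension is attached to no output with this $(K,P)$.

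Surjectivity therefore needs the extra condition $\ker\Phi_m(f|_T)=P$. You should either read it into ``recovers the original embedded lattice datum'', meaning recovering $f_T$ with $P$ as its $\Phi_m$-kernel, or verify it separately. With it, your steps (1)--(4) go through.

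The paper's proof passes over this point as well. It is harmless in the paper's two applications: there $T\cap K$ has rank $1$ (for $M_9$) or rank $0$ (for $(A_{3,3},2)$, where $P=T$), while a $\Phi_m$-part with $m\geq3$ needs rank at least $\varphi(m)\geq2$.
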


\begin{proof}
This is Proposition~\ref{prop:lattice_family_criteria}(2) applied to $(K,P,f_P)$, together with the extension criterion in Lemma~\ref{lem:algorithm1}. In the generic-index-$2$ cases, Algorithm~\ref{alg:stable-fitting-refinement} adjusts the action on the residual discriminant summand. This does not change the embedded lattices or the lattice-theoretic class.
\end{proof}

\subsection{Saturation}

For the computations in this paper, the following consequence of saturation is sufficient. It records the dimension and divisibility restrictions imposed by a proper saturation, together with the inclusion between the acting groups.

\begin{prop}\label{prop:saturation-dimension}
Fix a connected symplectic family. Let $\calF_A$ be a symplectically saturated connected family with acting group $A$ and non-symplectic quotient of order $m$. Put
\[
d=\dim\calF_A.
\]
If $\calF_A$ is not saturated, then its saturation $\calF_{A'}$ satisfies
\[
A\subsetneq A',\qquad m\mid m',\qquad m'>m,
\qquad \dim\calF_{A'}=d,
\]
where $m'=[A':(A')^s]$. Consequently, if the exhaustive output for the same connected symplectic family contains no output of dimension $d$ and order $m'>m$ divisible by $m$, then $\calF_A$ is saturated.
\end{prop}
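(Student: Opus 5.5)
The plan is to apply Proposition-Definition~\ref{prop-def:saturation} to $\calF_A$ and then compare the two actions inside $\Aut(X)$ for a general member $X$. Let $(A',\rho',\xi')$ be the saturation. By Lemma~\ref{lem:general-saturation}, after replacing it by an equivalent action we may assume $\rho(A)\subseteq\rho'(A')$ and $\xi'|_{\widehat{\rho(A)}}=\xi$. Moreover, the two families have the same image in $\calM$, and $\rho'(A')=\Aut(X)$ for a general member $X$ of that image. Since $\calF_A$ is not saturated, $\rho(A)\neq\Aut(X)$ at a general point. Hence the inclusion is strict, and $A\subsetneq A'$. The equality $\dim\calF_{A'}=d$ holds because both families are the same connected subvariety of $\calM$. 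Here we use Lemma~\ref{lem:general-saturation}(2), which gives equal images, and the finiteness of the forgetful maps.

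Next I would control the symplectic parts and the indices. Because $\calF_A$ is symplectically saturated, a general member satisfies $\rho(A^s)=\Aut^s(X)=\rho'((A')^s)$. The characters $\chi$ and $\chi'$ both come from the action on the same line $H^{3,1}(X)$, so $\chi'|_A=\chi$. Therefore $A^s=A\cap (A')^s$, and the natural map
\[
A/A^s\longrightarrow A'/(A')^s
\]
is injective. Since $A^s=(A')^s$, it is in fact the inclusion $\mu_m\hookrightarrow\mu_{m'}$, which gives $m\mid m'$. The key point is that $A$ and $A'$ have the same symplectic subgroup. So if $m'=m$ we would get $|A'|=m|A^s|=|A|$, hence $A=A'$, which contradicts strictness. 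Thus $m'>m$.

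For the final consequence, note that the saturation $\calF_{A'}$ is itself a connected family in the same connected symplectic family. It is symplectically saturated, because its generic symplectic group is $\Aut^s(X)$. Its generic automorphism group contains $\langle f',\widetilde O(S)\rangle$ for a generator $f'$ of $A'/(A')^s$. By Lemma~\ref{lem:faithful-S-action} (when $\rank S\ge 13$) and Proposition~\ref{prop:construct-family}, it is then realized by a stable fitting extension satisfying property $\clubsuit$ with dimension $d$ and order $m'$. Exhaustiveness of the enumeration (Lemma~\ref{lem:algorithm1}) would then list it.

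The main obstacle is this last step, namely checking that the saturated family genuinely appears in the exhaustive output with the same dimension $d$. Two points need care. First, its eigenspace computation must give $d(T,f'_T)=d$. Second, conditions (3) and (4) of $\clubsuit$ hold automatically. For (3), the generic periods of $\calF_{A'}$ are periods of smooth cubics, so the orthogonal complement $K'$ of the primitive eigenspace is root-free by Theorem~\ref{thm:global_Torelli_image}. Condition (4) holds by Proposition~\ref{prop:lattice_family_criteria}(1). I expect the dimension match to follow simply from Proposition~\ref{prop:moduli_dim} applied to the common image.
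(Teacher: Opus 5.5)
Your proposal is correct and follows essentially the same route as the paper. Uniqueness of the saturation with the same image in $\calM$ gives $\dim\calF_{A'}=d$ and $A\subsetneq A'$; equal symplectic kernels, together with the faithful cyclic action on $H^{3,1}$, give $m\mid m'$ and $m'>m$; and exhaustiveness of the enumeration over symplectically saturated actions places $\calF_{A'}$ among the outputs, so the last claim follows by contraposition. Your extra checks in the final step (root-freeness of $K'$ via Theorem~\ref{thm:global_Torelli_image}, condition (4) via Proposition~\ref{prop:lattice_family_criteria}(1), and the dimension via Proposition~\ref{prop:moduli_dim}) only spell out what the paper asserts in one line, except that the direction you need is that the geometric generator $f'$ already lies in $\widetilde O(\Lambda_0)$ because it fixes $h^2$, not Proposition~\ref{prop:construct-family}, which goes from lattice data to geometry.
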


\begin{proof}
Proposition-Definition~\ref{prop-def:saturation} gives a unique saturation with the same image in $\calM$. Hence the two families have the same dimension, and $A$ is contained in $A'$. Since both families are symplectically saturated inside the same connected symplectic family, their symplectic kernels agree. Their quotients are cyclic and act faithfully on $H^{3,1}$. Thus the quotient of $A$ is a subgroup of the quotient of $A'$. If the saturation is proper, then $m\mid m'$ and $m'>m$.

The candidate list is exhaustive, and Algorithm~\ref{alg:lattice-enumeration2} enumerates all lattice data arising from symplectically saturated actions in the fixed connected symplectic family. Therefore the saturation is represented among the outputs. The last assertion follows by contraposition.
\end{proof}

The converse need not hold: a family with a larger divisible index and the same dimension is not automatically the saturation of a given family. The following restriction argument resolves the only case in which this issue arises.

\begin{lem}\label{lem:equal-dimensional-restriction}
Let $\calF_{A'}$ be a saturated connected family, and let
\[
1\longrightarrow G\longrightarrow A'\longrightarrow\mu_{m'}\longrightarrow1
\]
be its symplectic exact sequence. Let $m$ be a proper divisor of $m'$, and let $A<A'$ be the inverse image of the unique subgroup $\mu_m<\mu_{m'}$. Let $\calF_A$ be the connected family associated with the restricted $A$-action that contains $\calF_{A'}$. Then
\[
\calF_{A'}\subseteq\calF_A,
\qquad
\dim\calF_A\geq\dim\calF_{A'}.
\]

Suppose in addition that the order-$m$ output for the fixed connected symplectic family is unique. If its dimension equals $\dim\calF_{A'}$, then
\[
\calF_A=\calF_{A'},
\]
and $\calF_{A'}$ is the saturation of $\calF_A$.
\end{lem}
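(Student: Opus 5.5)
The plan has two parts: first the inclusion and the dimension bound, then the equality of families.

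For the inclusion, take a general member $X$ of $\calF_{A'}$. Since $\calF_{A'}$ is saturated, $\Aut(X)\cong A'$, realized by $\rho'(A')$ with character $\chi'$ onto $\mu_{m'}$. Restricting $\rho'$ to $A$ gives a projective action $\rho=\rho'|_A$ with character $\xi=\xi'|_{\widehat{\rho(A)}}$, and $X$ is invariant under it. Hence the semi-invariant space satisfies $\mathcal V_{A',\rho',\xi'}\subseteq\mathcal V_{A,\rho,\xi}$, and both projectivized smooth loci contain $X$. The GIT image of $\PP\mathcal V^{\mathrm{sm}}_{A',\rho',\xi'}$ in $\calM$ is irreducible and lies in the image of $\PP\mathcal V^{\mathrm{sm}}_{A,\rho,\xi}$. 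It therefore lies in a single connected component, namely the one containing $X$, which is $\calF_A$ by definition. This gives $\calF_{A'}\subseteq\calF_A$, and the dimension inequality follows. As an alternative, one can argue on the period side: the $\zeta_{m'}$-eigenspace of a generator $f'$ is contained in the $\zeta_m$-eigenspace of $f'^{m'/m}$, and the inequality then follows from Proposition~\ref{prop:moduli_dim}.

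For the second part, I need to identify $\calF_A$ with the unique order-$m$ output. Restricting the $A$-action, the symplectic subgroup is still $G=\Aut^s$ of the general member of $\calF_{A'}$. Since $\calF_{A'}\subseteq\calF_A$, a general member $Y$ of $\calF_A$ has $\Aut^s(Y)\supseteq G$, and symplectic automorphism groups can only jump up on closed subsets. The key point is that a general member has $\Aut^s(Y)=G$, i.e.\ $\calF_A$ is symplectically saturated inside the fixed connected symplectic family. I expect this to be the main obstacle. I would argue it via Proposition~\ref{prop:lattice_family_criteria}(1): on the lattice side, $\calF_A$ is the family attached to $f=f'^{m'/m}$ (a stable fitting isometry, since $f'\in\widetilde O(\Lambda_0)$), and the very general period of $\calF_A$ determines $K$ with symplectic group $\widetilde O(K)$. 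This gives an order-$m$ lattice datum $(K,P,f_P)$ in the exhaustive output. If this datum were not symplectically saturated with coinvariant lattice $S$, I would instead use the inclusion $K\supseteq S$ together with $\widetilde O(K)\supseteq G$ acting trivially on $P\supseteq T$-eigenpart. From this, the coinvariant lattice of $\widetilde O(K)$ contains $S$ and $\widetilde O(K)$ is a symplectic group of a family containing the symplectic family; both are equal because the symplectic family is fixed. In any case $\calF_A$, or its symplectic saturation, is represented among the order-$m$ outputs for the fixed symplectic family. By uniqueness, it is the given output, whose dimension equals $\dim\calF_{A'}$.

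Finally, $\calF_{A'}\subseteq\calF_A$ are irreducible closed families (each connected component is irreducible, \S\ref{subset:GIT}) of the same dimension. Hence $\calF_A=\calF_{A'}$. A general member of $\calF_A$ is then a general member of $\calF_{A'}$, so its full automorphism action is $(A',\rho',\xi')$. By Proposition-Definition~\ref{prop-def:saturation} and the uniqueness in Lemma~\ref{lem:general-saturation}, $\calF_{A'}$ is the saturation of $\calF_A$.
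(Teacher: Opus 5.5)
Your first part (the inclusion via $\mathcal V_{A',\rho',\xi'}\subseteq\mathcal V_{A,\rho,\xi}$ and irreducibility of the image) and your last step (two irreducible closed images of equal dimension coincide, then uniqueness of saturation) agree with the paper. The gap is the middle step, which you correctly identify as the crux. You must show that a general member $Y$ of $\calF_A$ has $\Aut^s(Y)=G$. Only then is $\calF_A$ represented by an order-$m$ output of the \emph{fixed} symplectic family, so that the uniqueness hypothesis gives $\dim\calF_A=\dim\calF_{A'}$.

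Your lattice argument does not prove this. Saying that the very general period of $\calF_A$ gives a datum $(K,P,f_P)$ ``in the exhaustive output'' presupposes condition (4) of property $\clubsuit$, namely $|\widetilde O(K)|=|\widetilde O(S)|$, which is what has to be proved. The fallback does not close the gap. The facts $S\subseteq K$ and $G\subseteq\widetilde O(K)$ give only one inclusion. A family with a larger symplectic group would be contained in the $G$-family rather than contain it. And ``both are equal because the symplectic family is fixed'' is not an argument. Moreover, if $\widetilde O(K)\supsetneq G$, then neither $\calF_A$ nor any ``symplectic saturation'' of it appears among the order-$m$ outputs for $(S,T)$, because condition (4) discards exactly such data. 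So ``in any case'' fails in the very case you are hedging against, and the uniqueness hypothesis cannot be used. (Also, $G\subseteq\Aut^s(Y)$ holds because $Y$ carries the $A$-action, not because $\calF_{A'}\subseteq\calF_A$.)

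The missing ingredient is the saturation of $\calF_{A'}$, which your middle step never uses. The semicontinuity you mention in passing is the right tool, applied at the right point. Take $X$ general in $\calF_{A'}$, so $\Aut^s(X)=G$. By the local containment in the proof of \cite[Proposition~2.5]{yu2020moduli}, the automorphism group of a general nearby member $Y$ of $\calF_A$ is conjugate into $\Aut(X)$ compatibly with the character on $H^{3,1}$. Hence $G\subseteq\Aut^s(Y)\hookrightarrow\Aut^s(X)=G$. This is the paper's argument.

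If you prefer to stay on the lattice side, put $f=f'^{m'/m}$. Then $f$ acts on $H^{3,1}(X)\subseteq T_\CC$ by $\zeta_m$, so the period of $X$ lies in $P_\CC$. Every element of $\widetilde O(K)$, extended by the identity on $P$, therefore lies in $\widetilde O(\Lambda_0)$ and is a symplectic Hodge isometry of $X$. This gives $|\widetilde O(K)|\le|\Aut^s(X)|=|\widetilde O(S)|$, which is condition (4). With this step in place, the rest of your proof goes through.
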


\begin{proof}
Every cubic fourfold with an $A'$-action also carries the restricted $A$-action. This gives the inclusion and the dimension inequality.

Let $X$ be a general member of $\calF_{A'}$. Since $\calF_{A'}$ is saturated, $\Aut^s(X)=G$. The local containment result used in the proof of \cite[Proposition~2.5]{yu2020moduli} shows that the automorphism group of a sufficiently general nearby member of $\calF_A$ is conjugate to a subgroup of $\Aut(X)$. Every member of $\calF_A$, however, carries the action of $G$. Hence the generic symplectic automorphism group of $\calF_A$ is $G$. The family $\calF_A$ is therefore symplectically saturated and is represented by an order-$m$ output.

By the observation following the definition of a family, both images are irreducible. They are closed because the forgetful morphisms are finite \cite[Proposition~2.7]{yu2020moduli}. Therefore an inclusion between two such families of the same dimension must be an equality.

If the order-$m$ output is unique and has the same dimension as $\calF_{A'}$, the two images are equal. Since $A<A'$ and $\calF_{A'}$ is saturated, uniqueness of saturation gives the last assertion.
\end{proof}

\subsection{Main Procedure}

Let $m_{\mathrm{generic}}$ denote the generic non-symplectic index of the connected symplectic family. Let $I(S,T)$ be the set of positive integers $m$ such that $m>m_{\mathrm{generic}}$, the generic index divides $m$, and $m$ divides one of the bounds in \cite[Propositions~4.1 and 5.1]{FWZ26}. This finite set contains every possible index larger than the generic index.

\begin{algorithm}[htbp]
\caption{Computing $\bigsqcup_{m\in I(S,T),d}\calL(S,T,m,d)$ and the associated groups}
\label{alg:lattice-enumeration2}
\begin{algorithmic}[1]
\Require Lattices $(S,T)$ and the finite candidate list $I(S,T)$
\Ensure The retained embedded lattice data and the groups $A=\langle\widetilde O(S),f_S\rangle$
\State $Result\gets\varnothing$
\ForAll{$m\in I(S,T)$ in increasing order}
    \If{$\varphi(m)>\rank(T)$}
        \State \textbf{continue}
    \EndIf
    \If{some proper divisor $e\mid m$ belongs to $I(S,T)$ and has empty output}
        \State \textbf{continue}
    \EndIf
    \State Enumerate conjugacy-class representatives $f_T\in O(T)$ of order $m$ satisfying the signature condition
    \ForAll{such $f_T$}
        \State Compute $\calL(S,T,f_T)$ by Algorithm~\ref{alg:lattice-enumeration1}
        \ForAll{$(S,T,f_T,\iota,f_{\Lambda_0})\in\calL(S,T,f_T)$}
            \State $P\gets\ker\Phi_m(f_T)$, $f_P\gets f_T|_P$, and $K\gets\iota(P)^\perp_{\Lambda_0}$
            \State $d\gets\begin{cases}
                \rank(P)/\varphi(m)-2,&m=1,2,\\
                \rank(P)/\varphi(m)-1,&m\geq3
            \end{cases}$
            \State $f_S\gets f_{\Lambda_0}|_S$
            \State Compute $A\gets\langle\widetilde O(S),f_S\rangle<O(S)$
            \State Identify $A$ in GAP and verify $|A|=m|\widetilde O(S)|$
            \State Add $(m,d,|\calL(S,T,f_T)|,A,S,T,f_S,f_T,K,P,f_P,f_{\Lambda_0},\Lambda_0)$ to $Result$
        \EndFor
    \EndFor
\EndFor
\State \Return $Result$
\end{algorithmic}
\end{algorithm}

\begin{prop}\label{prop:main-procedure}
Assume that $(S,T)$ is a Laza--Zheng lattice pair.
\begin{enumerate}
\item Algorithm~\ref{alg:lattice-enumeration2} enumerates every lattice-theoretic datum arising from a symplectically saturated action on a connected component whose non-symplectic index belongs to $I(S,T)$, up to the possible repetitions measured by
\[
\calL(S,T,m,d)\longrightarrow\calC(S,T,m,d).
\]
For each output, it computes the abstract acting group
\[
A=\langle\widetilde O(S),f_S\rangle<O(S).
\]
\item Suppose $|\calL(S,T,f_T)|\leq1$ for every relevant
conjugacy-class representative $f_T$. Then Lemma~\ref{lem:L-to-C}
identifies the outputs with their lattice-theoretic equivalence classes.
\item For a fixed connected symplectic family, an output of order $m$ and dimension $d$ is saturated whenever no output of order $m'>m$ and dimension $d$, with $m\mid m'$, occurs for that family. For a saturated output, the computed group $A$ is the automorphism group of a general member.
\item If the output is empty, then the connected symplectic family has no non-generic index in $I(S,T)$ and no corresponding larger automorphism group.
\end{enumerate}
\end{prop}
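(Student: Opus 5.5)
The plan is to treat the four parts in order, each as a bookkeeping consequence of results already established, and to reduce completeness in (1) to a correspondence between geometric families and lattice data.

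\emph{Part (1).} Start with a connected component carrying a symplectically saturated action of non-symplectic index $m\in I(S,T)$. Choose $f$ generating the non-symplectic quotient and acting on $H^{3,1}$ by $\zeta_m$. The discussion before Proposition~\ref{prop:construct-family} shows that $f_T$ has order $m$ and satisfies the signature condition. It also shows that $K$ contains no short or long roots, and $f$ lies in $\widetilde O(\Lambda_0)$ because it fixes $h^2$. Symplectic saturation gives $|\widetilde O(K)|=|\widetilde O(S)|$ by Proposition~\ref{prop:lattice_family_criteria}(1). Hence the embedding lies in $\calL(S,T,f_T)$ after conjugating $f_T$ to the chosen representative.

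The two skip rules in Algorithm~\ref{alg:lattice-enumeration2} must not discard such a datum, and this is the main obstacle.
\begin{enumerate}
\item If $\varphi(m)>\rank(T)$, then $P=\ker\Phi_m(f_T)$ would be zero, so this rule is harmless.
\item Suppose a proper divisor $e\in I(S,T)$ of $m$ has empty output. Consider the subgroup $A_e<A$ of index $m/e$ generated by $G$ and $f^{m/e}$; its non-symplectic index is $e$.
\end{enumerate}
In case (2), Lemma~\ref{lem:equal-dimensional-restriction} applies to the saturation of our family. It shows that the associated $A_e$-family is symplectically saturated and is represented by an order-$e$ output, which contradicts emptiness. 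This requires that the saturation exists with index divisible by $m$, and I expect to invoke Proposition~\ref{prop:saturation-dimension} together with Lemma~\ref{lem:equal-dimensional-restriction} for it.

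Lemma~\ref{lem:algorithm1} shows that Algorithm~\ref{alg:lattice-enumeration1} returns exactly $\calL(S,T,f_T)$, each entry with a stable fitting isometry. Lemma~\ref{lem:group-from-output} then shows that the computed $A_S(f)$ is the acting group. The remaining repetitions are exactly the non-injectivity of $\calL\to\calC$.

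\emph{Parts (2)--(4).} Part (2) is a direct citation of Lemma~\ref{lem:L-to-C}. For part (3), apply the contrapositive in Proposition~\ref{prop:saturation-dimension}: with no output of order $m'>m$, $m\mid m'$ and dimension $d$, the family is saturated, and Lemma~\ref{lem:group-from-output} identifies $A$ with the generic automorphism group. For part (4), use the completeness from (1) and the fact that $I(S,T)$ contains every index above the generic one. Any member with larger index would then produce an output, either directly or after passing to the symplectically saturated subfamily through that member. A larger automorphism group forces a larger index, since the symplectic part is fixed as $\widetilde O(S)$.
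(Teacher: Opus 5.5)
Your proposal is correct and follows the paper's proof. Both arguments establish completeness through the conditions of property $\clubsuit$ and Lemma~\ref{lem:algorithm1}, show that the two early exits discard nothing, and then cite results directly for parts (2)--(4).

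The only difference is in the divisor test. The paper applies the Yu--Zheng local containment argument directly at a general member of the symplectically saturated order-$m$ family. Your detour through the saturation and Lemma~\ref{lem:equal-dimensional-restriction} is not needed: that the restricted family is symplectically saturated appears only in that lemma's proof, by the same local argument. The detour also requires using only the first, algorithm-independent assertion of Proposition~\ref{prop:saturation-dimension}, to avoid circularity.
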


\begin{proof}
The cited bounds make $I(S,T)$ exhaustive. The finite-order isometry enumeration is exhaustive up to conjugacy. Lemma~\ref{lem:algorithm1} implements the construction and symplectic-saturatedness criteria, while Lemma~\ref{lem:group-from-output} identifies the group computed on $S$ with the acting group.

The two early exits do not change the output. The inequality $\varphi(m)>\rank(T)$ rules out a primitive $m$-eigenspace in $T$. For the divisor test, suppose that an order-$m$ symplectically saturated family exists and that $e\mid m$. Let $X$ be a general member. The inverse image $A_e$ of the unique subgroup of order $e$ in the non-symplectic quotient defines an order-$e$ family containing $X$.

The local containment result used in the proof of \cite[Proposition~2.5]{yu2020moduli} shows that the automorphism group of a sufficiently general nearby member is conjugate to a subgroup of $\Aut(X)$. This containment respects the character on $H^{3,1}$. Every member still carries the action of the symplectic group, whereas $\Aut^s(X)$ is exactly that group. Hence the restricted family is generically symplectically saturated, and its induced lattice action must occur among the exhaustive order-$e$ outputs. Therefore an empty order-$e$ output implies an empty order-$m$ output whenever $e$ also belongs to the candidate list.

Part (2) is Lemma~\ref{lem:L-to-C}. Part (3) follows from Proposition~\ref{prop:saturation-dimension} and Lemma~\ref{lem:group-from-output}. Part (4) follows from the exhaustiveness of $I(S,T)$.
\end{proof}

\begin{rmk}\label{rmk:computational-range}
The computation starts from the classified pair $(S,T)$. The lattices $K$ and $P$ depend on the chosen isometry $f_T$ and are computed only after an equivariant extension is found. This avoids a separate enumeration of possible pairs $(K,P)$.

When the symplectic group is smaller, $\rank(S)$ decreases and $\rank(T)$ increases. Enumerating the finite-order isometries of $T$ then becomes much more expensive. For this reason, the present calculation is restricted to $\rank(S)\geq15$.
\end{rmk}

The accompanying computational files are listed in \path{anc/README.md}. The OSCAR program and its input are in \path{anc/oscar/oscar_script.jl} and \path{anc/oscar/input.jl}. The ordered search is run by \path{anc/oscar/oscar_run_search.jl}. The complete retained objects are stored in \path{anc/oscar/oscar_script_data.mrdi} and summarized in \path{anc/oscar/oscar_result.md}.

The Gram matrices of $S$ are taken from \cite{hohn2016290}, and the lattices $T$ are reconstructed from their genus data. One can check that the stored representatives have the required signatures and discriminant forms and occur in the required primitive extensions to $\Lambda_0$. For every extension that passes the root and symplectic-saturatedness tests, the program constructs a stable fitting isometry $f_{\Lambda_0}$. It then computes
\[
\langle\widetilde O(S),f_{\Lambda_0}|_S\rangle
\]
and identifies this finite group in GAP \cite{GAP4}. For groups in the SmallGroups library, the saved result records the GAP ID. For larger groups, it records the order and, when available, the value of \texttt{StructureDescription}.

\begin{rmk}\label{rmk:oscar6071}
The computations use OSCAR v1.7.3 with a local correction for the one-unmarked-lattice branch of \texttt{equivariant\_primitive\_extensions}. GitHub issue \#6071 records a false negative in this branch of OSCAR v1.7.3. The correction appears in pull request \#6004, which was merged into the master branch on 15 May 2026. The accompanying program retains the corrected wrapper rather than calling the affected branch directly. See \href{https://github.com/oscar-system/Oscar.jl/issues/6071}{OSCAR issue \#6071}, \href{https://github.com/oscar-system/Oscar.jl/pull/6004}{pull request \#6004}, and \path{anc/oscar/oscar_script.md}.
\end{rmk}

\section{Results and Main Table}\label{sec:results}

\subsection{Computational results}

We ran the algorithm on the $29$ lattice inputs from the Laza--Zheng classification with $15\leq\rank(S)\leq19$ and $I(S,T)\neq\varnothing$. The $S_{3,3}$ input corresponds to two connected symplectic families, so these inputs cover $30$ families. The remaining two families in this rank range, the generic-index-$2$ family for $A_6$ and the family for $T_{48}$, have no non-generic candidate index and require no search.

The two auxiliary equivalence computations used below are reproduced by \path{anc/oscar/oscar_auxiliary_equivalence_check.jl}. Their complete extension objects and comparison data are stored in \path{anc/oscar/oscar_auxiliary_equivalence_data.mrdi}.

For a fixed lattice pair $(S,T)$, Proposition~\ref{prop:ST-families} gives one connected symplectic family, except when $G\cong M_{10}$ or $G\cong S_{3,3}$. We denote a family by $(G,m_{\mathrm{generic}})$ and add a superscript when two connected families have the same lattice pair.

\subsubsection{Rank $19$}

The rank-$19$ cases were classified in \cite{FWZ26}; see also \cite{he2025cubicfourfoldsorder7automorphism}. In rank $19$, the algorithm gives a nonempty output only for $L_2(7)$ and $M_9$.

For $(L_2(7),1)$, there are exactly two outputs. Both have $m=2$, $d=0$, and $|\calL(S,T,f_T)|=1$. They represent distinct lattice-theoretic classes, each with acting group $L_2(7):C_2$. This agrees with the two cases in \cite[Theorem~1.2]{he2025cubicfourfoldsorder7automorphism}.

For $(M_9,1)$, there are two primitive-extension outputs. They satisfy $m=3$, $d=0$, and $|\calL(S,T,f_T)|=2$. Their lattices $K$ are isometric, and their pairs $(P,f_P)$ are also isometric. Repeating the primitive-extension computation for this fixed pair gives exactly one root-free class admitting a stable fitting isometry. Thus the two outputs represent a single lattice-theoretic class. Its computed automorphism group is $M_9:C_3$, in agreement with \cite{FWZ26}. The files cited above record the auxiliary computation.

\subsubsection{The two $S_{3,3}$ families}

The same lattice pair $(S,T)$ gives two connected symplectic families of generic index $2$. Their generic automorphism groups are $S_{3,3}\times C_2$ and $N_{72}$; see \cite{KOIKE202512,KOIKE2026}.

The algorithm gives two outputs with $m=6$ and $d=1$. Their GAP IDs are $(216,170)$ and $(216,157)$, respectively. The corresponding groups are
\[
S_{3,3}\times C_6
\qquad\text{and}\qquad
N_{72}\times C_3.
\]
The group $S_{3,3}\times C_2$ is not isomorphic to a subgroup of $N_{72}\times C_3$, and $N_{72}$ is not isomorphic to a subgroup of $S_{3,3}\times C_6$. Hence the output with GAP ID $(216,157)$ lies over the $N_{72}$ family, while the output with GAP ID $(216,170)$ lies over the other family. The two outputs are therefore distinguished by their automorphism groups. Both are saturated by Proposition~\ref{prop:saturation-dimension}.

\subsubsection{The family $(A_{3,3},2)$}

There are two outputs with $m=6$, $d=2$, and $|\calL(S,T,6,2)|=2$. They have isometric lattices $K$ and the same isometry class of $(P,f_P)$. The auxiliary extension computation gives two classes. The traces of the two induced actions on $S$ are $-4$ and $-1$, so the outputs cannot be conjugate. Lemma~\ref{lem:auxiliary-KP-test} therefore shows that they represent two distinct lattice-theoretic classes. The files cited above record this computation.

Lemma~\ref{lem:group-from-output}, together with the order check in Remark~\ref{rmk:group-order-on-S}, identifies their abstract groups from the induced actions on $S$. Proposition~\ref{prop:saturation-dimension} shows that both outputs are saturated, so these are the generic automorphism groups on every corresponding component.

\subsubsection{The remaining cases with
$15\leq\rank(S)\leq18$}

For each fixed connected symplectic family, we consider only the pairs $(m,d)$ for which the output is nonempty.

In every remaining case with $15\leq\rank(S)\leq18$, each relevant conjugacy-class representative satisfies $|\calL(S,T,f_T)|=1$. By Lemma~\ref{lem:L-to-C}, the outputs therefore represent distinct lattice-theoretic equivalence classes. The candidate list is exhaustive by \cite[Propositions~4.1 and 5.1]{FWZ26}. Proposition~\ref{prop:main-procedure} then determines the abstract groups and their indices.

Except for the family with $G\cong F_{21}$, no two output pairs $(m,d)$ and $(m',d)$ satisfy $m\mid m'$ with $m<m'$.

By Proposition~\ref{prop:saturation-dimension}, an output is saturated unless a larger divisible order occurs in the same dimension. Within a fixed connected symplectic family, this happens only for the zero-dimensional outputs of orders $3$ and $6$ for $F_{21}$. Lemma~\ref{lem:equal-dimensional-restriction} resolves this case.

For $G\cong F_{21}$, the three outputs have $(m,d)=(2,1),(3,0),(6,0)$. The order-$2$ and order-$6$ outputs are saturated by Proposition~\ref{prop:saturation-dimension}, so their computed groups are the generic automorphism groups on every corresponding component.

It remains to consider the order-$3$ output. Restrict the saturated order-$6$ action to the inverse image of the unique subgroup $\mu_3\subset\mu_6$. This gives an order-$3$ family containing the order-$6$ family. By Lemma~\ref{lem:L-to-C}, the order-$3$ output is unique, and both families have dimension $0$. Lemma~\ref{lem:equal-dimensional-restriction} therefore shows that they have the same image in $\calM$. Thus the order-$3$ family is not saturated; its saturation is the order-$6$ family.

Thus the order-$3$ action does not give a new automorphism group. The automorphism groups for the $F_{21}$ symplectic family are the generic group and the two saturated groups of indices $2$ and $6$, as recorded in Table~\ref{table: main}.

The same procedure identifies the group for every remaining saturated output. The resulting groups and indices are recorded in Table~\ref{table: main}.

\subsection{Proof of the main theorem}

\begin{proof}[Proof of Theorem~\ref{thm:main}]
For $\rank(S)=20$, the connected families are points. Their automorphism groups are given by \cite[Theorem~1.8]{laza2022automorphisms}; see also \cite{KOIKE202512,KOIKE2026}.

For $\rank(S)=19$, the classification follows from \cite{FWZ26} and is reproduced by the computations above. Now assume $15\leq\rank(S)\leq18$. The candidate indices are exhaustive by \cite[Propositions~4.1 and 5.1]{FWZ26}. Proposition~\ref{prop:main-procedure} and Lemmas~\ref{lem:L-to-C} and \ref{lem:auxiliary-KP-test} enumerate and identify all lattice-theoretic data arising from symplectically saturated actions. Lemma~\ref{lem:group-from-output} determines the abstract group attached to each output.

Proposition~\ref{prop:saturation-dimension} and Lemma~\ref{lem:equal-dimensional-restriction} determine which outputs are saturated. For a saturated output, the group attached to it is the automorphism group of a general member. Conversely, Proposition~\ref{prop:construct-family} realizes each retained root-free lattice action on smooth cubic fourfolds. Hence the preceding subsections give exactly the possible groups and indices recorded in Table~\ref{table: main}. For a real character, the two type IV components have the same group, index, dimension, and saturation behavior by Proposition~\ref{prop:lattice-equivalence}.
\end{proof}

There are two further lower-rank families for which the full
automorphism groups are known. These cases are not part of
Theorem~\ref{thm:main}. For the second symplectic family with
$\Aut^s(X)\cong S_3$, whose generic non-symplectic index is $2$, the
possible indices are $m\in\{2,4,6,8,12,24\}$ by
\cite[Remark~4.10 and Theorem~1.1]{FWZ26}. To determine the full group,
put $H=\Aut(X)$ and $N=\Aut^s(X)\cong S_3$. Since $Z(N)=1$ and
$\operatorname{Out}(N)=1$, conjugation gives
\[
H=N C_H(N),\qquad N\cap C_H(N)=1.
\]
The centralizer $C_H(N)$ maps isomorphically to
$H/N\cong C_m$. Hence $\Aut(X)\cong S_3\times C_m$.

If $\Aut^s(X)=1$, the character on $H^{3,1}(X)$ is faithful, so
$\Aut(X)\cong C_m$. By \cite[Proposition~7.2]{FWZ26}, the possible
values of $m$ are precisely the positive integers dividing $32$ or
$48$, and every such value occurs. Together with
Theorem~\ref{thm:main}, these two cases give the complete list of
abstract automorphism groups and non-symplectic indices for $42$ of
the $48$ connected symplectic families.

\subsection{Main table}

Table~\ref{table: main} combines the classification in
Theorem~\ref{thm:main} with the lower-rank data from \cite{FWZ26}. The
part with $\rank(S)\geq15$ gives the classification proved in this
paper. The final eight rows correspond to the connected symplectic
families with $\rank(S)<15$. The second $S_3$ row and the row with
trivial symplectic automorphism group give the complete group-and-index
lists discussed above. The other six rows record only the generic
indices and the YYZ bounds. None of the eight lower-rank rows is part
of Theorem~\ref{thm:main}.

There are $40$ connected symplectic families with $\rank(S)\geq15$. Their generic full groups give $40$ rows in the table. The search produces $38$ further raw records. The two records for $M_9$ represent one lattice-theoretic class, and the order-$3$ record for $F_{21}$ has the order-$6$ family as its saturation. Thus there are $36$ further saturated lattice-theoretic records, and the part with $\rank(S)\geq15$ contains $76$ rows. We do not interpret this number as a count of equivalence classes of connected families with group actions. The eight lower-rank rows are not included in this count.

The column ``YYZ bounds'' gives the Yang--Yu--Zhu divisibility bounds recorded in \cite{FWZ26}. The column ``generic index'' gives the generic non-symplectic index in the connected symplectic family.

Each row in the part with $\rank(S)\geq15$ records the group-and-index data associated with a saturated lattice-theoretic class. Its generic symplectic automorphism group is isomorphic to the group in the column ``$\Aut^s(X)$'', and its generic automorphism group is isomorphic to the group in the column ``$\Aut(X)$''. In the real-character cases, one row may correspond to two complex-conjugate connected components, which are not distinguished here. Repeated rows with the same displayed numerical and group-theoretic data represent distinct lattice-theoretic classes, as distinguished by their embedded lattice data or by the preceding discussion.

In the part with $\rank(S)\geq15$, the column ``index'' gives the non-symplectic index recorded by the lattice-theoretic class, and the column ``$\dim\calF$'' gives the common dimension of its associated component or components. The column ``GAP ID of $\Aut(X)$'' gives the GAP ID of $\Aut(X)$ when its order is at most $2000$.

In the two lower-rank rows discussed above, the merged entry spans the
columns ``index'', ``$\Aut(X)$'', and ``GAP ID of $\Aut(X)$'' and gives
the complete group-and-index list. In the other six lower-rank rows, a
blank entry means that the corresponding datum is not classified or
recorded here; it does not mean that no such case exists.

{\footnotesize
\setlength{\tabcolsep}{2pt}
\begin{longtable}{|c|c|c|c|c|ccc|c|}
\hline
$\rank(S)$    & $G=\Aut^s$  & $\ord(G)$   & $ \begin{matrix}\mathrm{YYZ}\\ \mathrm{bounds} \end{matrix}$ & $ \begin{matrix}\mathrm{generic}\\ \mathrm{index} \end{matrix}$ & \multicolumn{1}{c|}{index} & \multicolumn{1}{c|}{$\Aut(X)$} & GAP ID of $\Aut(X)$   & $\dim\calF$ \\ \hline
$20$& $C_3^4:A_6$ & $29160$     & $6$    & $6$ & \multicolumn{1}{c|}{$6$}   & \multicolumn{1}{c|}{$C_3^5:S_6$}     &     & $0$   \\ \hline
\multirow[t]{2}{*}{$20$} & \multirow[t]{2}{*}{$A_7$}  & \multirow[t]{2}{*}{$2520$} & \multirow[t]{2}{*}{$2$}     & $1$ & \multicolumn{1}{c|}{$1$}   & \multicolumn{1}{c|}{$A_7$}     &     & $0$   \\ \cline{5-9} 
    & & &  & $2$ & \multicolumn{1}{c|}{$2$}   & \multicolumn{1}{c|}{$S_7$}     &     & $0$   \\ \hline
$20$& $3^{1+4}:C_2.C_2^2$     & $1944$& $4$    & $4$ & \multicolumn{1}{c|}{$4$}   & \multicolumn{1}{c|}{$3^{1+4}:(C_4^2:C_2)$} &     & $0$   \\ \hline
\multirow[t]{2}{*}{$20$} & \multirow[t]{2}{*}{$M_{10}$}     & \multirow[t]{2}{*}{$720$}  & \multirow[t]{2}{*}{$1$}     & $1$ & \multicolumn{1}{c|}{$1$}   & \multicolumn{1}{c|}{$M_{10}$}  & (720,765) & $0$   \\ \cline{5-9} 
    & & &  & $1$ & \multicolumn{1}{c|}{$1$}   & \multicolumn{1}{c|}{$M_{10}$}  & (720,765) & $0$   \\ \hline
$20$& $L_{2}(11)$ & $660$ & $3$    & $3$ & \multicolumn{1}{c|}{$3$}   & \multicolumn{1}{c|}{$L_2(11)\times C_3$}   & (1980,57) & $0$   \\ \hline
$20$& $A_{3,5}$   & $360$ & $6$    & $6$ & \multicolumn{1}{c|}{$6$}   & \multicolumn{1}{c|}{$S_{3,5}\times C_3$}   &     & $0$   \\ \hline
$19$& $3^{1+4}:C_2.C_2$ & $972$ & $6$    & $2$ & \multicolumn{1}{c|}{$2$}   & \multicolumn{1}{c|}{$3^{1+4}:D_8$}   & (1944,3536) & $1$   \\ \hline
\multirow[t]{2}{*}{$19$} & \multirow[t]{2}{*}{$A_{6}$}& \multirow[t]{2}{*}{$360$}  & \multirow[t]{2}{*}{$2$}     & $1$ & \multicolumn{1}{c|}{$1$}   & \multicolumn{1}{c|}{$A_6$}     & (360,118) & $1$   \\ \cline{5-9} 
    & & &  & $2$ & \multicolumn{1}{c|}{$2$}   & \multicolumn{1}{c|}{$S_6$}     & (720,763) & $1$   \\ \hline
\multirow[t]{3}{*}{$19$} & \multirow[t]{3}{*}{$L_{2}(7)$}   & \multirow[t]{3}{*}{$168$}  & \multirow[t]{3}{*}{$2$}     & \multirow[t]{3}{*}{$1$}  & \multicolumn{1}{c|}{$1$}   & \multicolumn{1}{c|}{$L_2(7)$}  & (168,42)  & $1$   \\ \cline{6-9} 
    & & &  &     & \multicolumn{1}{c|}{$2$}   & \multicolumn{1}{c|}{$L_2(7):C_2$}    & (336,208) & $0$   \\ \cline{6-9} 
    & & &  &     & \multicolumn{1}{c|}{$2$}   & \multicolumn{1}{c|}{$L_2(7):C_2$}    & (336,208) & $0$   \\ \hline
\multirow[t]{2}{*}{$19$} & \multirow[t]{2}{*}{$S_{5}$}& \multirow[t]{2}{*}{$120$}  & \multirow[t]{2}{*}{$6$}     & $1$ & \multicolumn{1}{c|}{$1$}   & \multicolumn{1}{c|}{$S_5$}     & (120,34)  & $1$   \\ \cline{5-9} 
    & & &  & $2$ & \multicolumn{1}{c|}{$2$}   & \multicolumn{1}{c|}{$S_5\times C_2$} & (240,189) & $1$   \\ \hline
\multirow[t]{2}{*}{$19$} & \multirow[t]{2}{*}{$M_{9}$}& \multirow[t]{2}{*}{$72$}   & \multirow[t]{2}{*}{$3$}     & \multirow[t]{2}{*}{$1$}  & \multicolumn{1}{c|}{$1$}   & \multicolumn{1}{c|}{$M_9$}     & (72,41)   & $1$   \\ \cline{6-9} 
    & & &  &     & \multicolumn{1}{c|}{$3$}   & \multicolumn{1}{c|}{$M_9:C_3$} & (216,153) & $0$   \\ \hline
$19$& $N_{72}$    & $72$  & $6$    & $2$ & \multicolumn{1}{c|}{$2$}   & \multicolumn{1}{c|}{$N_{72}\times C_2$}    & (144,186) & $1$   \\ \hline
$19$& $T_{48}$    & $48$  & $1$    & $1$ & \multicolumn{1}{c|}{$1$}   & \multicolumn{1}{c|}{$T_{48}$}  &   (48,29)   & $1$   \\ \hline
\multirow[t]{4}{*}{$18$} & \multirow[t]{4}{*}{$3^{1+4}:C_2$}& \multirow[t]{4}{*}{$486$}  & \multirow[t]{4}{*}{$12$}    & \multirow[t]{4}{*}{$2$}  & \multicolumn{1}{c|}{$2$}   & \multicolumn{1}{c|}{$3^{1+4}:C_2^2$} & (972,812) & $2$   \\ \cline{6-9} 
    & & &  &     & \multicolumn{1}{c|}{$4$}   & \multicolumn{1}{c|}{$3^{1+4}:(C_4\times C_2)$}   & (1944,3493)     & $1$   \\ \cline{6-9} 
    & & &  &     & \multicolumn{1}{c|}{$6$}   & \multicolumn{2}{c|}{$((C_3\times(C_3^3:C_3)):C_3):(C_2\times C_2)$}& $1$   \\ \cline{6-9} 
    & & &  &     & \multicolumn{1}{c|}{$12$}  & \multicolumn{2}{c|}{$((C_3 \times ((C_3^3) : C_3)) : C_3) : (C_4 \times C_2)$} & $0$   \\ \hline
\multirow[t]{4}{*}{$18$} & \multirow[t]{4}{*}{$A_{4,3}$}    & \multirow[t]{4}{*}{$72$}   & \multirow[t]{4}{*}{$6$}     & \multirow[t]{2}{*}{$1$}  & \multicolumn{1}{c|}{$1$}   & \multicolumn{1}{c|}{$A_{4,3}$} & (72,43)   & $2$   \\ \cline{6-9} 
    & & &  &     & \multicolumn{1}{c|}{$2$}   & \multicolumn{1}{c|}{$A_{4,3}\times C_2$}   & (144,189) & $1$   \\ \cline{5-9} 
    & & &  & \multirow[t]{2}{*}{$2$}  & \multicolumn{1}{c|}{$2$}   & \multicolumn{1}{c|}{$S_{4,3}$} & (144,183) & $2$   \\ \cline{6-9} 
    & & &  &     & \multicolumn{1}{c|}{$6$}   & \multicolumn{1}{c|}{$S_{4,3}\times C_3$}   & (432,745) & $1$   \\ \hline
\multirow[t]{4}{*}{$18$} & \multirow[t]{4}{*}{$A_{5}$}& \multirow[t]{4}{*}{$60$}   & \multirow[t]{4}{*}{$6$}     & \multirow[t]{2}{*}{$1$}  & \multicolumn{1}{c|}{$1$}   & \multicolumn{1}{c|}{$A_5$}     & (60,5)    & $2$   \\ \cline{6-9} 
    & & &  &     & \multicolumn{1}{c|}{$3$}   & \multicolumn{1}{c|}{$A_5\times C_3\cong \GL(2,4)$}     & (180,19)  & $1$   \\ \cline{5-9} 
    & & &  & \multirow[t]{2}{*}{$2$}  & \multicolumn{1}{c|}{$2$}   & \multicolumn{1}{c|}{$S_5$}     & (120,34)  & $2$   \\ \cline{6-9} 
    & & &  &     & \multicolumn{1}{c|}{$6$}   & \multicolumn{1}{c|}{$S_5\times C_3$} & (360,119) & $1$   \\ \hline
\multirow[t]{2}{*}{$18$} & \multirow[t]{2}{*}{$C_3^{2}.C_4$}& \multirow[t]{2}{*}{$36$}   & \multirow[t]{2}{*}{$6$}     & $1$ & \multicolumn{1}{c|}{$1$}   & \multicolumn{1}{c|}{$C_3^{2}.C_4$}   & (36,9)    & $2$   \\ \cline{5-9} 
    & & &  & $2$ & \multicolumn{1}{c|}{$2$}   & \multicolumn{1}{c|}{$N_{72}$}  & (72,40)   & $2$   \\ \hline
\multirow[t]{4}{*}{$18$} & \multirow[t]{4}{*}{$S_{3,3}$}    & \multirow[t]{4}{*}{$36$}   & \multirow[t]{4}{*}{$6$}     & \multirow[t]{2}{*}{$2$}  & \multicolumn{1}{c|}{$2$}   & \multicolumn{1}{c|}{$S_{3,3}\times C_2$}   & (72,46)   & $2$   \\ \cline{6-9} 
    & & &  &     & \multicolumn{1}{c|}{$6$}   & \multicolumn{1}{c|}{$S_{3,3}\times C_6$}   & (216,170) & $1$   \\ \cline{5-9} 
    & & &  & \multirow[t]{2}{*}{$2$}  & \multicolumn{1}{c|}{$2$}   & \multicolumn{1}{c|}{$N_{72}$}  & (72,40)   & $2$   \\ \cline{6-9} 
    & & &  &     & \multicolumn{1}{c|}{$6$}   & \multicolumn{1}{c|}{$N_{72}\times C_3$}    & (216,157) & $1$   \\ \hline
\multirow[t]{3}{*}{$18$} & \multirow[t]{3}{*}{$F_{21}$}     & \multirow[t]{3}{*}{$21$}   & \multirow[t]{3}{*}{$6$}     & \multirow[t]{3}{*}{$1$}  & \multicolumn{1}{c|}{$1$}   & \multicolumn{1}{c|}{$F_{21}$}  & (21,1)    & $2$   \\ \cline{6-9} 
    & & &  &     & \multicolumn{1}{c|}{$2$}   & \multicolumn{1}{c|}{$C_7:C_6$} & (42,1)    & $1$   \\ \cline{6-9} 
    & & &  &     & \multicolumn{1}{c|}{$6$}   & \multicolumn{1}{c|}{$(C_7:C_6)\times C_3$} & (126,7)   & $0$   \\ \hline
$18$& $\mathrm{Hol}(5)$ & $20$  & $6$    & $1$ & \multicolumn{1}{c|}{$1$}   & \multicolumn{1}{c|}{$\mathrm{Hol}(5)$}     & (20,3)    & $2$   \\ \hline
\multirow[t]{2}{*}{$18$} & \multirow[t]{2}{*}{$\mathrm{QD}_{16}$} & \multirow[t]{2}{*}{$16$}   & \multirow[t]{2}{*}{$2$}     & \multirow[t]{2}{*}{$1$}  & \multicolumn{1}{c|}{$1$}   & \multicolumn{1}{c|}{$\mathrm{QD}_{16}$}    & (16,8)    & $2$   \\ \cline{6-9} 
    & & &  &     & \multicolumn{1}{c|}{$2$}   & \multicolumn{1}{c|}{$(C_8\times C_2): C_2$}& (32,42)   & $1$   \\ \hline
\multirow[t]{3}{*}{$17$} & \multirow[t]{3}{*}{$S_{4}$}& \multirow[t]{3}{*}{$24$}   & \multirow[t]{3}{*}{$6$}     & \multirow[t]{2}{*}{$1$}  & \multicolumn{1}{c|}{$1$}   & \multicolumn{1}{c|}{$S_4$}     & (24,12)   & $3$   \\ \cline{6-9} 
    & & &  &     & \multicolumn{1}{c|}{$2$}   & \multicolumn{1}{c|}{$S_4\times C_2$} & (48,48)   & $2$   \\ \cline{5-9} 
    & & &  & $2$ & \multicolumn{1}{c|}{$2$}   & \multicolumn{1}{c|}{$S_4\times C_2$} & (48,48)   & $3$   \\ \hline
\multirow[t]{4}{*}{$17$} & \multirow[t]{4}{*}{$Q_{8}$}& \multirow[t]{4}{*}{$8$}    & \multirow[t]{4}{*}{$3$,$4$} & \multirow[t]{4}{*}{$1$}  & \multicolumn{1}{c|}{$1$}   & \multicolumn{1}{c|}{$Q_8$}     & (8,4)     & $3$   \\ \cline{6-9} 
    & & &  &     & \multicolumn{1}{c|}{$2$}   & \multicolumn{1}{c|}{$(C_4\times C_2):C_2$} & (16,13)   & $2$   \\ \cline{6-9} 
    & & &  &     & \multicolumn{1}{c|}{$3$}   & \multicolumn{1}{c|}{$\SL(2,3)$}& (24,3)    & $1$   \\ \cline{6-9} 
    & & &  &     & \multicolumn{1}{c|}{$4$}   & \multicolumn{1}{c|}{$C_4^2: C_2$}    & (32,11)   & $1$   \\ \hline
\multirow[t]{6}{*}{$16$} & \multirow[t]{6}{*}{$A_{3,3}$}    & \multirow[t]{6}{*}{$18$}   & \multirow[t]{6}{*}{$12$}    & \multirow[t]{3}{*}{$1$}  & \multicolumn{1}{c|}{$1$}   & \multicolumn{1}{c|}{$A_{3,3}$} & (18,4)    & $4$   \\ \cline{6-9} 
    & & &  &     & \multicolumn{1}{c|}{$2$}   & \multicolumn{1}{c|}{$A_{3,3}\times C_2$}   & (36,13)   & $2$   \\ \cline{6-9} 
    & & &  &     & \multicolumn{1}{c|}{$3$}   & \multicolumn{1}{c|}{$C_3^2:C_6$}     & (54,5)    & $1$   \\ \cline{5-9} 
    & & &  & \multirow[t]{3}{*}{$2$}  & \multicolumn{1}{c|}{$2$}   & \multicolumn{1}{c|}{$S_{3,3}$}   & (36,10)   & $4$   \\ \cline{6-9} 
    & & &  &     & \multicolumn{1}{c|}{$6$}   & \multicolumn{1}{c|}{$S_{3,3}\times C_3$}   & (108,38)  & $2$   \\ \cline{6-9} 
    & & &  &     & \multicolumn{1}{c|}{$6$}   & \multicolumn{1}{c|}{$S_{3,3}\times C_3$}   & (108,38)  & $2$   \\ \hline
\multirow[t]{7}{*}{$16$} & \multirow[t]{7}{*}{$D_{12}$}     & \multirow[t]{7}{*}{$12$}   & \multirow[t]{7}{*}{$12$}    & \multirow[t]{5}{*}{$1$}  & \multicolumn{1}{c|}{$1$}   & \multicolumn{1}{c|}{$D_{12}$}  & (12,4)    & $4$   \\ \cline{6-9} 
    & & &  &     & \multicolumn{1}{c|}{$2$}   & \multicolumn{1}{c|}{$D_{12}\times C_2$}    & (24,14)   & $2$   \\ \cline{6-9} 
    & & &  &     & \multicolumn{1}{c|}{$2$}   & \multicolumn{1}{c|}{$(C_6\times C_2):C_2$} & (24,8)    & $2$   \\ \cline{6-9} 
    & & &  &     & \multicolumn{1}{c|}{$3$}   & \multicolumn{1}{c|}{$D_{12}\times C_3$}    & (36,12)   & $2$   \\ \cline{6-9} 
    & & &  &     & \multicolumn{1}{c|}{$6$}   & \multicolumn{1}{c|}{$((C_6\times C_2):C_2)\times C_3$}   & (72,30)   & $1$   \\ \cline{5-9} 
    & & &  & \multirow[t]{2}{*}{$2$}  & \multicolumn{1}{c|}{$2$}   & \multicolumn{1}{c|}{$D_{12}\times C_2$}    & (24,14)   & $4$   \\ \cline{6-9} 
    & & &  &     & \multicolumn{1}{c|}{$6$}   & \multicolumn{1}{c|}{$S_3\times C_6\times C_2$}   & (72,48)   & $2$   \\ \hline
\multirow[t]{5}{*}{$16$} & \multirow[t]{5}{*}{$A_{4}$}& \multirow[t]{5}{*}{$12$}   & \multirow[t]{5}{*}{$6$}     & \multirow[t]{3}{*}{$1$}  & \multicolumn{1}{c|}{$1$}   & \multicolumn{1}{c|}{$A_4$}     & (12,3)    & $4$   \\ \cline{6-9} 
    & & &  &     & \multicolumn{1}{c|}{$2$}   & \multicolumn{1}{c|}{$A_4\times C_2$} & (24,13)   & $3$   \\ \cline{6-9} 
    & & &  &     & \multicolumn{1}{c|}{$3$}   & \multicolumn{1}{c|}{$A_4\times C_3$} & (36,11)   & $2$   \\ \cline{5-9} 
    & & &  & \multirow[t]{2}{*}{$2$}  & \multicolumn{1}{c|}{$2$}   & \multicolumn{1}{c|}{$S_4$}     & (24,12)   & $4$   \\ \cline{6-9} 
    & & &  &     & \multicolumn{1}{c|}{$6$}   & \multicolumn{1}{c|}{$S_4\times C_3$} & (72,42)   & $2$   \\ \hline
\multirow[t]{3}{*}{$16$} & \multirow[t]{3}{*}{$D_{10}$}     & \multirow[t]{3}{*}{$10$}   & \multirow[t]{3}{*}{$12$}    & \multirow[t]{3}{*}{$1$}  & \multicolumn{1}{c|}{$1$}   & \multicolumn{1}{c|}{$D_{10}$}  & (10,1)    & $4$   \\ \cline{6-9} 
    & & &  &     & \multicolumn{1}{c|}{$2$}   & \multicolumn{1}{c|}{$D_{20}$}  & (20,4)    & $2$   \\ \cline{6-9} 
    & & &  &     & \multicolumn{1}{c|}{$3$}   & \multicolumn{1}{c|}{$D_{10}\times C_3$}    & (30,2)    & $2$   \\ \hline
\multirow[t]{4}{*}{$15$} & \multirow[t]{4}{*}{$D_{8}$}& \multirow[t]{4}{*}{$8$}    & \multirow[t]{4}{*}{$4$,$6$} & \multirow[t]{4}{*}{$1$}  & \multicolumn{1}{c|}{$1$}   & \multicolumn{1}{c|}{$D_8$}     & (8,3)     & $5$   \\ \cline{6-9} 
    & & &  &     & \multicolumn{1}{c|}{$2$}   & \multicolumn{1}{c|}{$D_8\times C_2$} & (16,11)   & $4$   \\ \cline{6-9} 
    & & &  &     & \multicolumn{1}{c|}{$2$}   & \multicolumn{1}{c|}{$(C_4\times C_2):C_2$} & (16,13)   & $3$   \\ \cline{6-9} 
    & & &  &     & \multicolumn{1}{c|}{$2$}   & \multicolumn{1}{c|}{$D_{16}$}  & (16,7)    & $2$   \\ \hline
$14$& $C_4$ & $4$   & $8$,$12$     & $1$ & \multicolumn{1}{c|}{}& \multicolumn{1}{c|}{}    &     & \\ \hline
\multirow[t]{2}{*}{$14$} & \multirow[t]{2}{*}{$S_3$}  & \multirow[t]{2}{*}{$6$}    & \multirow[t]{2}{*}{$24$}    & $1$ & \multicolumn{1}{c|}{}& \multicolumn{1}{c|}{}    &     & \\ \cline{5-9}
    & & &  & $2$ & \multicolumn{3}{c|}{$\Aut(X)\cong S_3\times C_m,\quad 2\mid m\mid24$}   & \\ \hline
$12$& $C_2^{2}$   & $4$   & $12$   & $1$ & \multicolumn{1}{c|}{}& \multicolumn{1}{c|}{}    &     & \\ \hline
\multirow[t]{2}{*}{$12$} & \multirow[t]{2}{*}{$C_3$}  & \multirow[t]{2}{*}{$3$}    & \multirow[t]{2}{*}{$16$,$24$}     & $1$ & \multicolumn{1}{c|}{}& \multicolumn{1}{c|}{}    &     & \\ \cline{5-9}
    & & &  & $2$ & \multicolumn{1}{c|}{}& \multicolumn{1}{c|}{}    &     & \\ \hline
$8$ & $C_2$ & $2$   & $16$,$24$    & $1$ & \multicolumn{1}{c|}{}& \multicolumn{1}{c|}{}    &     & \\ \hline
$0$ & $C_1$ & $1$   & $32$,$48$    & $1$ & \multicolumn{3}{c|}{$\Aut(X)\cong C_m,\quad m\mid32\text{ or }m\mid48$}& \\ \hline
\caption{Group-and-index classification for $42$ connected symplectic families, together with bounds for the remaining six}
\label{table: main}
\end{longtable}
}

\bibliography{reference}
\end{document}